\newtheorem{theorem}{Theorem}[section]
\newtheorem{lemma}[theorem]{Lemma}
\newtheorem{proposition}[theorem]{Proposition}
\newtheorem{corollary}[theorem]{Corollary}
\newtheorem*{theorem*}{Theorem}
\theoremstyle{remark}
\newtheorem{remark}[theorem]{Remark}
\newtheorem{definition}[theorem]{Definition}
\newtheorem{example}[theorem]{Example}
\numberwithin{equation}{section}
\newcommand{\Z}{\mathbb{Z}}
\newcommand{\N}{\mathbb{N}}
\newcommand{\C}{\mathbb{C}}
\newcommand{\T}{\mathbb{T}}
\newcommand{\K}{\mathcal{K}}
\newcommand{\LC}{L_\mathbb{C}}
\newcommand{\reg}{\textnormal{reg}}
\newcommand{\ME}{\sim_{\textnormal{ME}}}
\newcommand{\SME}{\sim_\textnormal{{SME}}}
\newcommand{\coker}{\operatorname{coker}}
\newcommand{\algspan}{\operatorname{span}}
\newcommand{\clspan}{\operatorname{\overline{\textnormal{span}}}}
\newcommand{\im}{\operatorname{im }}
\newcommand{\Hom}{\operatorname{HOM}}
\newcommand{\Homalg}{\operatorname{HOM}_\textnormal{alg}}
\newcommand{\Iso}{\operatorname{ISO}}
\newcommand{\Isoalg}{\operatorname{ISO}_\textnormal{alg}}
\begin{document}
\title{Isomorphism and Morita Equivalence of Graph Algebras}

\author{Gene Abrams}

\author{Mark Tomforde}

\address{Department of Mathematics \\ University of Colorado \\ Colorado Springs, CO 80933 \\USA}
\email{abrams@math.uccs.edu}

\address{Department of Mathematics \\ University of Houston \\ Houston, TX 77204-3008 \\USA}
\email{tomforde@math.uh.edu}

\date{\today}

\subjclass[2000]{16D70, 46L55}

\keywords{graph, Leavitt path algebra, graph $C^*$-algebra, Morita equivalence}

\begin{abstract}

For any countable graph $E$, we investigate the relationship between the Leavitt path algebra $L_{\C}(E)$ and the graph $C^*$-algebra $C^*(E)$.  For graphs $E$ and $F$, we examine ring homomorphisms, ring $*$-homomorphisms, algebra homomorphisms, and algebra $*$-ho\-mo\-morph\-isms between $L_{\C}(E)$ and $L_{\C}(F)$. We prove that in certain situations isomorphisms between $L_{\C}(E)$ and $L_{\C}(F)$ yield $*$-isomorphisms between the corresponding $C^*$-algebras $C^*(E)$ and $C^*(F)$.  Conversely, we show that $*$-iso\-morph\-isms between $C^*(E)$ and $C^*(F)$ produce isomorphisms between $L_{\C}(E)$ and $L_{\C}(F)$ in specific cases.  The relationship between Leavitt path algebras and graph $C^*$-algebras is also explored in the context of Morita equivalence.

\end{abstract}

\maketitle

\tableofcontents

%%%%%%%%%%%%%%%%%%%%%%%%%%%%%%%%%%%%%%%%%%%%%%%%%%%%%%
\section{Introduction}
%%%%%%%%%%%%%%%%%%%%%%%%%%%%%%%%%%%%%%%%%%%%%%%%%%%%%%

For any directed graph $E$ one can define the graph $C^*$-algebra $C^*(E)$, which is generated by partial isometries satisfying relations determined by $E$.  These graph $C^*$-algebras include many well-known classes of $C^*$-algebras (e.g., Cuntz-Krieger algebras, stable AF-algebras, stable Kirchberg algebras, finite-dimensional $C^*$-algebras, $M_n(C(\T))$), and consequently they have been the focus of significant investigation in functional analysis since their introduction in the late 1990's \cite{KPR, KPRR}.    Similarly, for any directed graph $E$ and any field $K$ one can define the {\it Leavitt path algebra} $L_K(E)$.  Such $K$-algebras include many well-known classes of algebras and have been studied intensely in the algebra community since their introduction in 2005 \cite{AbrPino, AMP}.   The interplay between these two classes of ``graph algebras" has been extensive and mutually beneficial --- graph $C ^*$-algebra results have helped to guide the development of Leavitt path algebras by suggesting what results are true and in what direction investigations should be focused, and Leavitt path algebras have given a better understanding of graph $C^*$-algebras by helping to identify those aspects of $C^*(E)$ that are algebraic, rather than $C^*$-algebraic, in nature.

It has also been found that there are amazing similarities between the two classes of graph algebras.  In fact, every theorem from each class seems to have a corresponding theorem in the other.  At the same time, however, the similarities between various structural properties of $C^*(E)$ and $L_K(E)$ are as mysterious as they are startling.  For example, for nearly every graph-theoretic condition of $E$ that is known to be equivalent to a $C^*$-algebraic property of $C^*(E)$, the same graph-theoretic property of $E$ is equivalent to the corresponding property of $L_K(E)$.  For instance, the graph-theoretic conditions for which $C^*(E)$ is a simple algebra (respectively, an AF-algebra, a purely infinite simple algebra, an exchange ring, a finite-dimensional algebra) in the category of $C^*$-algebras are precisely the same graph-theoretic conditions for which $L_K(E)$ is a simple algebra (respectively, an ultramatricial algebra, a purely infinite simple algebra, an exchange ring, a finite-dimensional algebra) in the category of $K$-algebras.  Moreover, these Leavitt path algebras results  hold independent of the field $K$, and in particular for the field $K=\C$ of complex numbers.   These similarities might suggest that such structural properties, once obtained on either the graph $C^*$-algebra side or on the Leavitt path algebra side, might then immediately be translated via some sort of Rosetta stone  to the corresponding property on the other side.   Nonetheless, a vehicle to transfer information in this way remains elusive, and in fact, researchers seem uncertain how to even formulate conjectures that would lead to such a vehicle.

The purpose of this article is to initiate a study for translating properties of Leavitt path algebras to graph $C^*$-algebras.  We accomplish this by further examining the relationship between these classes and posing two conjectures.  We hope that these results will be useful in their own right, as well as help to lay the groundwork for future investigations.

Much of our focus will be on the Leavitt path algebra $L_\C(E)$, where the underlying field is the complex numbers $\C$.  This Leavitt path algebra has a natural $*$-algebra structure, and in fact it is isomorphic to a dense $*$-subalgebra of the graph $C^*$-algebra $C^*(E)$.  Whereas most of the existing literature has focused on the algebra structure of $L_K(E)$, we will examine $L_\C(E)$ as a $*$-algebra, an algebra, a $*$-ring, and a ring.  What we find is that the ring structure of $L_\C(E)$ emerges as important in determining the $C^*$-algebra structure of $C^*(E)$.  In fact, we make two conjectures in this regard; the Isomorphism Conjecture for Graph Algebras: \emph{If $E$ and $F$ are graphs, then $L_\C(E) \cong L_\C(F)$ (as rings) implies that $C^*(E) \cong C^*(F)$ (as $*$-algebras)}; and the Morita Equivalence Conjecture for Graph Algebras: \emph{If $E$ and $F$ are graphs, then $L_\C(E)$ is Morita equivalent to $L_\C(F)$ implies that $C^*(E)$ is strongly Morita equivalent to $C^*(F)$.}  We are able to verify these conjectures in two important special cases: (1) when the graphs have no cycles (or equivalently, when the $C^*$-algebras are AF and the algebras are ultramatricial); and (2) when the graphs are row-finite and the associated algebras are simple.

This paper is organized as follows.  We begin with some preliminaries in Section~\ref{preliminaries} to establish notation and basic facts.  In Section~\ref{densesubalgebra} we give a proof of the well-known result that $L_{\C}(E)$ is isomorphic to a dense $*$-subalgebra of $C^*(E)$.  Furthermore, we identify those situations in which $L_{\C}(E)$ is equal to $C^*(E)$.   In Section~\ref{Hom-iso-sec} we show that $*$-homomorphisms between Leavitt path algebras over $\C$ extend to homomorphisms between the associated graph $C^*$-algebras.  As a corollary, we obtain the fact that if $E$ and $F$ are graphs, then $L_\C(E) \cong L_\C(F)$ (as $*$-algebras) implies $C^*(E) \cong C^*(F)$ (as $*$-algebras).  We also examine which isomorphisms between graph $C^*$-algebras can be obtained in this way.  In Section~\ref{alg-and-*-alg-hom-sec} we examine algebra homomorphisms between Leavitt path algebras over $\C$ and show that these do not necessarily extend to homomorphisms between the associated graph $C^*$-algebras.  We obtain necessary and sufficient conditions for an algebra homomorphism between Leavitt path algebras over $\C$ to be an algebra $*$-homomorphism.  We also examine some phenomena that motivate our conjectures.  In Section~\ref{Iso-Prob-sec} we present the Isomorphism Conjecture for Graph Algebras.  In Section~\ref{ultramatricial-sec} we show that the Isomorphism Conjecture is true for graphs with no cycles, and in Section~\ref{simple-sec} we prove that the Isomorphism Conjecture is true whenever the graphs in question are row-finite and the associated algebras (equivalently, the associated $C^*$-algebras) are simple.  In Section~\ref{Mor-Equiv-sec} we state and investigate the Morita Equivalence Conjecture for Graph Algebras.   We conclude in Section~\ref{converses-sec} with some results which provide a converse for the Isomorphism Conjecture in certain special situations.

Part of the beauty of the current investigation is that tools from both the algebraic and analytic sides are brought to bear.  For example, along the way we will use such analytic gems as the Kirchberg-Phillips Classification Theorem and the Brown-Green-Rieffel Theorem; and such algebraic pearls as the Graded Uniqueness Theorem and the Stephenson Theorem on infinite matrix rings.

\bigskip

\noindent \textbf{Notation and Conventions:} In this paper we consider rings, algebras, $*$-algebras, and $C^*$-algebras.  Sometimes we will have objects that are simultaneously in more than one of these classes, and our viewpoint may switch from one class to another.  To make statements precise, the term \emph{ring homomorphism} will always mean a function that is additive and multiplicative; and the term \emph{algebra homomorphism} will mean  a ring homomorphism that is $K$-linear.  A \emph{ring $*$-homomorphism} (respectively, an \emph{algebra $*$-homomorphism}) will mean a ring homomorphism (respectively, an algebra homomorphism) that is $*$-preserving.  Likewise, for two objects $A$ and $B$ we write $A \cong B$ (as rings) to mean there is a ring isomorphism from $A$ to $B$, we write $A \cong B$ (as algebras) to mean there is an algebra isomorphism from $A$ to $B$, we write $A \cong B$ (as $*$-rings) to mean there is a ring $*$-isomorphism from $A$ to $B$, and we write $A \cong B$ (as $*$-algebras) to mean there is an algebra $*$-isomorphism from $A$ to $B$.

In addition, for a topological space $X$ that is locally compact and Hausdorff, we let $C(X)$ denote the $C^*$-algebra consisting of continuous complex-valued functions on $X$.  We also let $\K = \K (H)$ denote the $C^*$-algebra of compact operators on a separable infinite-dimensional Hilbert space $H$.  Given a set of elements $S$ in an algebra or $C^*$-algebra, we let $\algspan S$ denote the \emph{algebraic span} of $S$ consisting of all finite linear combinations of elements of $S$.  Given a set of elements $S$ in a $C^*$-algebra, we let $\clspan S$ denote the \emph{closed linear span} of $S$, which is equal to the closure of $\algspan S$.  We say that an algebra is \emph{ultramatricial} if it is the direct limit of a collection of finite-dimensional subalgebras.  In the literature, ultramatricial is sometimes also called locally matricial.  We say that a $C^*$-algebra is an \emph{AF-algebra} if it is the direct limit of a sequence of finite-dimensional $C^*$-algebras (or equivalently, if $A$ is the closure of the increasing union of a countable collection of finite-dimensional algebras).  The abbreviation ``AF" stands for ``Approximately Finite".

Since we hope that this paper will be of interest to both functional analysts and algebraists, we do our best to make the exposition clear and accessible to members from either group.  We give background and reference as much as we can, and we try to be explicit when we use well-known results from functional analysis or algebra that members of the other group may find unfamiliar.

\bigskip

\noindent \textbf{Acknowledgements:} The authors  thank Jutta Hausen and K. M. Rangaswamy for discussions which helped the authors to realize that Lemma~\ref{free-divisible-lem} would be useful in the proof of Theorem~\ref{K-theory-implications-thm}.  The authors  thank the University of Houston for sponsoring a visit of the first author, during which some of the preliminary work on this paper was conceived.    The authors are extremely grateful to the referee for a robust and helpful report, and for suggesting Example \ref{anotherexampleofnotalgebraic}.

\bigskip

%%%%%%%%%%%%%%%%%%%%%%%%%%%%%%%%%%%%%%%%%%%%%%%%%%%%%%
\section{Graph algebra preliminaries}\label{preliminaries}
%%%%%%%%%%%%%%%%%%%%%%%%%%%%%%%%%%%%%%%%%%%%%%%%%%%%%%

\begin{definition}

A \emph{$*$-ring} (also called an \emph{involutory ring}) is a ring $R$ together with an an involution $*$ satisfying
\begin{itemize}
\item[(1)] $(a^*)^* = a$ for all $a \in R$,
\item[(2)] $(a + b)^* = a^* + b^*$ for all $a,b \in R$,
\item[(3)] $(ab)^* = b^*a^*$ for all $a, b \in R$.
\end{itemize}
A $*$-algebra is an algebra $A$ over the complex numbers with an involution $*$ that is also antilinear; i.e., a ring involution that also satisfies
\begin{itemize}
\item[(4)] $(\lambda a)^* = \overline{\lambda} a^*$ for all $\lambda \in \C$ and $a \in A$.
\end{itemize}

\end{definition}

\begin{definition}
Suppose that $R$ is a $*$-ring (or a $*$-algebra) with involution $*$.  We
call an element $p \in R$ a \emph{projection} if $p=p^2=p^*$.  If
$p$ and $q$ are projections we say that $p$ and $q$ are
\emph{orthogonal} if $pq = 0$, and we say that $p \leq q$ if
$qp=p$.  We call an element $s \in R$  a \emph{partial isometry}
if $ss^*s=s$ and $s^*ss^*=s^*$.  (Note that in this case $ss^*$ and
$s^*s$ are projections.)  We say two partial isometries $s$ and
$t$ have \emph{orthogonal ranges} if $s^*t = 0$.
\end{definition}

\begin{definition}
A \emph{graph} $(E^0, E^1, r, s)$ consists of a countable set $E^0$ of vertices, a countable set $E^1$ of edges, and maps $r : E^1 \to E^0$ and $s : E^1 \to E^0$ identifying the range and source of each edge.  \end{definition}

\begin{remark} \label{countable-hypothesis-rem}
We require our graphs to be countable for two reasons: First, we wish to apply the Kirchberg-Phillips Classification Theorem to $C^*$-al\-ge\-bras associated to graphs.  In order for the hypothesis of separability to be satisfied, we need the graph to be countable so that the $C^*$-algebra has a countable approximate unit.  Second, we wish to apply Proposition~\ref{Mor-equiv-iso-mat-prop} to Leavitt path algebras of graphs, and we need the countability of the graph to ensure that the algebra has a countable set of {\it enough idempotents} (see Remark~\ref{enough-idempotents-rem} and Definition~\ref{enough-idempotents-def}).
\end{remark}

\begin{definition}
Let $E := (E^0, E^1, r, s)$ be a graph. We say that a vertex $v
\in E^0$ is a \emph{sink} if $s^{-1}(v) = \emptyset$, and we say
that a vertex $v \in E^0$ is an \emph{infinite emitter} if
$|s^{-1}(v)| = \infty$.  A \emph{singular vertex} is a vertex that
is either a sink or an infinite emitter, and we denote the set of
singular vertices by $E^0_\textnormal{sing}$.  We also let
$E^0_\textnormal{reg} := E^0 \setminus E^0_\textnormal{sing}$, and
refer to the elements of $E^0_\textnormal{reg}$ as \emph{regular
vertices}; i.e., a vertex $v \in E^0$ is a regular vertex if and
only if $0 < |s^{-1}(v)| < \infty$.  A graph is \emph{row-finite}
if it has no infinite emitters.  A graph is \emph{finite} if both
sets $E^0$ and $E^1$ are finite (or equivalently, when $E^0$ is
finite and $E$ is row-finite).
\end{definition}

\begin{definition}
If $E$ is a graph, a \emph{path} is a sequence $\alpha := e_1 e_2
\ldots e_n$ of edges with $r(e_i) = s(e_{i+1})$ for $1 \leq i \leq
n-1$.  We say the path $\alpha$ has \emph{length} $| \alpha| :=n$,
and we let $E^n$ denote the set of paths of length $n$.  We
consider the vertices in $E^0$ to be paths of length zero.  We
also let $E^* := \bigcup_{n=0}^\infty E^n$ denote the paths of
finite length in $E$, and we extend the maps $r$ and $s$ to $E^*$
as follows: For $\alpha = e_1 e_2 \ldots e_n \in E^n$ with $n\geq
1$, we set $r(\alpha) = r(e_n)$ and $s(\alpha) = s(e_1)$; for
$\alpha = v \in E^0$, we set $r(v) = v = s(v)$.  A \emph{cycle} is a path $\alpha=e_1 e_2 \ldots e_n$ with $r(\alpha) = s(\alpha)$ and $s(e_i)\neq s(e_j)$ for all $1\leq i\neq j \leq n$ .  If $\alpha = e_1e_2 \ldots e_n$ is a cycle, an \emph{exit} for $\alpha$ is an edge $f \in E^1$ such that $s(f) = s(e_i)$ and $f \neq e_i$ for some $i$.  We say that a graph satisfies Condition~(L) if every cycle has an exit.   Note that a graph with no cycles vacuously satisfies Condition~(L).  We denote by $E^\infty$ the set of infinite paths $\gamma= \gamma_1\gamma_2 \ldots$ of the graph $E$, and we say that $E$ is \emph{cofinal} if for every $v \in E^0$ and every $\gamma \in E^\infty$ there is a vertex $w$ on the path $\gamma$ such that there is a finite path from $v$ to $w$.
\end{definition}

\begin{definition} \label{graph-C*-def}
If $E$ is a graph, the \emph{graph $C^*$-algebra} $C^*(E)$ is the universal
$C^*$-algebra generated by mutually orthogonal projections $\{ p_v
: v \in E^0 \}$ and partial isometries with mutually orthogonal
ranges $\{ s_e : e \in E^1 \}$ satisfying
\begin{enumerate}
\item $s_e^* s_e = p_{r(e)}$ \quad  for all $e \in E^1$
\item $p_v = \sum_{\{ e \in E^1 : s(e) = v \}} s_es_e^* $ \quad for all $v \in E^0_\textnormal{reg}$
\item $s_es_e^* \leq p_{s(e)}$ \quad for all $e \in E^1$.
\end{enumerate}
(Universal in this definition means that if $A$ is any $C^*$-algebra containing a family of mutually orthogonal projections $\{ q_v : v \in E^0 \}$ and a family of partial isometries with mutually orthogonal ranges $\{ t_e : e \in E^1 \}$ satisfying Conditions (1)--(3) above, then there exists a unique algebra $*$-homomorphism $\phi : C^*(E) \to A$ with $\phi(p_v) = q_v$ for all $v \in E^0$ and $\phi(s_e) = t_e$ for all $e \in E^1$.)  We mention that when $E$ is row-finite, Condition (2) implies Condition (3).
\end{definition}

\begin{definition} We call Conditions (1)--(3) in Definition~\ref{graph-C*-def} the \emph{Cuntz-Krieger relations}.  For any $*$-ring $R$, a collection of mutually orthogonal
projections $\{P_v : v \in E^0 \}$ and partial isometries with
mutually orthogonal ranges $\{ S_e : e \in E^1\}$ in $R$ which
satisfy (1)--(3) is called a \emph{Cuntz-Krieger $E$-family}
in $R$ .
\end{definition}

\begin{definition} \label{LPA-lin-invo-def}
Let $E$ be a graph, and let $K$ be a field. We let $(E^1)^*$
denote the set of formal symbols $\{ e^* : e \in E^1 \}$, and for
$\alpha = e_1 \ldots e_n \in E^n$ we define $\alpha^* := e_n^*
e_{n-1}^* \ldots e_1^*$.  We also define $v^* = v$ for all $v \in
E^0$.  We call the elements of $E^1$ \emph{real edges} and the
elements of $(E^1)^*$ \emph{ghost edges}.  The \emph{Leavitt path
algebra of $E$ with coefficients in $K$}, denoted $L_K(E)$,  is
the free associative $K$-algebra generated by a set $\{v : v \in
E^0 \}$ of pairwise orthogonal idempotents, together with a set
$\{e, e^* : e \in E^1\}$ of elements, modulo the ideal generated
by the following relations:
\begin{enumerate}
\item $s(e)e = er(e) =e$ for all $e \in E^1$
\item $r(e)e^* = e^* s(e) = e^*$ for all $e \in E^1$
\item $e^*f = \delta_{e,f} \, r(e)$ for all $e, f \in E^1$
\item $v = \displaystyle \sum_{\{e \in E^1 : s(e) = v \}} ee^*$ whenever $v \in E^0_\reg$.
\end{enumerate}
For any Leavitt path algebra $L_K(E)$, there is a $K$-linear
involution $x \mapsto x^\wedge$ with $e^\wedge = e^*$ and
$v^\wedge = v$ for all $e \in E^1$, $v \in E^0$.  Hence for a general element we have
$\left( \sum \lambda_i \alpha_i \beta_i^* \right)^\wedge = \sum \lambda_i
\beta_i \alpha_i^*$.   In addition, $L_K(E)$ is $\Z$-graded, with the grading induced by requiring that $\alpha \beta^*$ is in the homogeneous component of degree $|\alpha| - |\beta|$.
\end{definition}

\begin{remark} \label{enough-idempotents-rem}
The Leavitt path algebra $L_K(E)$ is unital if and only if  $E^0$ is finite; in this case,  $1 = \sum_{v\in E^0}v$.   When $E^0$ is infinite then $L_K(E)$ contains a set of enough idempotents, consisting of finite sums of distinct elements of $E^0$.
\end{remark}

\begin{remark} \label{LPA-univ-rem}
Leavitt path algebras also have a universal property:  If $A$ is a $K$-algebra, and there is a set of elements $\{ a_v, b_e, c_e : v \in E^0, e \in E^1 \}$ satisfying
\begin{enumerate}
\item the $a_v$'s are pairwise orthogonal idempotents
\item $a_{s(e)}b_e = b_ea_{r(e)} =b_e$ for all $e \in E^1$
\item $a_{r(e)}c_e = c_e a_{s(e)} = c_e$ for all $e \in E^1$
\item $c_eb_f = \delta_{e,f} \, a_{r(e)}$ for all $e, f \in E^1$
\item $a_v = \displaystyle \sum_{\{e \in E^1 : s(e) = v \}} b_ec_e$ whenever $v \in E^0_\reg$,
\end{enumerate}
then there exists a unique algebra homomorphism $\phi : L_K(E) \to A$ satisfying $\phi (v) = a_v$ for all $v \in E^0$, $\phi(e) = b_e$ for all $e \in E^1$, and $\phi(e^*) = c_e$ for all $e \in E^1$.  We will call a collection $\{a_v, b_e, c_e: v \in E^0, e \in E^1 \}$ satisfying (1)--(5) above a \emph{Leavitt $E$-family}.
\end{remark}

Throughout the sequel we will be investigating the relationship
between various graph $C^*$-algebras and Leavitt path algebras. We will use the term \emph{graph algebra} to refer to either a graph $C^*$-algebra or a Leavitt path algebra.

Much of our analysis in this paper will involve Leavitt path algebras over the field $\C$ of complex numbers.  The Leavitt path algebra $\LC (E)$ is special in several regards.
First, in addition to the linear involution $x \mapsto
x^\wedge$ described in Definition~\ref{LPA-lin-invo-def}, there also exists a conjugate linear involution $x
\mapsto x^*$ given by $(\sum \lambda_i \alpha_i \beta_i^*)^* =
\sum \  \overline{\lambda_i} \ \beta_i \alpha_i^*$.  Note that
$v^* = v$ and $(e)^* = e^*$ for all $v \in E^0, e \in E^1$.  With
this involution, $\LC(E)$ is a complex $*$-algebra.  Furthermore, in addition to the universal property of $L_\C(E)$ in the category of algebras and algebra homomorphisms (described in Remark~\ref{LPA-univ-rem}),
$\LC(E)$ also has a universal property in the category of complex $*$-algebras:  If $A$ is a complex $*$-algebra and $\{a_v, b_e : v \in E^0, e \in E^1 \} \subseteq A$ is a set of elements satisfying
\begin{enumerate}
\item the $a_v$'s are pairwise orthogonal and $a_v=a_v^2=a_v^*$ for all $v \in E^0$
\item $a_{s(e)}b_e = b_ea_{r(e)} =b_e$ for all $e \in E^1$
\item $b_e^*b_f = \delta_{e,f} \, a_{r(e)}$ for all $e, f \in E^1$
\item $a_v = \displaystyle \sum_{\{e \in E^1 : s(e) = v \}} b_eb_e^*$ whenever $v \in E^0_\reg$,
\end{enumerate}
then there exists a unique algebra $*$-homomorphism $\phi : L_\C(E) \to A$ satisfying $\phi (v) = a_v$ for all $v \in E^0$ and $\phi(e) = b_e$ for all $e \in E^1$.

\begin{remark}
We see that for a given graph $E$, the $C^*$-algebra $C^*(E)$ is universal for Cuntz-Krieger $E$-families in the category of $C^*$-algebras and algebra $*$-homomorphisms, the $K$-algebra $L_K(E)$ is universal for Leavitt $E$-families in the category of $K$-algebras and $K$-algebra homomorphisms, and $L_\C(E)$ is universal for Leavitt $E$-families in the category of complex $*$-algebras and algebra $*$-homomorphisms.  (We note that in these categories, morphisms are not required to preserve identity elements.)
\end{remark}

\begin{definition} \label{g-i-def}
If $E$ is a graph and $\{p_v, s_e \}$ is a Cuntz-Krieger $E$-family generating $C^*(E)$, then for any $z \in \T$ (the complex numbers having norm 1) we see that $\{p_v, zs_e \}$ is a Cuntz-Krieger $E$-family in $C^*(E)$.  By the universal property of $C^*(E)$ there exists an algebra $*$-homomorphism $\gamma_z : C^*(E) \to C^*(E)$ with $\gamma_z(p_v) = p_v$ for all $v \in E^0$ and $\gamma_z(s_e) = zs_e$ for all $e \in E^1$.  Since $\gamma_{\overline{z}}$ is an inverse for $\gamma_z$ we have that $\gamma_z$ is a $*$-automorphism.  Thus we obtain an action $\gamma : \T \to \operatorname{Aut} C^*(E)$ given by $z \mapsto \gamma_z$.  We call $\gamma$ the \emph{gauge action} on $C^*(E)$ and for any $z \in \T$ we refer to $\gamma_z$ as the \emph{gauge $*$-automorphism determined by $z$}.

Likewise, if $\{p_v, s_e\}$ is a generating Leavitt $E$-family in $L_{\C}(E)$, then for any $z \in \T$ we may use the universal property of $L_\C(E)$ to obtain an algebra $*$-automorphism $\gamma_z : L_\C(E) \to L_\C(E)$ with $\gamma_z(v) = p_v$ for all $v \in E^0$ and $\gamma_z(e) = zs_e$ for all $e \in E^1$.

In analogy with the graph $C^*$-algebras, if $E$ is a graph and  $\{v, e, e^*\}$ is a Leavitt $E$-family generating $L_K(E)$, then for any $a \in K^\times$ (here $K^\times$ denotes the invertible elements in the field $K$) we see that $\{v, ae, a^{-1}e^* \}$ is a Leavitt $E$-family in $L_K(E)$.  By the universal property of $L_K(E)$ there exists a $K$-algebra homomorphism $\gamma_a : L_K(E) \to L_K(E)$ with $\gamma_a(v) = v$ for all $v \in E^0$ and $\gamma_a(e) = ae$ and $\gamma_a(e^*) = a^{-1}e^*$ for all $e \in E^1$.  Since $\gamma_{a^{-1}}$ is an inverse for $\gamma_a$ we have that $\gamma_a$ is an automorphism.  Thus we obtain an action $\gamma : K^\times \to \operatorname{Aut} L_K(E)$ given by $a \mapsto \gamma_a$.  We call $\gamma$ the \emph{scaling action} on $L_K(E)$ and for any $a \in K^\times$ we refer to $\gamma_a$ as the \emph{scaling automorphism determined by $a$}.

\end{definition}

We close this section by reminding the reader of two
fundamental structures.  We let $R_n$ denote the ``rose with $n$
petals" graph, namely, the graph with one vertex $v$ and edges
$e_1, \ldots ,e_n$ each beginning and ending at $v$.
\begin{example}
For any $n\geq 2$ and any field $K$, the \emph{Leavitt K-algebra of
order n}, denoted $L_K(1,n)$ or often simply by $L_n$, is the free
associative $K$-algebra in the $2n$ variables
$\{X_1,...,X_n,Y_1,...,Y_n\}$, modulo the relations $X_iY_j =
\delta_{i,j}1_K$ (for $1\leq i,j \leq n$) and $\sum_{i=1}^n Y_iX_i
= 1_K$ (see \cite{L}).   Equivalently, $L_K(1,n) =
L_K(R_n)$, under the correspondence $e_i \mapsto Y_i$ and
$e_i^* \mapsto X_i$ .

For any $n\geq 2$, the \emph{Cuntz algebra of order n}, denoted
$\mathcal{O}_n$, is the unital $C^*$-algebra generated by $n$ partial isometries $S_1, \ldots, S_n$ satisfying $1 = \sum_{i=1}^n S_iS_i^*$.  (This definition is independent of the choice of partial isometries; see \cite{C}.)  In addition, $\mathcal{O}_n = C^*(R_n)$, under the correspondence $s_{e_i} \mapsto S_i$.
\end{example}

%%%%%%%%%%%%%%%%%%%%%%%%%%%%%%%%%%%%%%%%%%%%%%%%%%%%%%
\section{Viewing $\LC(E)$ as a dense $*$-subalgebra of $C^*(E)$ }\label{densesubalgebra}
%%%%%%%%%%%%%%%%%%%%%%%%%%%%%%%%%%%%%%%%%%%%%%%%%%%%%%

Let $E = (E^0, E^1, r,s )$ be a graph.  Since the generators of
$\LC(E)$ satisfy the same relations as the generators of $C^*(E)$,
people will often nonchalantly say that $\LC(E)$ sits as a dense
$*$-subalgebra of $C^*(E)$.  However, this is not immediately
obvious and there are some subtleties to be aware of.  Since the
elements $\{ s_e, p_v : e \in E^1, v \in E^0 \}$ satisfy the
Cuntz-Krieger relations, the universal property of $\LC(E)$ gives
us an algebra homomorphism $\iota_E : \LC(E) \to C^*(E)$ with $\iota_E(e) =
s_e$ and $\iota_E(v) = p_v$.  Thus we have a homomorphic copy of
$\LC(E)$ inside $C^*(E)$.  To see that $\iota_E$ is injective, and
thus that $\iota_E (\LC(E))$ is isomorphic to $\LC(E)$, one needs
more than just the universal property.  Indeed, this fact relies
on the Graded Uniqueness Theorem, which is a fairly deep result.
We make this precise and give a proof of the injectivity of
$\iota_E$ in the following proposition.

\begin{proposition}[{\cite[Theorem~7.3]{Tom10}}] \label{embedding-prop}
Let $E = (E^0, E^1, r,s )$ be a graph.  Then there exists an
injective algebra $*$-homomorphism $\iota_E : \LC(E) \to C^*(E)$ with
$\iota_E (v) = p_v$ and $\iota_E (e) = s_e$ for all $v \in E^0$
and $e \in E^1$.  Consequently, $\LC(E)$ is canonically isomorphic
to the dense $*$-subalgebra $$\iota_E (\LC(E)) =
\operatorname{span} \{ s_\alpha s_\beta^* : \alpha, \beta \in E^*
\text{ and } r(\alpha) = r(\beta)\}$$ of $C^*(E)$.
\end{proposition}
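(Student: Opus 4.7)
The plan is to first produce the homomorphism $\iota_E$ via the universal property of $L_\C(E)$ as a complex $*$-algebra, which was just established in the preliminaries. The generators of $C^*(E)$ consist of mutually orthogonal projections $\{p_v : v \in E^0\}$ and partial isometries with mutually orthogonal ranges $\{s_e : e \in E^1\}$ satisfying the Cuntz-Krieger relations; it is routine to check that these exactly fulfill the four conditions (1)--(4) required by the $*$-algebra universal property of $L_\C(E)$. In particular, $s_e s_e^* \leq p_{s(e)}$ combined with the partial isometry identities gives $p_{s(e)} s_e = s_e$, and $s_e^* s_f = \delta_{e,f}\, p_{r(e)}$ follows from $s_e^* s_e = p_{r(e)}$ together with the orthogonal range condition. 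Thus the universal property produces a unique algebra $*$-homomorphism $\iota_E : L_\C(E) \to C^*(E)$ with $\iota_E(v) = p_v$ and $\iota_E(e) = s_e$.

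The main obstacle, and the non-trivial content of the proposition, is showing that $\iota_E$ is \emph{injective}. I would prove this by invoking the Graded Uniqueness Theorem for Leavitt path algebras, which asserts that a graded algebra homomorphism out of $L_K(E)$ which is nonzero on every vertex is automatically injective. Two things need to be verified to apply it. First, $\iota_E$ is graded: the $\Z$-grading on $L_\C(E)$ places $\alpha\beta^*$ in degree $|\alpha|-|\beta|$, while $C^*(E)$ carries the $\Z$-grading given by the spectral subspaces of the gauge action $\gamma$ of Definition~\ref{g-i-def}. Under $\gamma_z$ we have $\gamma_z(s_\alpha s_\beta^*) = z^{|\alpha|-|\beta|} s_\alpha s_\beta^*$, so $\iota_E$ carries homogeneous elements to homogeneous elements of the same degree. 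Second, $\iota_E(v) = p_v \neq 0$ for every $v \in E^0$; this is a standard fact about graph $C^*$-algebras, usually established by constructing a concrete Cuntz-Krieger $E$-family (for instance on the path space of $E$) in which every vertex projection is nonzero and then appealing to the universal property of $C^*(E)$. With both hypotheses in hand, the Graded Uniqueness Theorem yields injectivity.

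Finally, for the concrete description of the image, I would use the Leavitt path algebra relations (1)--(4) to reduce every element of $L_\C(E)$ to a finite $\C$-linear combination of monomials of the form $\alpha\beta^*$ with $r(\alpha)=r(\beta)$: relation (3) collapses any ghost-real juxtaposition $e^*f$, while relations (1)--(2) enforce the range-source compatibility. Applying $\iota_E$ shows
\[
\iota_E(L_\C(E)) = \operatorname{span}\{s_\alpha s_\beta^* : \alpha,\beta \in E^*,\ r(\alpha)=r(\beta)\}.
\]
Density in $C^*(E)$ is immediate, since $C^*(E)$ is, by definition, the norm closure of the $*$-subalgebra generated by $\{p_v, s_e\}$, which is exactly this span.
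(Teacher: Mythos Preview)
Your argument follows the same strategy as the paper's: produce $\iota_E$ from the universal property and then deduce injectivity from the Graded Uniqueness Theorem via the gauge action. The one place where the paper is more careful is the target grading. You say that $C^*(E)$ ``carries the $\Z$-grading given by the spectral subspaces of the gauge action,'' but in the algebraic sense this is false: the spectral subspaces $\{a : \gamma_z(a) = z^n a\}$ satisfy $A_m A_n \subseteq A_{m+n}$, yet their algebraic direct sum is only a dense $*$-subalgebra of $C^*(E)$, not all of it, so $C^*(E)$ is not a $\Z$-graded ring and the hypotheses of the Graded Uniqueness Theorem are not literally met. The paper avoids this by putting the grading on the \emph{image} $\mathcal{A} := \iota_E(L_\C(E))$, defining $\mathcal{A}_n := \{a \in \mathcal{A} : \int_\T z^{-n}\gamma_z(a)\,dz = a\}$ and verifying directly that $\mathcal{A} = \bigoplus_n \mathcal{A}_n$; then $\iota_E : L_\C(E) \to \mathcal{A}$ is a graded map between graded rings. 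Your argument is easily repaired in the same way (or, equivalently, by observing that the images of the homogeneous components of $L_\C(E)$ land in distinct spectral subspaces, which already forces the image to be graded), so this is a technical gap rather than a structural one.
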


\begin{proof}
Using the universal property of $\LC(E)$ we obtain an algebra
$*$-ho\-mo\-morph\-ism $\iota_E : \LC(E) \to C^*(E)$ with $\iota_E (v) =
p_v$ and $\iota_E (e) = s_e$.  In addition, the universal property
of $C^*(E)$ implies that there exists a gauge action $\gamma : \T
\to \operatorname{Aut} (C^*(E))$ with $\gamma_z(p_v) = p_v$ for
all $v \in E^0$ and $\gamma_z(s_e) = zs_e$ for all $e \in E^1$.  (See Remark~\ref{g-i-def} for details.) Set
$$\mathcal{A} := \iota_E(\LC(E)) = \operatorname{span} \{ s_\alpha
s_\beta^* : \alpha, \beta \in E^* \text{ and } r(\alpha) =
r(\beta)\}.$$  For $n \in \Z$ we may then define $\mathcal{A}_n :=
\{ a \in \mathcal{A} : \int_\T z^{-n} \gamma_z(a) \, dz = a \}$,
where the integration $dz$ is done with respect to normalized Haar
measure on $\T$.  (For details on what this ``$C^*$-algebra-valued integral over $\T$" means, we refer the reader to \cite[Lemma~3.1]{Rae}.)

We see that for an element $\lambda s_\alpha s_\beta^*$, we have
$$\int_\T \lambda s_\alpha s_\beta^* \, dz = \begin{cases} \lambda
s_\alpha s_\beta^* & \text{ if $|\alpha| - |\beta| = n$} \\ 0 &
\text{ otherwise.} \end{cases}$$  Thus an element $x :=
\sum_{k=1}^N \lambda_k s_{\alpha_k} s_{\beta_k}^* \in \mathcal{A}$
is in $\mathcal{A}_n$ if and only if $|\alpha_k| - |\beta_k| = n$
for all $1 \leq k \leq N$.  One can then see that $\mathcal{A} =
\bigoplus_{n \in \Z} \mathcal{A}_n$ as abelian groups.
Furthermore, if $x := \sum_{k=1}^M \lambda_k s_{\alpha_k}
s_{\beta_k}^* \in \mathcal{A}_m$ and $y := \sum_{l=1}^N \kappa_l
s_{\gamma_l} s_{\delta_l}^* \in \mathcal{A}_n$, we have that $$xy
= \sum_{k,l} \eta_{k,l} s_{\mu_{k,l}} s_{\nu_{k,l}}^*,$$ for some $\eta_{k,l} \in \mathbb{C}$ and with
$|\mu_{k,l}| - |\nu_{k,l}| = |\alpha_k| - |\beta_k| + |\gamma_l| -
|\delta_l| = m + n$.  Thus $xy \in \mathcal{A}_{m+n}$, and
$\mathcal{A}$ is $\Z$-graded.  Because $\iota_E(v) = p_v \in
\mathcal{A}_0$, $\iota_E(e) = s_e \in \mathcal{A}_1$, and
$\iota_E(e^*) = s_e^* \in \mathcal{A}_{-1}$, we see that $\iota_E$
is a graded algebra homomorphism.  Because we also have $\iota_E(v) = p_v
\neq 0$ for all $v \in E^0$, it follows from the Graded Uniqueness
Theorem for Leavitt path algebras (see \cite[Theorem~4.8]{Tom10}) that $\iota_E$ is injective.
\end{proof}

\begin{remark}
In view of Proposition~\ref{embedding-prop}, whenever we have a
graph $E$ we may identify $\LC(E)$ with a dense $*$-subalgebra of
$C^*(E)$ via the embedding $\iota_E : \LC(E) \to C^*(E)$.  (In
particular, for each $n\geq 2$ we may view the Leavitt algebra
$\LC(1,n)$ as a dense $*$-subalgebra of the Cuntz algebra
$\mathcal{O}_n$.)  Because of this, we will often write $p_v$, $s_e$, and $s_e^*$ for the generators of $L_\C(E)$, rather than using the notation $v$, $e$, and $e^*$ common for Leavitt path algebras.  This helps us to view $L_\C(E)$ as a $*$-subalgebra of $C^*(E)$, and to identify the respective generators.

In addition, we may consider the norm on $L_\C(E)$ obtained by restricting the norm on $C^*(E)$.  We will, without comment, make reference to this norm throughout the sequel, and when we write $\| x \|$ for $x \in L_\C(E)$, we of course mean the norm of $x$ when viewed as an element in $C^*(E)$.  Note that this norm on $L_\C(E)$ is the restriction of a $C^*$-norm and therefore satisfies $\| x^* x \| = \| x \|^2$ and $\| x^* \| = \|x \|$ for all $x \in L_\C(E)$.
\end{remark}

We now consider when $L_\C(E)$ is the same as $C^*(E)$.  It follows from \cite[Proposition~3.5]{APM} and \cite[Corollary~2.3]{KPR} that if $E$ is a finite graph with no cycles, then $C^*(E) \cong L_\C(E) \cong \bigoplus_{i=1}^k M_{n(v_i)} (\C)$, where $v_1, \ldots, v_k$ are the sinks of $E$ and $n(v_i)$ is the number of paths in $E$ ending at $v_i$ (including the trivial path $v_i$ itself).  In this particular case we have that the map $\iota_E : \LC(E) \to C^*(E)$ is
surjective. In Proposition~\ref{agree-finite-no-cycles} we show that the surjectivity of $\iota_E$ occurs precisely in this situation.

\begin{lemma} \label{ring-*-hom-inj-isometric}
If $A$ and $B$ are unital $C^*$-algebras and $\phi : A \to B$ is a unital ring $*$-homomorphism (n.b.~$\phi$ is not necessarily $\C$-linear) that is injective, then $\| \phi (a) \| = \| a \|$ for all $a \in A$.
\end{lemma}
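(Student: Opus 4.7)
The plan is to reduce the norm equality to spectral computations on self-adjoint elements, since in a $C^*$-algebra the norm of a self-adjoint element is the supremum of the absolute values of its spectrum. The main obstacle is that $\phi$ is not assumed $\C$-linear, so the standard result ``an injective unital $*$-ho\-mo\-morph\-ism between $C^*$-algebras is isometric'' is not directly available: $\phi(i \cdot 1_A)$ need not equal $i \cdot 1_B$. The key insight will be that, despite the possible wildness of $\phi$ on imaginary scalars, $\phi$ is forced to fix real scalars, and that is enough to drive the whole argument on self-adjoint elements.

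First, I would prove the rigidity statement that $\phi(\lambda \cdot 1_A) = \lambda \cdot 1_B$ for every $\lambda \in \mathbb{R}$. For rational $\lambda$ this is immediate from $\phi$ being a unital ring homomorphism. For $\lambda > 0$, write $\lambda = \mu^2$ with $\mu \in \mathbb{R}$: since $\mu \cdot 1_A$ is self-adjoint, so is $\phi(\mu \cdot 1_A)$, whence $\phi(\lambda \cdot 1_A) = \phi(\mu \cdot 1_A)^* \phi(\mu \cdot 1_A) \geq 0$ in $B$. Sandwiching an arbitrary real $\lambda$ between rationals $q < \lambda < q'$ and using this positivity gives $q \cdot 1_B \leq \phi(\lambda \cdot 1_A) \leq q' \cdot 1_B$; the standard $C^*$-estimate that $\|x\| \leq c$ whenever a self-adjoint $x$ satisfies $-c \cdot 1_B \leq x \leq c \cdot 1_B$ then forces $\phi(\lambda \cdot 1_A) = \lambda \cdot 1_B$.

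Next, I would show $\phi$ is norm-decreasing. For self-adjoint $a \in A$ and $\lambda \in \mathbb{R} \setminus \sigma_A(a)$, the element $\lambda \cdot 1_A - a$ is invertible in $A$; applying $\phi$ and using the previous paragraph gives that $\lambda \cdot 1_B - \phi(a)$ is invertible in $B$, so $\sigma_B(\phi(a)) \subseteq \sigma_A(a)$. Since $\phi(a)$ is self-adjoint, $\|\phi(a)\| = \sup |\sigma_B(\phi(a))| \leq \sup |\sigma_A(a)| = \|a\|$; the $C^*$-identity $\|a\|^2 = \|a^*a\|$ promotes this to all $a \in A$, so $\phi$ is continuous.

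Finally, I would reverse the inequality by establishing $\phi(f(a)) = f(\phi(a))$ for every continuous real-valued $f$ on a compact subset of $\mathbb{R}$ containing $\sigma_A(a) \cup \sigma_B(\phi(a))$, where $a$ is self-adjoint and $f(\cdot)$ is the continuous functional calculus. For real polynomials $p$ this is automatic from the ring-hom property and the first step, and Stone--Weierstrass together with the continuity of $\phi$ extends the identity to continuous real-valued $f$. If some $\lambda_0 \in \sigma_A(a) \setminus \sigma_B(\phi(a))$ existed, Urysohn's lemma would supply a continuous $f$ with $f(\lambda_0) = 1$ and $f \equiv 0$ on $\sigma_B(\phi(a))$; then $f(a) \neq 0$ in $A$ while $\phi(f(a)) = f(\phi(a)) = 0$ in $B$, contradicting injectivity. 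Hence $\sigma_A(a) = \sigma_B(\phi(a))$ and $\|a\| = \|\phi(a)\|$ for self-adjoint $a$, with the general case following from $\|a\|^2 = \|a^*a\| = \|\phi(a^*a)\| = \|\phi(a)^*\phi(a)\| = \|\phi(a)\|^2$. The hardest step is the rigidity in the first paragraph; once $\phi$ is pinned down on $\mathbb{R}\cdot 1_A$, the real-polynomial functional calculus on self-adjoint elements does the rest.
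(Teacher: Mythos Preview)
Your proof is correct. The paper itself does not give a self-contained argument for this lemma; it simply cites \cite[Corollary~2.8]{Tom11} and refers the reader there. Your approach---first pinning down $\phi$ on $\mathbb{R}\cdot 1_A$ via positivity and rational approximation, then running the real-spectral argument on self-adjoint elements, and finally using injectivity together with the continuous functional calculus to get equality of spectra---is exactly the strategy of \cite{Tom11}, so your proposal aligns with the source the paper defers to.
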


\noindent The above lemma follows from \cite[Corollary~2.8]{Tom11}, and a self-contained proof of the result can be found in \cite{Tom11}.

%\begin{lemma} \label{direct-sum-finite-lem}
%Let $I$ be a countable set (i.e., $I$ is finite or countably infinite).  Let $\bigoplus^\textnormal{Alg}_I \C$ denote the algebraic direct sum of $I$ copies of $\C$, and let $\bigoplus^\textnormal{$C^*$-alg}_I \C$ denote the
%$C^*$-algebraic direct sum of $I$ copies of $\C$.  If $\bigoplus^\textnormal{Alg}_I \C \cong \bigoplus^\textnormal{$C^*$-alg}_I \C$ (as vector spaces), then $I$ is finite.
%\end{lemma}

%\begin{proof}
%Note that the elements of $\bigoplus^\textnormal{Alg}_I \C$ will be finitely supported, and hence the space  $\bigoplus^\textnormal{Alg}_I \C$ has a Hamel basis that is finite or countably infinite.  However, $\bigoplus^\textnormal{$C^*$-alg}_I \C$ is a complete normed vector space.  An application of the Baire category theorem, which states that no complete metric space is a countable union of nowhere dense sets, shows that $\bigoplus^\textnormal{$C^*$-alg}_I \C$ has either a finite Hamel basis or an uncountable Hamel basis.  Thus if $\bigoplus^\textnormal{Alg}_I \C \cong \bigoplus^\textnormal{$C^*$-alg}_I \C$ (as vector spaces), it must be the case that these spaces have a finite Hamel basis, and hence $I$ is finite.
%\end{proof}

\begin{proposition} \label{agree-finite-no-cycles}
If $E$ is a graph, then the following are equivalent:
\begin{enumerate}
\item $\iota_E : \LC(E) \to C^*(E)$ is surjective;
\item $\LC(E) \cong C^*(E)$ (as $*$-algebras);
\item $\LC(E) \cong C^*(E)$ (as $*$-rings);
\item $E$ is a finite graph with no cycles;
\item $L_\C(E)$ is finite dimensional; and
\item $C^*(E)$ is finite dimensional.
\end{enumerate}
Moreover, when the above hold we have that $\LC(E)$ and $C^*(E)$ are both isomorphic to $M_{n(v_1)} ( \C) \oplus \ldots \oplus M_{n(v_k)} (
\C)$, where $v_1, ..., v_k$ are the sinks of $E$ and $n(v_i)$
is the number of directed paths in $E$ ending at $v_i$ for each
$1\leq i \leq k$.
\end{proposition}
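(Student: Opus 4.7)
The plan is to establish the equivalences via the cycle $(4) \Rightarrow (1) \Rightarrow (2) \Rightarrow (3) \Rightarrow (4)$ together with the side chain $(4) \Leftrightarrow (5) \Leftrightarrow (6)$; the substantive step, and the main obstacle, is $(3) \Rightarrow (4)$, which I will handle with a Baire-category argument followed by an application of Lemma~\ref{ring-*-hom-inj-isometric}. The ``moreover'' conclusion will follow directly from the cited results (\cite{APM}, \cite{KPR}) stated just before the proposition, which identify both $L_\C(E)$ and $C^*(E)$ with $\bigoplus_{i=1}^k M_{n(v_i)}(\C)$ when $E$ is finite with no cycles.

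Most directions are short. For $(4) \Rightarrow (5)$ and $(4) \Rightarrow (6)$, I invoke the cited structural results; this also gives the ``moreover'' clause. The implication $(6) \Rightarrow (5)$ is immediate since $\iota_E$ embeds $L_\C(E)$ as a $\C$-subspace of $C^*(E)$. For the contrapositive of $(5) \Rightarrow (4)$: an infinite $E^0$ yields infinitely many orthogonal nonzero idempotents $\{p_v\}$; an infinite emitter $v$ yields infinitely many orthogonal nonzero projections $\{s_e s_e^* : s(e)=v\}$; and a cycle $c$ yields elements $\{s_c^n\}_{n \geq 1}$ in distinct homogeneous components of the $\Z$-grading, hence linearly independent. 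Any of these makes $L_\C(E)$ infinite dimensional. For $(4) \Rightarrow (1)$: when $E$ is finite with no cycles, $E^*$ is finite, so $\iota_E(L_\C(E)) = \operatorname{span}\{s_\alpha s_\beta^* : r(\alpha)=r(\beta)\}$ is a finite-dimensional (hence closed), dense subspace of $C^*(E)$ and must equal $C^*(E)$. The steps $(1) \Rightarrow (2) \Rightarrow (3)$ are immediate, since $\iota_E$ is an injective $*$-algebra homomorphism by Proposition~\ref{embedding-prop}.

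For $(3) \Rightarrow (4)$, let $\phi : L_\C(E) \to C^*(E)$ be a $*$-ring isomorphism, and proceed in three stages. Stage one uses Baire category to show $E^0$ is finite: every $x \in L_\C(E)$ lies in $p_F L_\C(E) p_F$ for some finite $F \subseteq E^0$, so applying $\phi$ yields the equality $C^*(E) = \bigcup \{\phi(p_F)\, C^*(E)\, \phi(p_F) : F \subseteq E^0 \text{ finite}\}$, a countable union of closed $\C$-subspaces of the Banach space $C^*(E)$ (each summand being a corner $C^*$-subalgebra). The Baire category theorem forces one such corner to have nonempty interior, and a linear subspace of a normed space with nonempty interior is the whole space; thus $\phi(p_F)\, C^*(E)\, \phi(p_F) = C^*(E)$ for some $F$, so $\phi(p_F)$ is a two-sided identity for $C^*(E)$. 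Hence $C^*(E)$ is unital and $E^0$ is finite. Stage two applies Lemma~\ref{ring-*-hom-inj-isometric}: with $E^0$ finite both algebras are unital, and the composition $\iota_E \circ \phi^{-1} : C^*(E) \to C^*(E)$ is a unital, injective ring $*$-homomorphism between unital $C^*$-algebras, hence isometric, so its image $\iota_E(L_\C(E))$ is closed and, by density, equals $C^*(E)$; this is $(1)$. Stage three rules out cycles and infinite emitters from $(1)$: if $E$ contained a cycle $c$, then $\sum_{n \geq 1} n^{-2} s_c^n$ would converge in norm in $C^*(E)$ but have nonzero image under the spectral projection $\Phi_{n|c|}$ of the gauge action for every $n \geq 1$, so could not be a finite $\C$-linear combination of $\{s_\alpha s_\beta^*\}$, contradicting surjectivity of $\iota_E$; a similar norm-convergent series $\sum n^{-2} s_{e_n} s_{e_n}^*$ at an infinite emitter gives the remaining exclusion. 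Combined with $E^0$ finite, this yields $(4)$, and the hardest ingredient in the whole argument is the Baire step that converts the algebraic ``local-units'' property of $L_\C(E)$, transported across $\phi$, into the $C^*$-algebraic conclusion that $C^*(E)$ is unital.
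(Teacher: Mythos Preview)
Your proof is correct and covers all the implications, but it departs from the paper's argument in two of the substantive steps, and in both places your route is arguably cleaner.

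For showing $E^0$ is finite under hypothesis (3), the paper does \emph{not} use Baire category. Instead it works with the inverse isomorphism $\phi:C^*(E)\to L_\C(E)$, pulls back an infinite list of vertices $v_k$ to mutually orthogonal projections $p_k:=\phi^{-1}(v_k)$ in $C^*(E)$, forms the norm-convergent sum $p=\sum 2^{-k}p_k$, and then observes that $\phi(p)\in L_\C(E)$ must be annihilated on the left by all but finitely many $v_N$, contradicting $v_N=\phi(2^Np_Np)=2^Nv_N\phi(p)$. Your Baire argument is a perfectly good alternative: it trades an explicit element for a soft categorical fact, and has the pleasant feature that it uses only that $\phi$ carries the local-units structure of $L_\C(E)$ onto $C^*(E)$.

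For ruling out cycles once (1) is in hand, the paper splits into two cases (cycle with exit, cycle without exit), building in the first case a sequence of orthogonal projections from powers of the cycle and in the second case invoking spectral theory to identify a copy of $C(\mathbb{T})$ that cannot sit inside $\C[x,x^{-1}]$. Your single argument---form $\sum n^{-2}s_c^{\,n}$ and observe that its gauge-spectral components $\Phi_{n|c|}$ are all nonzero, whereas any element of $\iota_E(L_\C(E))$ has only finitely many nonzero spectral components---handles both cases at once and is more economical. The remaining pieces (using Lemma~\ref{ring-*-hom-inj-isometric} on $\iota_E\circ\phi^{-1}$ to get closedness of $\iota_E(L_\C(E))$, and the infinite-emitter exclusion) are essentially the same in spirit as the paper's, though the paper derives completeness of $L_\C(E)$ first and then invokes the universal property of $C^*(E)$ rather than applying the lemma directly to the composite.
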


\begin{proof}
\noindent $(2) \implies (3)$. This is trivial since any algebra $*$-isomorphism is a ring $*$-isomorphism.

\smallskip

\noindent $(3) \implies (1)$.  Let $\phi: C^*(E) \to L_\C(E)$ be a ring $*$-isomorphism.  We shall first show that $E$ has a finite number of vertices.  Suppose, for the sake of contradiction that $E$ has an infinite number of vertices, and let $v_1, v_2, \ldots$ be a sequence of distinct vertices in $E$.  Let $p_k := \phi^{-1}(v_k)$ for each $k$.  Since $\phi$ (and hence also $\phi^{-1}$) is a ring $*$-homomorphism, $\{ p_k : k \in \N \}$ is a set of mutually orthogonal projections in $C^*(E)$.  Consider the element $p := \sum_{k=1}^\infty (1/2^k) p_k \in C^*(E)$.  (This infinite sum converges since the $p_{k}$'s are mutually orthogonal projections.)  Let $P := \phi(p) \in L_\C(E)$.  We may write $P = \sum_{j=1}^n \lambda_j \alpha_j \beta_j^*$ for $\alpha_j, \beta_j \in E^*$.  Hence for a large enough $N$ we have that $v_N P = 0$.  But then $$ v_N = \phi(p_N) = \phi(2^N p_N p ) = 2^N \phi(p_N) \phi(p) = 2^N v_N P = 0$$ which is a contradiction.  Hence $E$ has a finite number of vertices, and it follows that $C^*(E)$ and $L_\C(E)$ are unital, and that the ring $*$-isomorphism $\phi : C^*(E) \to L_\C(E)$ is unital.  Consequently, since $L_\C(E)$ is contained in a $C^*$-algebra, Lemma~\ref{ring-*-hom-inj-isometric} implies that $\phi$ is isometric; i.e. $\| \phi(a) \| = \| a \|$ for all $a \in C^*(E)$.  It follows that $L_\C(E)$ is complete in the norm it inherits as a subalgebra of $C^*(E)$: If $\{ a_i \}_{i=1}^\infty \subseteq L_\C(E)$ is a Cauchy sequence in $L_\C(E)$, then $\{ \phi^{-1} (a_i) \}_{i=1}^\infty$ in a Cauchy sequence in $C^*(E)$ and $x = \lim \phi^{-1}(a_i)$ for some $x \in C^*(E)$.  But then the fact that $\phi$ is isometric implies that $\lim a_i = \phi(x) \in L_\C(E)$.  Since $L_\C(E)$ has a $C^*$-norm in which it is complete, $L_\C(E)$ is a $C^*$-algebra.

Because $\LC(E)$ is a $C^*$-algebra containing the Cuntz-Krieger $E$-family $\{ v, e : v \in E^0, e \in E^1 \}$, by the universal property of $C^*(E)$ there exists an algebra $*$-homomorphism $\psi : C^*(E) \to \LC(E)$ such that $\psi(p_v) = v$ and $\psi (s_e) = e$ for all $v \in E^0, e \in E^1$. We then see that $\iota_E \circ \psi$ is the identity on $C^*(E)$ (simply check on generators), and thus $\iota_E$ is surjective.

\smallskip

\noindent $(1) \implies (4)$. Since  $\iota_E : \LC(E) \to C^*(E)$
is surjective, and thus an isomorphism by Proposition
\ref{embedding-prop}, we will identify $\LC(E)$ with $C^*(E)$ and
take the generating Cuntz-Krieger $E$-family for both to be $\{
s_e, p_v : e \in E^1, v \in E^0 \}$. We also have that
$$C^*(E) = \LC(E) = \algspan \{ s_\alpha s_\beta^* : \alpha, \beta
\in E^* \text{ and } r(\alpha) = r(\beta) \}.$$

We shall first show that $E$ has a finite number of vertices.  Suppose
to the contrary that $E$ has an infinite number of vertices, and list these elements as $E^0 = \{ v_1, v_2, \ldots \}$.  Then $\sum_{n=1}^\infty \frac{1}{n^2} p_{v_n}$ converges to an element in $C^*(E)$. On the other hand, since $\LC(E) = C^*(E)$, any element of $C^*(E)$ may be written as a finite $\C$-linear combination of terms $s_\alpha s_\beta^*$, and hence every element of $C^*(E)$ is orthogonal to all but a finite number of projections in the set $\{ p_v : v \in E^0 \}$.  But  $\sum_{n=1}^\infty \frac{1}{n^2} p_{v_n}$ is not orthogonal to any $p_v$ with $v \in E^0$.  Hence we have a contradiction.

Next we shall show that each vertex emits a finite number of edges.  To the contrary, suppose that there exists $v \in E^0$ with $s^{-1}(v)$ infinite.  List the edges that $v$ emits as $s^{-1}(v) = \{ e_1, e_2, e_3, \ldots \}$.  Since the $s_{e_n}s_{e_n}^*$'s are mutually orthogonal projections, the sum $ \sum_{n=1}^\infty \frac{1}{n^2} \, s_{e_n}s_{e_n}^*$ converges to an element  in $C^*(E)$.  Since $C^*(E) = \LC(E)$, we may write this element as a finite $\C$-linear combination of terms $s_\alpha s_\beta^*$.  By grouping terms, we may write $$\sum_{n=1}^\infty \frac{1}{n^2} s_{e_n}s_{e_n}^* = \sum_{k=1}^r \lambda_k s_{\alpha_k} s_{\beta_k}^* + \sum_{k=1}^q \mu_k p_{v_k}$$ for some $\alpha_k, \beta_k \in E^*$, $v_k \in E^0$, and $\lambda_k, \mu_k \in \C$ with the $p_{v_k}$'s mutually orthogonal and with either $|\alpha_k | \geq 1$ or $|\beta_k | \geq 1$ for all $k$.  Since there are an infinite number of elements in $s^{-1}(v)$, we may choose $m$ large enough that $e_m \in s^{-1}(v)$ is not an edge appearing in any $\alpha_k$ or $\beta_k$, and $1/m^2 < |\mu_k|$ for all $1 \leq k \leq q$.  Then
\begin{align*}
\frac{1}{m^2} p_{r(e_m)} &= s_{e_m}^* \left( \sum_{n=1}^\infty \frac{1}{n^2} s_{e_n}s_{e_n}^* \right) s_{e_m} \\
&= s_{e_m}^* \left(  \sum_{k=1}^r \lambda_k s_{\alpha_k} s_{\beta_k}^* + \sum_{k=1}^q \mu_k p_{v_k} \right) s_{e_m} \\
&= \begin{cases} 0 & \text{ if $v_k \neq s(e_m)$ for all $k$} \\ \mu_k p_{r(e_m)} & \text{ if $v_k = s(e_m)$ for some $k$,} \end{cases}
\end{align*}
which is a contradiction.  Thus each vertex in $E$ emits a finite number of edges.

Since $E$ has a finite number of vertices and each vertex emits a finite number of edges, it follows that $E$ is a finite graph.  We will now show that $E$ contains no cycles.  We consider two cases, and show that we are led to a contradiction in both.

\noindent \textsc{Case I:} $E$ contains cycles, and at least one cycle $\mu$ has an exit $e$.

Without loss of generality we may assume that $s(\mu) = s(e) = v$.  Then we have   $p_v > s_\mu s_\mu^*$.  The existence of the exit $e$ for $\mu$ implies that $p_v \neq s_\mu s_\mu^*$, since otherwise $0=s_\mu s_\mu^*s_e = p_vs_e =s_e$, a contradiction. In a similar manner we get that $p_v > s_\mu s_\mu^* > s_{\mu\mu} s_{\mu\mu}^* > s_{\mu\mu\mu} s_{\mu\mu\mu}^* \ldots$, with no equality occurring in the chain.  For each $n \in \N$ let $S_n := s_{\mu \mu \ldots \mu}$ where the $\mu$ appears $n$ times.  Let $P_0 := p_v - S_1 S_1^*$, and for each $n \in \N$ let
$$P_n := S_n S_n^* - S_{n+1} S_{n+1}^*.$$  Then the $P_n$'s are nonzero mutually orthogonal projections in $C^*(E)$.
As such, the sum $\sum_{n=1}^\infty \frac{1}{n^2} \, P_n$ converges to an element  in $C^*(E)$. On the other hand, since $\LC(E) = C^*(E)$, we may write this element as a finite $\C$-linear combination of terms $s_\alpha s_\beta^*$.  Since $E$ is row-finite (in fact, finite) by the previous paragraph, and none of the vertices on the cycle $\mu$ are sinks, we may use the Cuntz-Krieger relation $p_w = \sum_{s(e)=w} s_es_e^*$ as needed at each vertex $w$ of $\mu$ to write the  element  in the following form:
$$\sum_{n=1}^\infty \frac{1}{n^2} P_n= \sum_{k=1}^r \lambda_k s_{\alpha_k} s_{\beta_k}^* + \sum_{k=1}^q \mu_k s_{\gamma_k} s_{\delta_k}^*$$ where for each $k$ either $\alpha_k$ or $\beta_k$ has the form $\mu^j \epsilon$ for some $\epsilon \in E^*$ such that $\epsilon$ is not an initial segment of $\mu$, and for each $k$ we have $\gamma_k$ and $\delta_k$ are powers of $\mu$.  If $\alpha = \mu^j \epsilon$ and $\epsilon$ is not an initial segment of $\mu$, then for $n \geq j+1$ we have $P_n s_\alpha = s_\alpha^* P_n = 0$.  Furthermore, if $\alpha = \mu^i$ and $\beta=\mu^j$, then for $n \geq \max \{i+1, j+1 \}$ we have $$P_n s_\alpha s_\beta^* P_n = \begin{cases} P_n & \text{ if $i=j$} \\ 0 & \text{ if $i \neq j$.} \end{cases}$$  Thus for $N$ sufficiently large, we have that $m \geq N$ implies
\begin{align*}
\frac{1}{m^2} P_m &= P_m \left( \sum_{n=1}^\infty \frac{1}{n^2} P_n \right) P_m \\
&= P_m \left( \sum_{k=1}^r \lambda_k s_{\alpha_k} s_{\beta_k}^* + \sum_{k=1}^q \mu_k s_{\gamma_k} s_{\delta_k}^* \right) P_m \\
&= \sum_{\{ k \, : \, |\gamma_k| = |\delta_k| \} } \mu_k P_{i_k}
\end{align*}
and hence $\frac{1}{m^2} = \sum_{\{ k \, : \, |\gamma_k| = |\delta_k| \} } \mu_k $ for all $m \geq N$, which is a contradiction.

\smallskip

\noindent \textsc{Case II:}  $E$ contains cycles, but no cycle in $E$ has an exit.

To show that this case cannot occur, we let $\mu$ be any cycle in $E$.  Since $\mu$ has no exits, we have $s_\mu s_\mu^* = p_v$ and $s_\mu$ is a unitary.  Let $C_\mu := C^*(s_\mu)$ be the $C^*$-subalgebra of $C^*(E)$ generated by $s_\mu$.  Since $s_\mu$ is a unitary, it follows from spectral theory that $C_\mu \cong C( \sigma (s_\mu))$, where $\sigma (s_\mu)$ denotes the spectrum of $\mu$.  We shall show that
$\sigma (s_\mu) = \T$.  Because $s_\alpha$ is a unitary in $B_v$, it follows that $\sigma (s_\alpha) \subseteq \mathbb{T}$.  In addition, since the spectrum of an element in a $C^*$-algebra is always a nonempty set (see \cite[Theorem~VII.3.6]{Co2}), there exists $w \in \sigma (s_\alpha) \cap \mathbb{T}$. Choose any $x \in \mathbb{T}$, and let $z$ be an element of $\mathbb{T}$ with the property that $z^n = w \overline{x}$, where $n = |\mu|$.  We see that $\{zs_e, p_v : e \in E^1, v \in E^0 \}$ is also a Cuntz-Krieger $E$-family generating $C^*(E)$, and hence by the universal property there exists a $*$-homomorphism $\gamma_z : C^*(E) \to C^*(E)$ with $\gamma_z(s_e) = zs_e$ and $\gamma(p_v) = p_v$.  Moreover, since $\gamma_{\overline{z}}$ is an inverse for $\gamma_z$, we have that $\gamma_z$ is an automorphism.  Since $\gamma_z (s_\mu) = z^n s_\mu$, we see that $\gamma_z$ restricts to an automorphism $\gamma_z : C_\mu \to C_\mu$.    Thus
\begin{align*}
w \in \sigma (s_\mu) & \Longleftrightarrow s_\mu - w 1_{C_\mu} \text{ is not invertible in $C_\mu$} \\
& \Longleftrightarrow s_\mu - w p_v \text{ is not invertible in $C_\mu$} \\
& \Longleftrightarrow \gamma_z(s_\mu - w p_v) \text{ is not invertible in $C_\mu$} \\
& \Longleftrightarrow z^n s_\mu - w p_v \text{ is not invertible in $C_\mu$} \\
& \Longleftrightarrow w \overline{x} s_\mu - w 1_{C_\mu} \text{ is not invertible in $C_\mu$} \\
& \Longleftrightarrow \overline{x} s_\mu - 1_{C_\mu} \text{ is not invertible in $C_\mu$} \\
& \Longleftrightarrow s_\mu - x 1_{C_\mu} \text{ is not invertible in $C_\mu$} \\
& \Longleftrightarrow x \in \sigma (s_\mu).
\end{align*}
Because $x$ was an arbitrary element of $\mathbb{T}$, it follows that $\sigma (s_\alpha) = \mathbb{T}$.

By spectral theory we have that $B_v = C^*(s_\alpha) \cong C (\sigma (s_\alpha)) = C( \mathbb{T})$.  However, if $a \in L_\C(E) \cap C_\mu$, then $a = \sum_{k=1}^n \lambda_k s_{\alpha_k} s_{\beta_k}^*$.  Since $p_v$ is the identity of $C_\mu$ we have $a = p_v a p_v =  \sum_{k=1}^n \lambda_k p_v s_{\alpha_k} s_{\beta_k}^*p_v$ so without loss of generality we may assume that $s(\alpha_k) = r(\alpha_k) = v$ for all $k$.  Since $\mu$ is a cycle based at $v$ and having no exits, it follows that each $\alpha_k$ and $\beta_k$ has the form $\mu \mu \ldots \mu$.  Hence $a$ has the form $a = \sum_{k=1}^n \lambda_k s_{\mu^{m_k}}$ for $m_k \in \Z$.  Thus $C_\mu \cong C(\T)$ and $L_\C(E) \cap C_\mu$ is a  $*$-subalgebra of this $C^*$-algebra isomorphic to $\C[x,x^{-1}]$.  Hence $L_\C(E) \cap C_\mu \neq C_\mu$, which contradicts the fact that $L_\C(E) = C^*(E)$.

It follows from the two cases considered above  that $E$ has no cycles. Thus $E$ is a finite graph with no cycles.

\smallskip

\noindent $(4) \implies (5)$. If $E$ is finite with no cycles, then there are a finite number of paths in $E$.  Since $L_\C(E) = \algspan \{ s_\alpha s_\beta^* : \alpha, \beta \in E^* \}$, we see that $L_\C(E)$ is spanned by a finite set and therefore finite dimensional.

\smallskip

\noindent $(5) \implies (6)$.  Since $L_\C(E)$ is a finite-dimensional space, all norms on $L_\C(E)$ are equivalent and $L_\C(E)$ is closed in any norm.  Since $C^*(E)$ is the closure of $L_\C(E)$, we have that $C^*(E) = L_\C(E)$.

\smallskip

\noindent $(6) \implies (2)$. Since $C^*(E)$ is a finite-dimensional space and $L_\C(E)$ is a subspace, it follows that $L_\C(E)$ is finite dimensional.  For any finite-dimensional space, all norms on this space are equivalent and the space is closed in each norm.  Since $C^*(E)$ is the closure of $L_\C(E)$, it follows that $L_\C(E) = C^*(E)$.

\medskip

Moreover, if any (and hence all) of the above conditions are satisfied, then Condition~(4) together with \cite[Corollary~2.3]{KPR} and \cite[Proposition~3.5]{APM} shows that $\LC(E) \cong C^*(E) \cong M_{n(v_1)} ( \C) \oplus \ldots \oplus M_{n(v_k)} (
\C)$, where $v_1, ..., v_k$ are the sinks of $E$ and $n(v_i)$
is the number of directed paths in $E$ ending at $v_i$ for each
$1\leq i \leq k$.
\end{proof}

\begin{remark}
Here is an alternate (although less straightforward) verification of Case II in the proof of (1) $\Longrightarrow$ (4) in Proposition \ref{agree-finite-no-cycles}, using an argument which directly addresses the relationship between $L_\C(E)$ and $C^*(E)$.  If $E$ is a finite graph which contains cycles, but for which no cycle has an exit, then the stable rank of $C^*(E)$ equals 1 by \cite[Theorem 3.4]{DHS}.  On the other hand, in this same situation, the stable rank of $L_\C(E)$ is greater than 1 by \cite[Theorem 2.8]{AraP}.  Thus $C^*(E)$ is not isomorphic to $L_\C(E)$ in this case, so that the injection $\iota_E: L_\C(E) \to C^*(E)$ cannot be surjective.
\end{remark}

%%%%%%%%%%%%%%%%%%%%%%%%%%%%%%%%%%%%%%%%%%%%%%%%%%%%%%
\section{Algebra $*$-homomorphisms of graph algebras} \label{Hom-iso-sec}
%%%%%%%%%%%%%%%%%%%%%%%%%%%%%%%%%%%%%%%%%%%%%%%%%%%%%%

In the remainder of this paper we will be concerned with isomorphisms between graph $C^*$-algebras, and between Leavitt path algebras over $\C$.  To see that many of our results are exceptional in the context of general isomorphisms between $C^*$-algebras and dense $*$-subalgebras, we consider a few examples here that show how unwieldy things can be in the general situation.  We will refer to these examples throughout the remainder of our paper to make it clear that specific results for graph algebras are truly special.

\subsection{Examples of dense $*$-subalgebras of $C^*$-algebras}  In general, if $A$ is a $C^*$-algebra
with a dense $*$-subalgebra $A_0$, and $B$ is a $C^*$-algebra with
a dense $*$-subalgebra $B_0$, then there is no relationship
between isomorphisms of $A$ and $B$,  and isomorphisms of $A_0$
and $B_0$.  For instance, there are examples where $A$ and $B$ are
isomorphic, but $A_0$ is not isomorphic to $B_0$.  In particular,
if $\phi : A \to B$ is an algebra $*$-isomorphism, then $\phi$ does not
necessarily restrict to an isomorphism between $A_0$ and $B_0$ --- in
fact, there is no reason the restriction $\phi |_{A_0}$ must even take
values in $B_0$.  Similarly, there are examples where $A_0$ and
$B_0$ are isomorphic (and even $*$-isomorphic), but $A$ is not isomorphic to $B$. Here there
are two things that can go wrong:  (1) If $\phi : A_0 \to B_0$ is
an isomorphism, then $\phi$ may not be bounded with respect to the
norms on $A$ and $B$, and hence does not necessarily extend to a
map from $A$ to $B$; or (2) even when $\phi$ does extend to a map
from $A$ to $B$ this extension may not be bijective.

Let us consider a few examples to see how these phenomena can occur.

\begin{example} \label{dens-*-subalg-Ex-1}
Suppose that $X := [0,1] \subseteq \mathbb{R}$ and that $Y := [0,1] \cup [2,3] \subseteq \mathbb{R}$.  Let $A := C(X)$ and let $A_0$ denote the $*$-algebra of polynomials with complex coefficients viewed as functions on $X$.  Likewise, let $B:= C(Y)$ and let $B_0$ denote the $*$-algebra of polynomials with complex coefficients viewed as functions on $Y$.  Then $A_0$ is a dense $*$-subalgebra of $A$, and $B_0$ is a dense $*$-subalgebra of $B$.  If we let $\phi : A_0 \to B_0$ be the function which takes a polynomial $p(x)$ viewed as a function on $X$ and sends it to the same polynomial $p(x)$ viewed as a function on $Y$, then clearly $\phi$ is an algebra $*$-isomorphism from $A_0$ onto $B_0$.  On the other hand, $A = C(X)$ is not isomorphic to $B = C(Y)$ (as $*$-algebras) since $X$ and $Y$ are not homeomorphic.  In particular, $\phi$ does not extend to an algebra $*$-isomorphism from $A$ to $B$.
\end{example}

\begin{example} \label{dens-*-subalg-Ex-2}
Let $A = B = B_0 = C([0,1])$.  Also let $A_0$ be the $*$-algebra of polynomials with complex coefficients viewed as functions on $[0,1]$.  Then $A_0$ is a dense $*$-subalgebra of $A$, and $B_0$ is a dense $*$-subalgebra of $B$.  However, we see that $A$ is isomorphic to $B$ (as $*$-algebras), while $A_0$ is not isomorphic to $B_0$. (To see this, note that $A_0$ has a countable Hamel basis while $B_0$ does not, so the two are not even isomorphic as vector spaces.)  Thus an algebra $*$-isomorphism between $C^*$-algebras need not restrict to an algebra $*$-isomorphism between dense $*$-subalgebras of the $C^*$-algebras.
\end{example}

\begin{example} \label{dens-*-subalg-Ex-3}
Let $X := [0,1]$ and $Y := [0,2]$.  Also let $A := C(X)$, and let $A_0$ denote the $*$-algebra of polynomials with complex coefficients viewed as functions on $X$.  Likewise, let $B:= C(Y)$ and let $B_0$ denote the $*$-algebra of polynomials with complex coefficients viewed as functions on $Y$.  Then $A_0$ is a dense $*$-subalgebra of $A$, and $B_0$ is a dense $*$-subalgebra of $B$.  Let $\phi : A_0 \to B_0$ be the function which takes a polynomial $p(x)$ viewed as a function on $X$ and sends it to the same polynomial $p(x)$ viewed as a function on $Y$, then clearly $\phi$ is an algebra $*$-isomorphism.  However, we see that $\phi$ is not bounded with respect to the norm on $A_0$ inherited from $A$, and the norm on $B_0$ inherited from $B$.  In particular, if we let $p_n(x) = x^n$, then we see that in $A_0$ we have $\| p_n \| = \sup \{ x^n : x \in [0,1] \} = 1$, while in $B_0$ we have $\| \phi (p_n) \| = \sup \{ x^n : x \in [0,2] \} = 2^n$.  Thus it is possible for an algebra $*$-isomorphism between dense $*$-subalgebras of $C^*$-algebras to be unbounded.  We contrast this with the situation for $C^*$-algebras:  It is well-known that if $\phi : A \to B$ is an algebra $*$-homomorphism between $C^*$-algebras, then $ \| \phi \| \leq 1$ (see \cite[Theorem~2.1.7]{Mur}) and it is also known that if $\psi : A \to B$ is an algebra isomorphism between $C^*$-algebras, then $\psi$ is bounded (see \cite[Exercise \#5, Ch.1, p.14]{Dix}).  \end{example}

%----------------------
\subsection{Extending algebra $*$-homomorphisms of Leavitt path algebras to algebra $*$-homomorphisms of graph C$^*$-algebras}
%----------------------

Here we show that the situation for Leavitt path
algebras and graph $C^*$-algebras is exceptional with regards to
the isomorphism properties described above. In
particular, we now show that if $E$
and $F$ are graphs, then $\LC(E) \cong \LC(F)$ (as $*$-algebras) implies that
$C^*(E) \cong C^*(F)$.

\begin{theorem} \label{isos-extend-thm}
Let $E$ and $F$ be graphs.  If $\phi: \LC(E) \to \LC(F)$ is an algebra $*$-homomorphism, then there exists a unique algebra $*$-homomorphism $\overline{\phi} : C^*(E) \to C^*(F)$ making the diagram
\begin{equation*}
\xymatrix{ C^*(E) \ar[r]^{\overline{\phi}} & C^*(F) \\ \LC(E) \ar[u]^{\iota_E} \ar[r]^\phi & \LC(F) \ar[u]_{\iota_F} }
\end{equation*}
commute. Moreover, if $\phi : \LC(E) \to \LC(F)$ is an algebra $*$-isomorphism, then $\overline{\phi} : C^*(E) \to C^*(F)$ is an algebra $*$-isomorphism.
\end{theorem}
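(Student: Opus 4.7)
The plan is to build $\overline{\phi}$ by applying the universal property of $C^*(E)$ to the family obtained by pushing the canonical generators of $L_\C(E)$ through $\phi$ and then into $C^*(F)$. Concretely, let $\{p_v,s_e\}$ be the generating Cuntz--Krieger $E$-family for $C^*(E)$ and, identifying $L_\C(E)$ with $\iota_E(L_\C(E))\subseteq C^*(E)$, define
\[
q_v := \iota_F(\phi(p_v)) \in C^*(F), \qquad t_e := \iota_F(\phi(s_e)) \in C^*(F).
\]
Because $\phi$ is an algebra $*$-homomorphism, the relations $p_v = p_v^2 = p_v^*$, mutual orthogonality of the $p_v$'s, the partial isometry identities for $s_e$, and the Cuntz--Krieger relations (1)--(3) are preserved; since $\iota_F$ is also an algebra $*$-homomorphism, the family $\{q_v,t_e\}$ is a Cuntz--Krieger $E$-family in the $C^*$-algebra $C^*(F)$. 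The universal property of $C^*(E)$ (Definition \ref{graph-C*-def}) then yields a unique algebra $*$-homomorphism $\overline{\phi}:C^*(E)\to C^*(F)$ with $\overline{\phi}(p_v)=q_v$ and $\overline{\phi}(s_e)=t_e$.

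Next I would verify commutativity of the diagram. Both $\overline{\phi}\circ\iota_E$ and $\iota_F\circ\phi$ are algebra $*$-homomorphisms $L_\C(E)\to C^*(F)$, and by construction they agree on the generating set $\{v,e:v\in E^0,e\in E^1\}$ of $L_\C(E)$; hence they agree on the $*$-subalgebra they generate, which is all of $L_\C(E)$. For uniqueness of the extension, suppose $\psi:C^*(E)\to C^*(F)$ is another algebra $*$-homomorphism with $\psi\circ\iota_E = \iota_F\circ\phi$. Then $\psi$ and $\overline{\phi}$ agree on $\iota_E(L_\C(E))$, which by Proposition \ref{embedding-prop} is dense in $C^*(E)$. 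Since any algebra $*$-homomorphism between $C^*$-algebras is norm-decreasing (hence continuous), $\psi=\overline{\phi}$ on all of $C^*(E)$.

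For the moreover statement, assume $\phi$ is an algebra $*$-isomorphism. Apply the construction just carried out to $\phi^{-1}:L_\C(F)\to L_\C(E)$ to produce an algebra $*$-homomorphism $\overline{\phi^{-1}}:C^*(F)\to C^*(E)$ with $\overline{\phi^{-1}}\circ\iota_F = \iota_E\circ\phi^{-1}$. The composition $\overline{\phi^{-1}}\circ\overline{\phi}:C^*(E)\to C^*(E)$ is an algebra $*$-homomorphism which, on the generators $p_v$ and $s_e$ (equivalently, on $\iota_E(L_\C(E))$), agrees with the identity map. By continuity plus density of $\iota_E(L_\C(E))$ in $C^*(E)$, the uniqueness clause (applied with $\phi=\operatorname{id}_{L_\C(E)}$) forces $\overline{\phi^{-1}}\circ\overline{\phi} = \operatorname{id}_{C^*(E)}$, and symmetrically $\overline{\phi}\circ\overline{\phi^{-1}} = \operatorname{id}_{C^*(F)}$. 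Hence $\overline{\phi}$ is an algebra $*$-isomorphism.

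I do not expect any genuine obstacle here: the key structural fact has already been done, namely the Graded Uniqueness Theorem input to Proposition \ref{embedding-prop} that identifies $L_\C(E)$ with a dense $*$-subalgebra of $C^*(E)$. The only mild subtlety is notational, reflecting the distinction between the abstract generators of $L_\C(E)$ and their images in $C^*(E)$ under $\iota_E$; once that identification is made, everything reduces to invoking the universal property of $C^*(E)$ and continuity of $*$-homomorphisms between $C^*$-algebras. The non-unital case (when $E^0$ or $F^0$ is infinite) requires no modification, since the universal property in Definition \ref{graph-C*-def} does not demand morphisms to be unital.
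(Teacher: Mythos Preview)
Your proof is correct and follows essentially the same approach as the paper: both construct $\overline{\phi}$ by applying the universal property of $C^*(E)$ to the Cuntz--Krieger $E$-family $\{\iota_F(\phi(v)),\iota_F(\phi(e))\}$ in $C^*(F)$, verify commutativity on generators, and handle the isomorphism case by running the construction on $\phi^{-1}$ and checking the two compositions are identities. The only cosmetic difference is that the paper phrases uniqueness as ``agreeing on generators of $C^*(E)$'' (i.e., the uniqueness clause of the universal property), whereas you invoke density of $\iota_E(L_\C(E))$ plus continuity of $*$-homomorphisms; these are equivalent here.
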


\begin{proof}
Let $\{ s_e, p_v : e\in E^1, v \in E^0\}$ be a generating
Cuntz-Krieger $E$-family in $C^*(E)$, and let $\{ t_e, q_v : e \in
F^1, v \in F^0 \}$ be a generating Cuntz-Krieger $F$-family in
$C^*(F)$.  Given $\phi: \LC(E) \to \LC(F)$, we see that $\{
\iota_F(\phi (e)), \iota_F(\phi(v)) : e \in E^1, v \in E^0\}$  is
a Cuntz-Krieger $E$-family in $C^*(F)$.  Hence, by the universal
property of $C^*(E)$, there exists an algebra $*$-ho\-mo\-morph\-ism $\overline{\phi}
: C^*(E) \to C^*(F)$ with $\overline{\phi} (p_v) =  \iota_F(\phi(v))$ and
$\overline{\phi}(s_e) =  \iota_F(\phi (e))$ for all $v \in E^0, e \in E^1$.
It is easy to see that $\overline{\phi} \circ \iota_E = \iota_F
\circ \phi$, since the maps on either side of this equation agree
on elements of the form $\lambda \alpha \beta^*$ and these generate $\LC(E)$ as a $\C$-algebra.  (Here we are using the fact that $\phi$ is a $*$-homomorphism.)   Furthermore, $\overline{\phi}$ is
unique because any other such algebra $*$-homomorphism would agree with
$\overline{\phi}$ on the generators of $C^*(E)$ and hence be equal
to $\overline{\phi}$.

In addition, if $\phi$ is an algebra $*$-isomorphism, then $$\{
\iota_E(\phi^{-1}(f)), \iota_E(\phi^{-1}(w)): f \in F^1, w \in F^0
\}$$ is a Cuntz-Krieger $F$-family in $C^*(E)$, and hence by the
universal property of $C^*(F)$ there exists an algebra $*$-ho\-mo\-morph\-ism $\rho :
C^*(F) \to C^*(E)$ with $\rho (q_w) =  \iota_E(\phi^{-1}(w))$ and
$\rho(t_f) =  \iota_E(\phi^{-1}(f))$ for all $w \in F^0, f \in F^1$.
One can easily see that $\overline{\phi} \circ \rho =
\textnormal{Id}_{C^*(F)}$ and $\rho \circ \overline{\phi} =
\textnormal{Id}_{C^*(E)}$ by checking on generators.  Thus $\overline{\phi}$
is an algebra $*$-isomorphism.
\end{proof}

\begin{corollary} \label{iso-implication}
Let $E$ and $F$ be graphs.  Then $\LC(E) \cong \LC(F)$ (as $*$-algebras) implies that $C^*(E) \cong C^*(F)$ (as $*$-algebras).
\end{corollary}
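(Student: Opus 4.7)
The plan is to deduce this directly from Theorem~\ref{isos-extend-thm}. Given the hypothesis that $\LC(E) \cong \LC(F)$ as $*$-algebras, I would fix an algebra $*$-isomorphism $\phi: \LC(E) \to \LC(F)$. The ``moreover'' clause of Theorem~\ref{isos-extend-thm} immediately supplies an algebra $*$-isomorphism $\overline{\phi}: C^*(E) \to C^*(F)$, and the conclusion follows at once.

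There is essentially nothing further to do, because all of the substantive work has been carried out in Theorem~\ref{isos-extend-thm}. Recall that in that theorem one constructs $\overline{\phi}$ by taking the images $\{\iota_F(\phi(v)), \iota_F(\phi(e))\}$ of the generators of $\LC(E)$ inside $C^*(F)$, verifying that this collection forms a Cuntz-Krieger $E$-family, and then invoking the universal property of $C^*(E)$. The inverse $\overline{\phi}^{-1}$ is produced symmetrically from $\phi^{-1}$, and the compositions can be checked to be the identity on generators of the respective graph $C^*$-algebras.

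The only point worth emphasizing is conceptual rather than technical: the hypothesis supplies a $*$-algebra isomorphism, not merely an algebra or ring isomorphism, and this $*$-preservation is precisely what allows $\{\iota_F(\phi(v)), \iota_F(\phi(e))\}$ to be recognized as a bona fide Cuntz-Krieger $E$-family in $C^*(F)$ (one needs that the images of the ghost edges are the adjoints of the images of the real edges in order for the Cuntz-Krieger relations involving $s_e^*$ to hold). Without the $*$-hypothesis, the argument of Theorem~\ref{isos-extend-thm} breaks down at this very first step, and indeed Examples~\ref{dens-*-subalg-Ex-1}--\ref{dens-*-subalg-Ex-3} show that in the general setting of dense $*$-subalgebras no such extension need exist. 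Thus the main ``obstacle'' is really the one already overcome in Theorem~\ref{isos-extend-thm}, and the corollary is simply the consequence of reading off that theorem without the commutative-diagram adornment. In the write-up I would present this as a one-line proof: apply Theorem~\ref{isos-extend-thm} to an algebra $*$-isomorphism $\phi: \LC(E)\to \LC(F)$, obtaining an algebra $*$-isomorphism $\overline{\phi}:C^*(E)\to C^*(F)$.
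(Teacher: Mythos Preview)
Your proposal is correct and matches the paper's approach exactly: the corollary is stated immediately after Theorem~\ref{isos-extend-thm} with no separate proof, as it is an immediate consequence of the ``moreover'' clause of that theorem. Your one-line write-up is precisely what the paper intends.
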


We mention that Corollary~\ref{iso-implication} allows us to obtain \cite[Theorem~5.1]{AAP} directly from \cite[Theorem~4.14]{AAP}.

%%%%%%%%%%%%%%%%%%%%%%%%%%%%%%%%%%%%%%%%%%%%%%%%%%%%%%
\subsection{$*$-homomorphisms that are algebraic} \label{Alg-hom-iso-sec}
%%%%%%%%%%%%%%%%%%%%%%%%%%%%%%%%%%%%%%%%%%%%%%%%%%%%%%

\begin{definition}
If $\phi : C^*(E) \to C^*(F)$ is an algebra $*$-homomorphism, we say $\phi$ is \emph{algebraic} if $\phi (\iota_E(\LC(E))) \subseteq \iota_F(\LC(F))$.  (Note that implicitly this requires a choice of the generators for $C^*(E)$ and $C^*(F)$, since the maps $\iota_E$ and $\iota_F$ depend on these choices.)
\end{definition}

\begin{remark}
Let $E$ and $F$ be graphs, and suppose that $\{ s_e, p_v : e \in
E^1, v \in E^0 \}$ is a chosen generating Cuntz-Krieger $E$-family in
$C^*(E)$ and $\{ t_f, q_w : f \in F^1, w \in F^0 \}$ is a chosen
generating Cuntz-Krieger $F$-family in $C^*(F)$.  If $\phi :
C^*(E) \to C^*(F)$ is an algebra $*$-homomorphism, then $\phi$ is algebraic with respect to these choices if
and only if for each $v \in E^0$ and each $e\in E^1$ we have that
every $\phi(p_v)$ and every $\phi(s_e)$ is  equal to a finite
linear combination of finite products of elements of $\{ t_f, q_w
: f \in F^1, w \in F^0 \}$.
\end{remark}

For any two $*$-algebras $A$ and $B$ we will let $\Hom (A,B)$ denote the set of algebra $*$-homomorphisms from $A$ to $B$ and let $\Iso (A,B)$ denote the set of algebra $*$-isomorphisms from $A$ to $B$. Also, if $E$ and $F$ are graphs we define $\Homalg (C^*(E),  C^*(F))$
to be the subset of $\Hom (C^*(E), C^*(F))$ consisting of the
algebra $*$-homomorphisms from $C^*(E)$ to
$C^*(F)$ that are algebraic (with respect to the sets  $\{ s_e, p_v : e \in
E^1, v \in E^0 \}$ and $\{ t_f, q_w : f \in F^1, w \in F^0 \}$).   Theorem~\ref{isos-extend-thm} shows that there is a map $$\Psi : \Hom (\LC(E), \LC(F)) \to \Hom (C^*(E), C^*(F))$$ given by $\Psi(\phi) = \overline{\phi}$, so the image of $\Psi$ is contained in $\Homalg (C^*(E),  C^*(F))$.  In addition, if we define $\Isoalg (C^*(E),  C^*(F))$ to be the subcollection of $\Iso (C^*(E), C^*(F))$ consisting of the algebra $*$-isomorphisms from $C^*(E)$ to $C^*(F)$ that are algebraic, then
Theorem~\ref{isos-extend-thm} shows that $\Psi$ restricts to a map
$$\Psi| : \Iso(\LC(E), \LC(F)) \to \Iso (C^*(E), C^*(F)),$$ and that the image of $\Psi|$ is contained in $\Isoalg (C^*(E),  C^*(F))$.

In general the map $\Psi|$ is not surjective (and, furthermore,
$\Psi$ is also not surjective).  The following example shows this.

\begin{example}
Let $E$ be the graph
\begin{equation*}
\xymatrix{ v_0 \ar[r]^{e_1} & v_1 \ar[r]^{e_2} & v_2 \ar[r]^{e_3} & v_3
\ar[r]^{e_4} & \cdots\\ }
\end{equation*}
Then $C^*(E) \cong \K(H)$ for a separable infinite-dimensional
Hilbert space $H$.  (To see this, note that if we define $$e_{ij} := \begin{cases} S_i S_{i+1} \ldots S_{j-1} & \text{ if $i < j$} \\ S_iS_i^* & \text{ if $i=j$} \\ S_{i-1}^* S_{i-2}^* \ldots S_j^* & \text{ if $i > j$,} \end{cases}$$
then $\{ e_{ij} \}_{i,j \in \N}$ is an infinite set of matrix units generating $C^*(E)$.)

 Let $\{\xi_1, \xi_2,
\ldots \}$ be an orthonormal basis for $H$.  (We recall for our algebraist readers that this means that the elements in this set are orthonormal and span a dense subset of $H$.)  For $i\geq 0$ we let
$P_{v_i}$ be the projection onto $\algspan \{\xi_i\}$, and for
$i\geq 1$ we let $S_{e_i}$ be the partial isometry with initial
space $\algspan \{\xi_i\}$ and final space $\algspan \{\xi_{i-1}
\}$. Then $\{ S_e, P_v : v \in E^0, e \in E^1 \}$ is a universal
Cuntz-Krieger $E$-family generating $C^*(E) \cong \K (H)$.

Let $\eta_0 := \sum_{n=1}^\infty \frac{1}{2^n} \xi_n \in H$, and extend $\{ \eta_0 \}$ to an orthonormal basis $\{ \eta_0, \eta_1, \ldots \}$ for $H$.  For $i\geq 0$ define $Q_{v_i}$ to be
the projection onto $\algspan \{\eta_i\}$, and for $i\geq 1$ let
$T_{e_i}$ be the partial isometry with initial space $\algspan
\{\eta_i\}$ and final space $\algspan \{\eta_{i-1} \}$. Then $\{
T_e, Q_v : v \in E^0, e \in E^1 \}$ is a Cuntz-Krieger $E$-family
generating $C^*(E) \cong \K (H)$. Hence there exists an algebra
$*$-homomorphism $\phi : C^*(E) \to C^*(E)$ with $\phi(P_v) = Q_v$ and
$\phi(S_e) = T_e$. Since $\{ T_e, Q_v : v \in E^0, e \in E^1 \}$
generates $\K (H)$, we see that $\phi$ is surjective, and since
$\K(H)$ is simple, $\phi$ is injective. Thus $\phi$ is an algebra
$*$-isomorphism.  Furthermore, $\phi (P_{v_0}) \eta_0 = Q_{v_0} \eta_0 = \eta_0=
\sum_{n=1}^\infty \frac{1}{2^n} \xi_n$. But if $T$ is any finite
$\C$-linear combination of products of the elements of $\{ S_e, P_v : v
\in E^0, e \in E^1 \}$, then $T \eta_0$ is equal to a finite
linear combination of $\xi_n$'s. Thus $T \eta_0 \neq \eta_0$, and
it follows that $\phi (P_{v_0})$ is not a finite $\C$-linear
combination of the elements of $\{ S_e, P_v : v \in E^0, e \in E^1
\}$.  Hence $\phi$ is an algebra $*$-isomorphism that is not algebraic with respect to the set $\{ S_e, P_v : v \in E^0, e \in E^1\}$.
\end{example}

\begin{example}\label{anotherexampleofnotalgebraic}
Here is another example of a $*$-isomorphism that is not algebraic, and which has the advantage of
coming from a finite graph. Let $E$ be the graph with one vertex and one edge. Then $\LC(E) =
\C[x, x^{-1} ]$, the algebra of Laurent polynomials, and $C^*(E) = C(\T)$, the algebra of
continuous functions on the unit circle $\T$.  Then the map $\iota_E : \LC(E) \to C^*(E)$ is the obvious inclusion. For every $g \in C(\T)$ such that $|g(z )| = 1$ for all $z \in \T$ , there is a $C^*$ -algebra homomorphism $\phi : \C (\T) \to C (\T)$ such that $\phi$ sends the identity function $z \mapsto z$ on $\T$ to the function $g$. Moreover, $\phi$ is an isomorphism whenever $g$ is injective. There are many such choices of $g$ that are
not in $\C[x , x^{-1} ]$.  (For example, we could choose $g : \T \to \C$ defined by $g(e^{2\pi i t}) = e^{2\pi i t^2}$, which satisfies the hypotheses but is not analytic and hence not in $\C[x , x^{-1} ]$.)
\end{example}

 Despite the fact that an algebra $*$-isomorphism between two graph $C^*$-algebras need not be algebraic
(and therefore need not restrict to an algebra $*$-isomorphism between $\LC(E)$ and $\LC(F)$), there are a certain situations, which we discuss in \S\ref{converses-sec}, when $C^*(E) \cong C^*(F)$ (as $*$-algebras) implies $\LC(E) \cong \LC(F)$ (as $*$-algebras).

%%%%%%%%%%%%%%%%%%%%%%%%%%%%%%%%%%%%%%%%%%%%%%%%%%%%%%
\section{Algebra homomorphisms of graph algebras } \label{alg-and-*-alg-hom-sec}
%%%%%%%%%%%%%%%%%%%%%%%%%%%%%%%%%%%%%%%%%%%%%%%%%%%%%%

In this section we consider algebra homomorphisms between graph algebras that are not necessarily $*$-preserving.  We saw in Example~\ref{dens-*-subalg-Ex-3} that an algebra isomorphism between dense $*$-subalgebras of $C^*$-algebras need not be bounded.  This can also occur when the dense $*$-subalgebras are  Leavitt path algebras.  Let $E$ be the graph
$ $

$$
\xymatrix{
\bullet \ar@(ur,ul) \\
}
$$

\noindent consisting of a single vertex and a single edge.
 As noted in Example \ref{anotherexampleofnotalgebraic}, $L_\C(E) \cong \C[x,x^{-1}]$. If we let $p_v :=1$, $s_e:= 2x$, and $s_{e^*} := \frac{1}{2} x^{-1}$, then this collection forms a Leavitt $E$-family, and by the universal property of Leavitt path algebras there is an algebra homomorphism $\psi : L_\C (E) \to L_\C (E)$ with $\psi(1) = 1$, $\psi (x) = 2x$, and $\psi(x^{-1}) = \frac{1}{2} x^{-1}$.  One can see that $\psi$ is one-to-one and onto, and hence $\psi$ is an algebra isomorphism.  In addition, we have that the elements $x^n$ all have norm one, however $\psi(x^n) = 2^nx^n$ has norm $2^n$.  Hence $\psi$ is an algebra isomorphism between Leavitt path algebras that is not bounded.

Nonetheless, the following proposition shows that we are able to scale any algebra automorphism of $\C[x,x^{-1}]$ to obtain an algebra $*$-automorphism.

\begin{proposition}\label{Laurent-scaling}
If $\psi : \C[x,x^{-1}] \to \C[x,x^{-1}]$ is an algebra isomorphism, then there exists $z \in \C^\times$ and an algebra $*$-isomorphism $\phi  : \C[x,x^{-1}] \to \C[x,x^{-1}]$ such that $$\psi = \phi \circ \gamma_z$$ where $\gamma_z$ is the scaling automorphism corresponding to $z$.
\end{proposition}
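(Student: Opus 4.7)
The plan is to exploit the very restrictive structure of $\C[x,x^{-1}]$ in order to pin down the possible form of $\psi(x)$, and then to build both the scalar $z$ and the $*$-automorphism $\phi$ directly from that data.

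First I would identify the units of $\C[x,x^{-1}]$. Viewing this ring as the localization of the polynomial ring $\C[x]$ at $\{x^n : n \geq 0\}$, one checks that the group of units is exactly $\{\lambda x^n : \lambda \in \C^\times,\ n \in \Z\}$. Since $x$ is a unit and $\psi$ is an isomorphism, $\psi(x)$ must itself be a unit, so $\psi(x) = \lambda x^n$ for some $\lambda \in \C^\times$ and $n \in \Z$. Writing $\psi^{-1}(x) = \mu x^m$ similarly, the equation $x = \psi(\psi^{-1}(x)) = \mu \lambda^m x^{nm}$ forces $nm=1$, so that $n = \epsilon$ for some $\epsilon \in \{+1,-1\}$. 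Hence $\psi(x)=\lambda x^\epsilon$.

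Next I would construct the desired factorization. Set $z := \lambda \in \C^\times$, so that $\gamma_z$ sends $x$ to $\lambda x$ and $x^{-1}$ to $\lambda^{-1}x^{-1}$. By the universal property of $L_\C(E) \cong \C[x,x^{-1}]$ recorded in Remark~\ref{LPA-univ-rem}, there is a unique algebra homomorphism $\phi$ with $\phi(x) = x^\epsilon$ (and hence $\phi(x^{-1}) = x^{-\epsilon}$); this $\phi$ is visibly invertible, as its inverse is determined the same way. To see that $\phi$ is an algebra $*$-isomorphism, recall that under the identification with $L_\C(E)$ the involution satisfies $x^* = x^{-1}$. Therefore $\phi(x^*) = \phi(x^{-1}) = x^{-\epsilon} = (x^\epsilon)^* = \phi(x)^*$, and by $\C$-linearity and multiplicativity $\phi$ preserves the involution on all of $\C[x,x^{-1}]$.

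Finally, I would verify $\psi = \phi \circ \gamma_z$ on the single generator $x$, namely
\[
(\phi \circ \gamma_z)(x) = \phi(\lambda x) = \lambda \phi(x) = \lambda x^\epsilon = \psi(x).
\]
Since $\C[x,x^{-1}]$ is generated as a $\C$-algebra by $x$ (with $x^{-1}$ then forced), agreement on $x$ propagates to all elements. There is no real obstacle in this argument; the only substantive step is the unit computation at the outset, and the remainder of the proof is a direct construction.
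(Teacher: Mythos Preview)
Your proof is correct and follows essentially the same route as the paper's: identify the units of $\C[x,x^{-1}]$ to conclude $\psi(x)=\lambda x^{\pm 1}$, then peel off the scalar via a scaling automorphism to leave a $*$-preserving map. The only cosmetic difference is that you define $\phi$ directly by $\phi(x)=x^{\epsilon}$, whereas the paper writes $\phi$ as the composite $\psi\circ\gamma_a$ and then takes $z=1/a$.
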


\begin{proof}
Since $\psi(x) \psi(x^{-1}) = \psi(1) = 1$, we have that $\psi(x)$ is a unit of $\C[x,x^{-1}]$.  However, the only units of $\C[x,x^{-1}]$ are $ax^k$ with $a \in \C^\times$ and $k \in \Z$. (To see this, use the graded structure of this ring and the fact that the ring is a domain.)  Thus we must have $\psi(x) = ax^k$ for some $a \in \C^\times$ and $k \in \Z$, and also $\psi(x^{-1}) = \frac{1}{a} x^{-k}$.  In addition, for any polynomial $p(x, x^{-1})$ in $\C[x,x^{-1}]$, we have that $\psi (p(x, x^{-1})) = p (ax^k, \frac{1}{a} x^{-k})$.  Since $\psi$ is onto, it must be the case that $k = \pm 1$.

If we let $\phi := \psi \circ \gamma_a$, then $\phi$ is an algebra isomorphism (since it is the composition of algebra isomorphisms), and $\phi$ is $*$-preserving (simply check on the generators 1, $x$, and $x^{-1}$).  If we let $z := \frac{1}{a}$, then we see that $\psi = \phi \circ \gamma_z$.
\end{proof}

We are also able to characterize when an algebra homomorphism between complex Leavitt path algebras is an algebra $*$-homomorphism.

\begin{lemma} \label{contract-lem}
Let $U$, $V$, and $P$ be elements in a $C^*$-algebra.  Suppose that $P$ is a projection, and $U$ and $V$ are contractions with $UV = P$.  Then $VP$ is a partial isometry with $(VP)^* (VP) = P$.
\end{lemma}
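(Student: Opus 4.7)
The plan is to reduce everything to showing $(VP)^{*}(VP) = P$, since once that equation holds the two partial isometry relations follow immediately by left- and right-multiplying by $VP$ and $(VP)^{*}$ and using $P^{2}=P$.

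First I would compute $(VP)^{*}(VP) = P^{*}V^{*}VP = PV^{*}VP$, so the task becomes proving $PV^{*}VP = P$. The key observation is that the hypothesis $UV = P$ together with $P=P^{*}=P^{2}$ gives
\begin{equation*}
P \;=\; P^{*}P \;=\; (UV)^{*}(UV) \;=\; V^{*}U^{*}UV.
\end{equation*}
Since $U$ is a contraction we have $U^{*}U \leq 1$, and consequently $V^{*}U^{*}UV \leq V^{*}V$; that is, $P \leq V^{*}V$. Since $V$ is also a contraction, $V^{*}V \leq 1$, so altogether $P \leq V^{*}V \leq 1$.

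Now I would sandwich by $P$ on both sides. Multiplying the inequality $P \leq V^{*}V$ on the left and right by the positive element $P$ yields
\begin{equation*}
P \;=\; P^{3} \;=\; P\cdot P\cdot P \;\leq\; PV^{*}VP,
\end{equation*}
while multiplying $V^{*}V \leq 1$ on the left and right by $P$ yields
\begin{equation*}
PV^{*}VP \;\leq\; P\cdot 1 \cdot P \;=\; P.
\end{equation*}
Combining these two inequalities gives $PV^{*}VP = P$, i.e., $(VP)^{*}(VP) = P$.

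Finally, to finish the statement that $VP$ is a partial isometry, I would verify
\begin{equation*}
(VP)(VP)^{*}(VP) \;=\; VP \cdot P \;=\; VP^{2} \;=\; VP,
\end{equation*}
and, taking adjoints (or by the analogous computation),
\begin{equation*}
(VP)^{*}(VP)(VP)^{*} \;=\; P \cdot PV^{*} \;=\; P^{2}V^{*} \;=\; PV^{*} \;=\; (VP)^{*}.
\end{equation*}
The only non-routine step is the sandwich argument, and it is essentially immediate once one notices that contractivity of $U$ forces $P \leq V^{*}V$; I do not anticipate any real obstacle.
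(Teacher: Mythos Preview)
Your proof is correct, and it takes a genuinely different route from the paper's argument. The paper represents the $C^*$-algebra concretely on a Hilbert space $H$ and argues geometrically: for $x \in \operatorname{im} P$ one has $\|x\| = \|P x\| = \|UVx\| \leq \|Vx\| \leq \|x\|$, so $VP$ acts isometrically on $\operatorname{im} P$ and vanishes on its orthogonal complement, hence is a partial isometry with initial projection $P$. Your argument, by contrast, stays entirely within the abstract $C^*$-algebra and uses the order structure: from $P = V^{*}U^{*}UV$ and $U^{*}U \leq 1$ you extract $P \leq V^{*}V \leq 1$, and then compressing by $P$ squeezes $PV^{*}VP$ between $P$ and $P$. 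Your approach avoids invoking a faithful representation and is arguably more elementary for readers comfortable with the $C^*$-order; the paper's approach is perhaps more vivid for those who think operator-theoretically. Either way the only substantive fact used is that contractions satisfy $U^{*}U \leq 1$, so the two arguments are really two sides of the same coin.
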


\begin{proof}
Without loss of generality, we may assume that $U$, $V$, and $P$ are operators on a Hilbert space $H$.  For any $x \in \im P$, the fact that $U$ and $V$ are contractions implies that $$ \| x \| \leq \| P (x) \| = \| UV (x) \| \leq \| V (x) \| \leq \| x \|$$ so that $\| V(x) \| = \| x \|$ and $\| VP (x) \| = \| x \|$.  In addition, if $x \in ( \im P )^\perp$, then $VP (x) = V (Px) = 0$.  Consequently, $VP$ is a partial isometry with initial space $\im P$.  It follows that $(VP)^*(VP) = P$.
\end{proof}

\begin{proposition} \label{*-sufficient-prop}
If $\psi : L_\C(E) \to L_\C(F)$ is an algebra homomorphism between complex Leavitt path algebras, then $\psi$ is an algebra $*$-homomorphism if and only if the following two conditions are satisfied:
\begin{itemize}
\item $\| \psi (s_e) \| \leq 1$ and $\| \psi (s_e^*) \| \leq 1$ for all $e \in E^1$, and
\item $\| \psi (p_v) \| \leq 1$ for each $v \in E^0$ that is a source.
\end{itemize}
\end{proposition}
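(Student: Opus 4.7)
The statement is an iff, so I plan to handle the two directions separately. The easy direction is necessity: if $\psi:L_\C(E)\to L_\C(F)$ is an algebra $*$-homomorphism, then by Theorem~\ref{isos-extend-thm} it extends to an algebra $*$-homomorphism $\overline{\psi}:C^*(E)\to C^*(F)$, which is automatically norm-decreasing since every $*$-homomorphism of $C^*$-algebras is contractive. The generators $p_v$, $s_e$, and $s_e^*$ of $L_\C(E)\subseteq C^*(E)$ all have norm at most $1$, so their images under $\psi$ satisfy both bulleted norm bounds.

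For the converse, suppose the norm conditions hold and write $Q_v:=\psi(p_v)$, $T_e:=\psi(s_e)$, and $T_e':=\psi(s_e^*)$. Since $\psi$ is an algebra homomorphism, these elements satisfy the five Leavitt $E$-family relations algebraically; hence $\psi$ will be $*$-preserving as soon as I verify $Q_v^*=Q_v$ for every $v\in E^0$ and $T_e^*=T_e'$ for every $e\in E^1$, since $*$ and $\psi$ will then agree on the generators $\{p_v,s_e,s_e^*\}$ and hence on all of $L_\C(E)$ (using that $\psi$ is a ring homomorphism and that the involution is an anti-homomorphism).

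The core of the proof is a four-step argument. \emph{Step 1:} each $Q_v$ is a projection, invoking the classical fact that an idempotent of norm at most $1$ in a $C^*$-algebra is automatically self-adjoint. For a source $v$ this is immediate from the hypothesis $\|Q_v\|\le 1$; for a non-source $v$, pick any edge $e$ with $r(e)=v$ and use the Leavitt relation $s_e^*s_e=p_{r(e)}$ to write $Q_v=T_e'T_e$, whose norm is at most $\|T_e'\|\cdot\|T_e\|\le 1$. \emph{Step 2:} each $T_e$ is a partial isometry with $T_e^*T_e=Q_{r(e)}$; apply Lemma~\ref{contract-lem} with $U=T_e'$, $V=T_e$, and $P=T_e'T_e=Q_{r(e)}$ (contractions by hypothesis, and $P$ a projection by Step~1), obtaining $T_e=T_eQ_{r(e)}=VP$ as a partial isometry with $T_e^*T_e=Q_{r(e)}$. \emph{Step 3:} $T_eT_e^*=T_eT_e'$; here $T_eT_e^*$ is the range projection of the partial isometry $T_e$, while $T_eT_e'$ is another projection (idempotent of norm at most $1$), and using $T_e^*T_e=Q_{r(e)}=T_e'T_e$ together with $T_eQ_{r(e)}=T_e$ and $Q_{r(e)}T_e'=T_e'$ one checks directly that $T_eT_e^*\cdot T_eT_e'=T_eT_e'$ and $T_eT_e'\cdot T_eT_e^*=T_eT_e^*$, so the two projections dominate each other in the Murray--von Neumann order and must coincide. \emph{Step 4:} $T_e^*=T_e'$; a second application of Lemma~\ref{contract-lem}, with $U=T_e$, $V=T_e'$, $P=T_eT_e'$, gives $(T_e')^*T_e'=T_eT_e'$, and expanding $(T_e^*-T_e')^*(T_e^*-T_e')$ and substituting this identity together with $(T_e')^*T_e^*=(T_eT_e')^*=T_eT_e'$ collapses the expression to $T_eT_e^*-T_eT_e'=0$ by Step~3; the $C^*$-identity $\|x\|^2=\|x^*x\|$ then forces $T_e^*=T_e'$.

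The main obstacle I expect is Step~3, the identification $T_eT_e^*=T_eT_e'$. The purely algebraic data given by $\psi$ provides a candidate $T_e'$ to play the role of $T_e^*$, while the $C^*$-structure provides the genuine range projection $T_eT_e^*$, but nothing formal forces these two projections to agree; it is precisely the contractivity hypotheses on $T_e$ and $T_e'$ that ensure $T_eT_e'$ is a projection and therefore comparable to $T_eT_e^*$. Once this identification is in hand, Step~4 is a routine $C^*$-algebraic calculation and the promotion of $*$-preservation from generators to all of $L_\C(E)$ is automatic.
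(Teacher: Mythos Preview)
Your proof is correct and follows the same overall strategy as the paper: use the norm hypotheses, the ``contractive idempotent is a projection'' principle, and Lemma~\ref{contract-lem} to upgrade the algebraic Leavitt $E$-family $\{Q_v,T_e,T_e'\}$ to an honest Cuntz--Krieger family, so that $*$-preservation holds on generators and hence everywhere.

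The tactical execution differs. You invoke Lemma~\ref{contract-lem} twice (once on $T_e'T_e=Q_{r(e)}$ in Step~2 to get $T_e^*T_e=Q_{r(e)}$, and again on $T_eT_e'$ in Step~4 to get $(T_e')^*T_e'=T_eT_e'$), with a projection-comparison argument (Step~3) sandwiched between them and a $C^*$-norm expansion of $(T_e^*-T_e')^*(T_e^*-T_e')$ to finish. The paper instead applies Lemma~\ref{contract-lem} a single time, but to the \emph{adjoint} equation $(T_e')^*\,T_e^*=T_eT_e'$: since $T_e^*\cdot(T_eT_e')=(T_eT_e'\cdot T_e)^*=(T_eQ_{r(e)})^*=T_e^*$ already holds algebraically, the lemma yields $T_eT_e^*=T_eT_e'$ directly (your Step~3 conclusion). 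From there the paper finishes in one line:
\[
T_e^*=Q_{r(e)}\,T_e^*=(T_e'T_e)T_e^*=T_e'(T_eT_e^*)=T_e'(T_eT_e')=T_e',
\]
bypassing your Step~2, the second lemma application, and the norm expansion. Your route is a bit longer but perfectly sound; the paper's shortcut is simply the observation that feeding the adjoint of $T_eT_e'=\psi(s_es_e^*)$ into Lemma~\ref{contract-lem} lands immediately on the key identity $T_eT_e^*=T_eT_e'$.
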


\begin{proof}
To see that the above conditions are necessary, suppose that $\psi : L_\C(E) \to L_\C(F)$ is an algebra $*$-homomorphism.  Then Theorem~\ref{isos-extend-thm} shows that $\psi$ extends to a $*$-homomorphism from $C^*(E)$ to $C^*(F)$.  Hence $\psi$ must be contractive.

To see that the above conditions are sufficient, let $v \in E^0$ and suppose $v$ is not a source.  Then there exists $e \in E^1$ such that $r(e) = v$.  Since $p_v$ is an idempotent and $\psi$ is an algebra homomorphism, we have that $\psi( p_v)$ is an idempotent.  Furthermore, $\| \psi (p_v) \| = \| \psi ( p_{r(e)} ) \| = \| \psi (s_e^*) \psi (s_e) \|  \leq \| \psi(s_e^*) \| \| \psi (s_e) \| \leq 1$ by hypothesis.  Thus $\psi(p_v)$ is a contractive idempotent, and hence a projection \cite[Proposition~3.3]{Co2}.    Likewise, if $v \in E^0$ is a source, then $\psi(p_v)$ is an idempotent, which is contractive by hypothesis, and hence a projection.  We therefore have that $\psi (p_v)$ is a projection for all $v \in E^0$.

Fix $e \in E^1$.  We have that $s_es_e^*$ is an idempotent, and hence $\psi (s_es_e^*)$ is also an idempotent.  Furthermore, $$\| \psi (s_e s_e^*) \| = \| \psi (s_e) \psi (s_e^*) \| \leq \| \psi (s_e) \| \| \psi (s_e^*) \| \leq 1$$ by hypothesis.  Thus $\psi(s_es_e^*)$ is a contractive idempotent, and hence a projection (again using \cite[Proposition~3.3]{Co2}).   Since $\psi (s_e) \psi(s_e^*) = \psi (s_e s_e^*)$, we may take adjoints of each side of this equation to obtain $$\psi (s_e^*)^* \psi(s_e)^* = \psi (s_es_e^*),$$ and because $\psi(s_e)$ and $\psi(s_e^*)$ are contractions by hypothesis, Lemma~\ref{contract-lem} implies that $\psi(s_e)^* \psi(s_es_e^*) = \psi(s_e)^*$ is a partial isometry with
\begin{equation} \label{p-i-eqn-for-psi}
\psi(s_e) \psi(s_e)^* = \psi(s_e s_e^*).
\end{equation}
Thus
\begin{align*}
\psi(s_e)^* &= [\psi (s_e) \psi( p_{r(e)}) ]^* =  \psi( p_{r(e)} ) \psi(s_e)^* \\
&= \psi(s_e^*) \psi (s_e) \psi(s_e)^* = \psi (s_e^*) \psi (s_e s_e^*) \qquad \text{by (\ref{p-i-eqn-for-psi})} \\
&= \psi (s_e^*).
\end{align*}
Hence $\psi(p_v^*) = \psi(p_v)^*$ for all $v \in E^0$, and $\psi(s_e^*) = \psi(s_e)^*$ for all $e \in E^1$.  Since $\{ s_e, p_v : e \in E^1, v \in E^0 \}$ generates $L_\C(E)$ as a $*$-algebra, it follows that $\psi (x^*) = \psi(x)^*$ for all $x \in L_\C(E)$ and $\psi$ is an algebra $*$-homomorphism.
\end{proof}

\begin{corollary} \label{*-iso-norm-one-cor}
An algebra isomorphism $\psi : L_\C(E) \to L_\C(F)$ is an algebra $*$-isomorphism if and only if the following two conditions are satisfied:
\begin{itemize}
\item $\| \psi (s_e) \| = \| \psi (s_e^*) \| = 1$ for all $e \in E^1$, and
\item $\| \psi (p_v) \| = 1$ for each $v \in E^0$ that is a source.
\end{itemize}
\end{corollary}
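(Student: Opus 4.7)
The plan is to deduce Corollary~\ref{*-iso-norm-one-cor} as an almost immediate consequence of Proposition~\ref{*-sufficient-prop}, which already handles the substantive half of the characterization, together with the standard fact that injective $*$-homomorphisms between $C^*$-algebras are isometric.

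For the ``if'' direction, I would simply observe that the equalities $\| \psi(s_e) \| = \| \psi(s_e^*) \| = 1$ and $\| \psi(p_v) \| = 1$ (for $v$ a source) trivially imply the corresponding $\leq 1$ inequalities appearing in the hypotheses of Proposition~\ref{*-sufficient-prop}. That proposition then yields that $\psi$ is an algebra $*$-homomorphism, and since $\psi$ is assumed to be an algebra isomorphism, it is an algebra $*$-isomorphism.

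For the ``only if'' direction, I would argue as follows. Suppose $\psi$ is an algebra $*$-isomorphism. By Theorem~\ref{isos-extend-thm}, $\psi$ extends uniquely to an algebra $*$-isomorphism $\overline{\psi} : C^*(E) \to C^*(F)$. Any injective $*$-homomorphism between $C^*$-algebras is isometric, so $\overline{\psi}$ preserves the $C^*$-norm. Because $\iota_E$ is injective (Proposition~\ref{embedding-prop}), the generators $s_e$ and $p_v$ are nonzero elements of $C^*(E)$; any nonzero partial isometry and any nonzero projection in a $C^*$-algebra has norm $1$, so $\| s_e \| = \| s_e^* \| = 1$ and $\| p_v \| = 1$. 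Applying the isometric extension $\overline{\psi}$ and restricting back to $L_\C(E)$ (on which $\overline{\psi}$ agrees with $\psi$ after identifying $L_\C(E)$ with $\iota_E(L_\C(E)) \subseteq C^*(E)$) then yields the desired equalities.

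There is essentially no serious obstacle here: the difficult direction (deducing $*$-preservation from a norm bound) has already been done in Proposition~\ref{*-sufficient-prop}, and the remaining argument is routine $C^*$-algebraic bookkeeping. The only mild subtlety is making sure that $s_e$ and $p_v$ are indeed nonzero in $C^*(E)$ so that their norms equal $1$, which is precisely the injectivity statement of Proposition~\ref{embedding-prop}.
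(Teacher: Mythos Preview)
Your proposal is correct and follows essentially the same approach as the paper's proof: the paper invokes Proposition~\ref{*-sufficient-prop} for the ``if'' direction and Theorem~\ref{isos-extend-thm} (plus the fact that $*$-isomorphisms between $C^*$-algebras are isometric) for the ``only if'' direction. You have simply filled in a bit more detail than the paper does, in particular the justification that $s_e$, $s_e^*$, and $p_v$ are nonzero and hence have norm~$1$.
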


\begin{proof}
If $\psi : L_\C(E) \to L_\C(F)$ is an algebra $*$-isomorphism, then by Theorem~\ref{isos-extend-thm} $\psi$ extends to an algebra $*$-isomorphism $\overline{\psi} : C^*(E) \to C^*(F)$ between $C^*$-algebras.  Hence $\psi$ is isometric, and the displayed conditions hold.  The converse follows from Proposition~\ref{*-sufficient-prop}.
\end{proof}

\begin{remark}
It is well known that there are algebra isomorphisms between $C^*$-algebras that are not algebra $*$-isomorphisms.  For example, if $W$ is an invertible operator on a Hilbert space $\mathcal{H}$, then the function $\psi : B(\mathcal{H}) \to B(\mathcal{H})$ given by $\psi (X) = WXW^{-1}$ is an algebra isomorphism that is not in general an algebra $*$-isomorphism.  Despite this fact, it was shown by Gardner in 1965 that if $A$ and $B$ are $C^*$-algebras, then $A \cong B$ (as algebras) if and only if $A \cong B$ (as $*$-algebras) \cite{Gar1, Gar2}.  Furthermore, Gardner showed in \cite[Corollary~4.2]{Gar1} that an algebra isomorphism $\psi : A \to B$ is an algebra $*$-isomorphism if and only if $ \| \psi \| = 1$.  (Compare this with Corollary~\ref{*-iso-norm-one-cor}.)
\end{remark}

In view of Gardner's result that for $C^*$-algebras $A$ and $B$ one has that $A \cong B$ (as algebras) implies $A \cong B$ (as $*$-algebras), it is natural to make the following conjecture for complex Leavitt path algebras.

$ $

\noindent \textbf{Conjecture 1:}  If $E$ and $F$ are graphs, then $\LC(E) \cong \LC(F)$ (as algebras) implies that $\LC(E) \cong \LC(F)$ (as $*$-algebras).

$ $

Along these lines, we can also make the following conjecture.

$ $

\noindent \textbf{Conjecture 2:}   If $E$ and $F$ are graphs, then $\LC(E) \cong \LC(F)$ (as algebras) implies that $C^*(E) \cong C^*(F)$ (as $*$-algebras).

$ $

Note that Corollary~\ref{iso-implication} shows that an affirmative answer to Conjecture~1 implies an affirmative answer to Conjecture~2.

Unfortunately, we are unable to make progress on Conjecture~1, and we are also unable to give a complete answer to Conjecture~2.  We offer here some remarks on how one might approach a solution.  Whenever we have an algebra isomorphism $\psi : L_\C(E) \to L_\C(F)$, then if $\psi$ is bounded it will extend (by continuity) to an algebra isomorphism on the completions $\overline{\psi} : C^*(E) \to C^*(F)$, and an application of Gardner's result shows that $C^*(E) \cong C^*(F)$ (as $*$-algebras).  However, as we have seen at the beginning of this section, isomorphisms between complex Leavitt path algebras need not be bounded, and hence need not be continuous, and therefore need not extend to the completion.  With the Laurent polynomials, we saw in Proposition \ref{Laurent-scaling} that any isomorphism ---although not necessarily bounded--- can be composed with a scaling automorphism to produce a bounded isomorphism.  If a similar result holds for general complex Leavitt path algebras, then using the argument of this paragraph, we would have a positive solution to Conjecture~2.
It is unclear to the authors at this time whether such a result holds.

In the next section we consider a conjecture that is stronger than Conjecture~2; i.e., that $\LC(E) \cong \LC(F)$ (as rings) implies $C^*(E) \cong C^*(F)$ (as $*$-algebras).  This conjecture clearly implies Conjecture~2, and remarkably, even though we are unable to make general progress on Conjecture~2, we are able to answer this stronger conjecture in some special cases.

%%%%%%%%%%%%%%%%%%%%%%%%%%%%%%%%%%%%%%%%%%%%%%%%%%%%%%
\section{Ring $*$-homomorphisms, ring homomorphisms,  and the Isomorphism Conjecture for Graph Algebras} \label{Iso-Prob-sec}
%%%%%%%%%%%%%%%%%%%%%%%%%%%%%%%%%%%%%%%%%%%%%%%%%%%%%%

Up to this point we have considered algebra homomorphisms and algebra $*$-homomorphisms between complex Leavitt path algebras.  In the remainder of this paper we will focus on the ring structure of complex Leavitt path algebras.

\begin{proposition} \label{ring-*-hom-replace}
If $\psi : L_\C(E) \to L_\C(F)$ is a ring $*$-homomorphism, then there exists an algebra $*$-homomorphism $\phi : L_\C(E) \to L_\C(F)$ with $\phi (p_v) = \psi(p_v)$ and $\phi(s_e) = \psi(s_e)$ for all $v \in E^0, e \in E^1$.
\end{proposition}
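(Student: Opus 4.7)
The plan is to apply the universal property of $L_\C(E)$ in the category of complex $*$-algebras (stated in the paragraph just after Remark~\ref{LPA-univ-rem}). Specifically, I will show that the images $\{\psi(p_v), \psi(s_e) : v \in E^0, e \in E^1\}$ form a Leavitt $E$-family in $L_\C(F)$ satisfying the $*$-algebra Cuntz-Krieger relations. Once this is verified, the universal property immediately yields a unique algebra $*$-homomorphism $\phi : L_\C(E) \to L_\C(F)$ with $\phi(p_v) = \psi(p_v)$ and $\phi(s_e) = \psi(s_e)$, and this $\phi$ will be $\C$-linear by construction (even though $\psi$ need not be).

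The verification of the four defining relations proceeds entirely by using that $\psi$ preserves addition, multiplication, and involution. First, each $\psi(p_v)$ is a projection because $\psi(p_v)^2 = \psi(p_v^2) = \psi(p_v)$ and $\psi(p_v)^* = \psi(p_v^*) = \psi(p_v)$; pairwise orthogonality follows from $\psi(p_v)\psi(p_w) = \psi(p_v p_w) = \psi(0) = 0$ when $v \neq w$. Second, $\psi(p_{s(e)})\psi(s_e) = \psi(p_{s(e)} s_e) = \psi(s_e)$, and similarly $\psi(s_e)\psi(p_{r(e)}) = \psi(s_e)$. Third, using that $*$ is preserved,
\[
\psi(s_e)^*\psi(s_f) = \psi(s_e^*)\psi(s_f) = \psi(s_e^* s_f) = \psi(\delta_{e,f}\, p_{r(e)}) = \delta_{e,f}\, \psi(p_{r(e)}),
\]
where the last equality holds because $\delta_{e,f} \in \{0,1\}$ is an integer and is therefore preserved by any ring homomorphism. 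Fourth, for any regular vertex $v$, additivity of $\psi$ yields
\[
\psi(p_v) = \psi\!\left(\sum_{\{e:\, s(e)=v\}} s_e s_e^*\right) = \sum_{\{e:\, s(e)=v\}} \psi(s_e)\psi(s_e)^*,
\]
and the sum is finite since $v$ is regular.

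I do not anticipate a significant obstacle; the proposition is essentially a bookkeeping exercise once one invokes the correct universal property. The only subtle point is observing that, although $\psi$ is merely a ring $*$-homomorphism and may fail to be $\C$-linear, all coefficients appearing in the defining Leavitt path algebra relations are integers ($0$, $1$, and the Kronecker $\delta$), so they are automatically respected by $\psi$. This is precisely why scalar linearity of $\psi$ is unnecessary for transferring the relations, and it is what allows us to manufacture the $\C$-linear replacement $\phi$ via the universal property.
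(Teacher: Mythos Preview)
Your proposal is correct and follows essentially the same approach as the paper's own proof: verify that $\{\psi(p_v),\psi(s_e)\}$ satisfies the $*$-algebra Leavitt $E$-family relations, then invoke the universal property of $L_\C(E)$ in the category of complex $*$-algebras to produce $\phi$. The paper's proof is terser (it simply asserts the images form a Leavitt $E$-family and applies the universal property), whereas you spell out each relation and make explicit the key observation that all scalars in the defining relations are integers and hence preserved by a mere ring homomorphism.
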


\begin{proof}
Note that since $\psi$ is a ring $*$-homomorphism, $\{ \psi(p_v), \psi(s_e) : v \in E^0, e \in E^1 \}$ is a Leavitt $E$-family in $L_\C(F)$.  Thus by the universal property of $L_\C(E)$ there exists an algebra $*$-homomorphism $\phi : L_\C(E) \to L_\C(F)$ with $\phi (p_v) = \psi(p_v)$ and $\phi(s_e) = \psi(s_e)$ for all $v \in E^0, e \in E^1$.
\end{proof}

\begin{remark}
Note that even though the maps $\psi$ and $\phi$ of Proposition~\ref{ring-*-hom-replace} agree on the elements $\{ p_v, s_e : v \in E^0, e \in E^1 \}$, the two maps are not necessarily equal.  This is because the Leavitt $E$-family $\{ p_v, s_e : v \in E^0, e \in E^1 \}$ generates $L_\C(E)$ as a $*$-algebra but not as a $*$-ring.  For example, if we take $E$ to be the graph with one vertex and no edges, then $L_\C(E) \cong \C$ and the generating Leavitt $E$-family is the singleton set $\{ 1 \}$.  If we define $\psi : \C \to \C$ by $\psi (z) = \overline{z}$ and $\phi : \C \to \C$ by $\phi(z) = z$, then $\psi$ is a ring $*$-homomorphism and $\phi$ is an algebra $*$-homomorphism with $\psi (1) = \phi(1)$, but we see that $\psi$ and $\phi$ are not equal.
\end{remark}

\begin{corollary} \label{ring-C*-hom-replace}
If $\psi : L_\C(E) \to L_\C(F)$ is a ring $*$-homomorphism, then there exists an algebra $*$-homomorphism $\phi : C^*(E) \to C^*(F)$ with $\phi (p_v) = \psi(p_v)$ and $\phi(s_e) = \psi(s_e)$ for all $v \in E^0, e \in E^1$.
\end{corollary}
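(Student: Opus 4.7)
The plan is to chain together Proposition~\ref{ring-*-hom-replace} and Theorem~\ref{isos-extend-thm}, obtaining the desired extension as a two-step construction. Since a ring $*$-homomorphism need not be $\C$-linear, we cannot invoke Theorem~\ref{isos-extend-thm} directly on $\psi$. However, Proposition~\ref{ring-*-hom-replace} gives us an honest algebra $*$-homomorphism that agrees with $\psi$ on the generating Leavitt $E$-family, which is all that Theorem~\ref{isos-extend-thm} needs.

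Concretely, first I would apply Proposition~\ref{ring-*-hom-replace} to $\psi$ to produce an algebra $*$-homomorphism $\phi_0 : L_\C(E) \to L_\C(F)$ satisfying $\phi_0(p_v) = \psi(p_v)$ for all $v \in E^0$ and $\phi_0(s_e) = \psi(s_e)$ for all $e \in E^1$. Next, I would apply Theorem~\ref{isos-extend-thm} to $\phi_0$ to obtain an algebra $*$-homomorphism $\overline{\phi_0} : C^*(E) \to C^*(F)$ with $\overline{\phi_0} \circ \iota_E = \iota_F \circ \phi_0$. I would then set $\phi := \overline{\phi_0}$.

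It remains to check that $\phi$ agrees with $\psi$ on the generators. Under the identification of $L_\C(E)$ with its image $\iota_E(L_\C(E))$ in $C^*(E)$ (and similarly for $F$), the commutative square from Theorem~\ref{isos-extend-thm} gives
\[
\phi(p_v) = \overline{\phi_0}(\iota_E(p_v)) = \iota_F(\phi_0(p_v)) = \iota_F(\psi(p_v)) = \psi(p_v),
\]
and likewise $\phi(s_e) = \psi(s_e)$ for each $e \in E^1$, as required.

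I do not anticipate any real obstacle here: the only thing to be careful about is the identification of elements of $L_\C(E)$ with their images in $C^*(E)$ via $\iota_E$ (and similarly for $F$), which is the notational convention already established in the remark following Proposition~\ref{embedding-prop}. With that convention in place, both Proposition~\ref{ring-*-hom-replace} and Theorem~\ref{isos-extend-thm} apply verbatim, and the corollary follows in a few lines.
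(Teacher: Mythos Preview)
Your proposal is correct and follows exactly the same approach as the paper, which simply says to apply Proposition~\ref{ring-*-hom-replace} followed by Theorem~\ref{isos-extend-thm}. You have merely unpacked the two-step chaining in more detail, including the verification on generators via the commuting square and the identification through $\iota_E$ and $\iota_F$.
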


\begin{proof}
Apply Proposition~\ref{ring-*-hom-replace} followed by Theorem~\ref{isos-extend-thm}.
\end{proof}

Corollary~\ref{ring-C*-hom-replace} shows that for any ring $*$-homomorphism $\psi : L_\C(E) \to L_\C(F)$ we may find an algebra $*$-homomorphism $\phi : C^*(E) \to C^*(F)$ that agrees with $\psi$ on the generating Leavitt $E$-family.  However, it is unclear if having $\psi$ a ring $*$-isomorphism implies that $\phi$ is an algebra $*$-isomorphism.  (In fact, both injectivity and surjectivity are in question.) If such a result were obtained it would show that $L_\C(E) \cong L_\C(F)$ (as $*$-rings) implies that $C^*(E) \cong C^*(F)$ (as $*$-algebras).  (Compare this with Theorem~\ref{isos-extend-thm} and Corollary~\ref{iso-implication}.)  Although we are unable to obtain this result, we make the following stronger conjecture.

$ $

\noindent \textbf{The Isomorphism Conjecture for Graph Algebras:}   If $E$ and $F$ are graphs, then $\LC(E) \cong \LC(F)$ (as rings) implies that $C^*(E) \cong C^*(F)$ (as $*$-algebras).

$ $

Although we cannot verify weaker versions of the Isomorphism Conjecture in general (e.g., when our hypothesis is $\LC(E) \cong \LC(F)$ (as $*$-rings) or when our hypothesis is $\LC(E) \cong \LC(F)$ (as algebras)), we are able to show that the Isomorphism Conjecture holds for some very important classes of graph algebras.  We show in \S\ref{ultramatricial-sec} that we have an affirmative answer when the graphs have no cycles (so that the associated algebras are ultramatricial).  We also show in  \S\ref{simple-sec} that there is an affirmative answer when the graph algebras are simple and come from row-finite graphs.  In both cases we accomplish our results by using classification theorems from the Elliott Classification Program for $C^*$-algebras

%%%%%%%%%%%%%%%%%%%%%%%%%%%%%%%%%%%%%%%%%%%%%%%%%%%%%%
\section{Isomorphisms of ultramatricial graph algebras} \label{ultramatricial-sec}
%%%%%%%%%%%%%%%%%%%%%%%%%%%%%%%%%%%%%%%%%%%%%%%%%%%%%%

In this section we prove that the Isomorphism Conjecture for Graph Algebras has an affirmative answer when the graphs have no cycles (equivalently, the Leavitt path algebras are ultramatricial; equivalently, the graph $C^*$-algebras are AF).  Before we do so, we need to establish some $K$-theory notation, and give a formulation of Elliott's classification for direct limits of semisimple algebras that is useful for our situation.

If $R$ is a ring with unit, $K_0(R)$ is the Grothendieck group of the semigroup of finitely generated projective right $R$-modules under the operation of direct sum.  The group $K_0(R)$ is abelian and consists of expressions $[X]-[Y]$, where $X$ and $Y$ are finitely generated projective right $R$-modules.  Two such expressions $[X]-[Y]$ and $[Z]-[W]$ are equal in $K_0(R)$ if and only if there exists a finitely generated projective right $R$-module $V$ such that $X \oplus W \oplus V \cong Z \oplus Y \oplus V$, and the sum of two expressions $[X]-[Y]$ and $[Z]-[W]$ is equal to $[X \oplus Z] - [Y \oplus W]$.  We write $[X]$ for the expression $[X]-[0]$, and define $$K_0(R)^+ := \{ [X] : X \text{ is a finitely generated projective right $R$-module} \}.$$  One can show $(K_0(R), K_0(R)^+)$ is a preordered group with order unit $[R]$.

Given a unital ring homomorphism $\phi : R \to S$ we make $S$ into a left $R$-module via $\phi$, and use the functor $(-)\otimes_R S$ to map right $R$-modules to right $S$-modules.  This induces a homomorphism $K_0(\phi) : (K_0(R), K_0(R)^+, [R]) \to (K_0(S), K_0(S)^+, [S])$ via $K_0(\phi) ([X]-[Y]) := [X \otimes_R S]-[Y \otimes_R S]$.  Furthermore, $K_0(\phi)$ is an isomorphism whenever $\phi$ is an isomorphism.   The assignment $R \mapsto (K_0(R), K_0(R)^+, [R])$ and $\phi \mapsto K_0(\phi)$ defines a functor from the category of rings with unit to the category of preordered abelian groups with order unit.

If $R$ is a $\C$-algebra, the \emph{unitization} of $R$ is the $\C$-algebra $R^1$ which as a vector space is equal to $R \oplus \C$ and has multiplication defined as $(r, \lambda) (s, \mu) := (r + \mu r + \lambda s, \lambda \mu)$.  There is a natural algebra homomorphism $\nu : R^1 \to \C$ given by $\nu (r, \lambda) := \lambda$, and one sees that the kernel of $\nu$ is isomorphic to $R$.

There is then a positive unital group homomorphism $K_0(\nu) : K_0(R^1) \to K_0(\C)$.  When $R$ is unital we have that $K_0(R) \cong \ker K_0(\nu)$ \cite[Proposition~12.1]{GH}.  When $R$ is nonunital, we define $K_0(R) := \ker K_0(\nu)$.  In this case we also view $\ker K_0(\nu)$ as equipped with the preordered abelian group structure inherited from  $K_0(R^1)$.  To take the place of the order unit, we define the \emph{scale} of $K_0(R)$ to be $\Sigma(R) := \{ x \in K_0(R) : 0 \leq x \leq [R^1] \}.$  When $R$ and $S$ are both unital, a positive group homomorphism $\alpha : K_0(R) \to K_0(S)$ is scale preserving if and only if it is unital.

Given two $\C$-algebras $R$ and $S$ and an algebra homomorphism $\phi : R \to S$ we obtain an algebra homomorphism $\phi^1 : R^1 \to S^1$ given by $\phi^1 (r, \lambda) := (\phi(r), \lambda)$.  The induced map $K_0(\phi^1) : K_0(R^1) \to K_0(S^1)$ restricts to a positive homomorphism from $K_0(R)$ to $K_0(S)$ that preserves scales.  We denote this homomorphism by $K_0(\phi)$.  The assignment $R \mapsto (K_0(R), K_0(R)^+, \Sigma(R))$ and $\phi \mapsto K_0(\phi)$ defines a functor from the category of $\C$-algebras to the category of preordered abelian groups with specified sets of positive elements.  This functor preserves direct limits \cite[Proposition~12.2]{GH}, and so also preserves finite direct sums.  In addition, when $R$ is an ultramatricial $\C$-algebra, $(K_0(R), K_0(R)^+)$ is an ordered abelian group.

If $A$ is a $C^*$-algebra, one may define a topological $K_0$ group $K_0^\textnormal{top} (A)$ as the Grothendieck group of Murray-von Neumann equivalence classes of projections in matrices over $A$ (see \cite{RLL} or \cite{WO} for details).  One also defines a preorder and scale for $K_0^\textnormal{top} (A)$.  When $A \otimes \K$ has an approximate unit consisting of projections (which occurs, for example, if $A$ is an AF-algebra), then
\begin{align*}
K_0^\textnormal{top} (A) &= \{ [p] - [q] : \text{$p$ and $q$ are projections in $A \otimes \K$} \} \\
K_0^\textnormal{top} (A)^+ &= \{ [p]  : \text{$p$ is a projection in $A \otimes \K$} \} \\
\Sigma^\textnormal{top}(A) &= \{ [p]  : \text{$p$ is a projection in $A$} \}.
\end{align*}

For any algebra $*$-homomorphism $\phi : A \to B$ between $C^*$-algebras, we obtain algebra $*$-homomorphisms $\phi_n : {\rm M}_n(A) \to {\rm M}_n(B)$ for each $n \in \N$, where $\phi_n$ is obtained by applying $\phi$ to each entry in the matrix.  We let $\phi_\infty : A \otimes \K \to B \otimes \K$ be the algebra $*$-homomorphism induced on the direct limit.   The algebra $*$-homomorphism $\phi : A \to B$ then induces a group homomorphism $K_0^\textnormal{top} (\phi) : K_0^\textnormal{top}(A) \to K_0^\textnormal{top}(B)$ given by $K_0^\textnormal{top}(\phi) ([p] - [q]) = [\phi_\infty(p)] - [\phi_\infty(q)]$.  This assignment defines a functor from the category of $C^*$-algebras to the category of preordered abelian groups with specified sets of positive elements. This functor preserves direct limits \cite[Theorem~6.3.2]{RLL}, and so also preserves finite direct sums.  In addition, when $A$ is an AF-algebra, $(K_0^\textnormal{top} (A), K_0^\textnormal{top} (A)^+)$ is an ordered abelian group.

It turns out that for a $C^*$-algebra $A$ the algebraic $K_0$-group of $A$ agrees with the topological $K_0$-group of $A$ \cite[Theorem~1.1]{Ros}; that is, $$(K_0(A), K_0(A)^+, \Sigma(A)) = (K_0^\textnormal{top}(A), K_0^\textnormal{top}(A)^+, \Sigma^\textnormal{top}(A)).$$  Therefore, when $A$ is a $C^*$-algebra we will, without ambiguity, simply write $(K_0(A), K_0(A)^+, \Sigma(A))$ for the scaled ordered $K_0$-group.

\begin{lemma} \label{alg-K-iso-op-K}
Let $A$ be an AF $C^*$-algebra, and let $R$ be a dense ultramatricial $*$-subalgebra of $A$.  If $\phi : R \hookrightarrow A$ is the inclusion map, then $K_0(\phi) : K_0(R) \to K_0(A)$ is an isomorphism of scaled ordered groups.
\end{lemma}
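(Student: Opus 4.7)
The plan is to realize both $K_0(R)$ and $K_0(A)$ as a common inductive limit of $K_0$-groups of finite-dimensional subalgebras, and then to identify $K_0(\phi)$ with the identity map on that limit.

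First, I would produce a compatible finite-dimensional filtration. Using that $R$ is ultramatricial, write $R = \bigcup_n R_n$ with each $R_n$ finite-dimensional. A short absorption argument---at the $n$-th step, replace $R_n$ by the $*$-subalgebra of $R$ generated by $R_n \cup R_n^*$, which is still finite-dimensional because it sits inside $R_m$ for some sufficiently large $m$ (using that $R$ is itself a $*$-algebra and the filtration is directed)---lets us assume each $R_n$ is a finite-dimensional $*$-subalgebra of $R$. Any finite-dimensional $*$-subspace of a $C^*$-algebra is automatically norm-closed, so each $R_n$ is a finite-dimensional $C^*$-subalgebra of $A$, and density of $R$ then forces $A = \overline{\bigcup_n R_n}$, exhibiting the same $R_n$'s as an AF filtration of $A$.

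Second, I would feed this filtration into both $K_0$ functors. Algebraic $K_0$ preserves direct limits of $\C$-algebras (cited in the excerpt via \cite[Proposition~12.2]{GH}), so $(K_0(R), K_0(R)^+, \Sigma(R)) = \varinjlim_n (K_0(R_n), K_0(R_n)^+, \Sigma(R_n))$ as scaled preordered groups, with transition maps induced by $R_n \hookrightarrow R_{n+1}$. On the analytic side, topological $K_0$ preserves direct limits of $C^*$-algebras, giving $(K_0(A), K_0(A)^+, \Sigma(A)) = \varinjlim_n (K_0^\textnormal{top}(R_n), K_0^\textnormal{top}(R_n)^+, \Sigma^\textnormal{top}(R_n))$ using the same $R_n$'s. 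The crucial observation---immediate from Rosenberg's theorem cited in the excerpt---is that for each finite-dimensional $C^*$-algebra $R_n \cong M_{n_1}(\C) \oplus \cdots \oplus M_{n_{k_n}}(\C)$ the algebraic and topological $K_0$ coincide as scaled ordered groups (both are the standard $\Z^{k_n}$ with the obvious positive cone and scale determined by the matrix block sizes), and the transition maps agree as well. Hence the two direct systems are identical.

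Third, the conclusion. Because each inclusion $R_n \hookrightarrow A$ factors as $R_n \hookrightarrow R \xrightarrow{\phi} A$, the universal property of the direct limit identifies $K_0(\phi)$ with the identity map on the common inductive limit, which is automatically an isomorphism of scaled preordered groups. The excerpt's remarks that the $K_0$ of an ultramatricial algebra and of an AF $C^*$-algebra are both honestly ordered (not merely preordered) upgrade this to an isomorphism of scaled ordered groups. The step I expect to require the most care is the initial absorption argument producing a filtration of $R$ by finite-dimensional $*$-subalgebras: one must check that closing each $R_n$ under the involution and under finitely many products does not escape the finite-dimensional world, which is where directedness of the original filtration and the $*$-invariance of $R$ itself come in. In the main applications of this lemma (e.g.\ when $R = L_\C(E)$ for a graph $E$ with no cycles) an explicit $*$-filtration is already in hand, so this subtlety is invisible there, but the abstract statement does demand the general construction.
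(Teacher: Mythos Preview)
Your proof is correct but follows a different route from the paper's. The paper's argument is very brief: in the unital case it simply cites \cite[Proposition~1.5]{GH}, and in the nonunital case it passes to the unitizations $\phi^1 : R^1 \hookrightarrow A^1$ (which are again ultramatricial-in-AF and now unital), applies the unital case to conclude that $K_0(\phi^1)$ is an isomorphism of scaled ordered groups, and then uses the commutative triangle with $K_0(\nu_R)$ and $K_0(\nu_A)$ to restrict $K_0(\phi^1)$ to the kernels, recovering $K_0(\phi)$. Your argument instead unpacks the mechanism behind the cited result: you build a single filtration $(R_n)$ by finite-dimensional $C^*$-subalgebras that simultaneously exhibits $R$ as an algebraic direct limit and $A$ as a $C^*$-direct limit, and then invoke continuity of $K_0$ in both categories together with the coincidence of algebraic and topological $K_0$ on each $R_n$. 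This is more self-contained and handles the unital and nonunital cases uniformly, whereas the paper's approach is shorter but leaves the real work inside the citation. Your absorption step producing $*$-closed finite-dimensional $R_n$'s is correct and is indeed the only place requiring care; the point that a finite-dimensional subspace of $R = \bigcup R_m$ must land in some $R_m$ is exactly what makes it go through.
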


\begin{proof}
If $A$ and $R$ are unital, the result is \cite[Proposition~1.5]{GH}.  If one of $A$ or $R$ is nonunital, consider the unitizations $A^1$ and $R^1$, let $\phi^1 : R^1 \to A^1$ be the induced unital algebra homomorphism, let $\nu_R : R^1 \to \C$ and $\nu_A : A^1 \to \C$ be the canonical homomorphisms, and observe that the diagram
$$
\xymatrix{ R^1 \ar[r]^{\phi^1} \ar[rd]_{\nu_R} & A^1 \ar[d]^{\nu_A} \\
& \C}
$$ commutes.  Applying the functor $K_0$ we obtain a commutative diagram
$$
\xymatrix{ K_0(R^1) \ar[r]^{K_0(\phi^1)} \ar[rd]_{K_0(\nu_R)} & K_0(A^1) \ar[d]^{K_0(\nu_A)} \\
& K_0(\C)}.
$$  It follows that $K_0(\phi^1)$ restricts to a group isomorphism between $\ker K_0(\nu_R)$ and $K_0(\nu_A)$ that preserves order and scale.  Thus $K_0(\phi) : K_0(R) \to K_0(A)$ is a group isomorphism of scaled ordered groups.
\end{proof}

The following theorem follows from various results in Elliott's work on direct limits of semisimple finite-dimensional algebras.  The version that we state here can be obtained by piecing together various formulations in the existing literature.

\begin{theorem} \label{AF-equivalences-thm}
Let $A$ and $B$ be AF $C^*$-algebras, let $R$ be a dense ultramatricial $*$-subalgebra of $A$, and let $S$ be a dense ultramatricial $*$-subalgebra of $B$.  Then the following are equivalent:
\begin{enumerate}
\item $A \cong B$ (as $*$-algebras),
\item $R \cong S$ (as $*$-algebras),
\item $R \cong S$ (as algebras),
\item $R \cong S$ (as rings),
\item $(K_0(A), K_0(A)^+, \Sigma(A)) \cong  (K_0(B), K_0(B)^+, \Sigma(B))$, and
\item $(K_0(R), K_0(R)^+, \Sigma(R)) \cong  (K_0(S), K_0(S)^+, \Sigma(S))$.
\end{enumerate}

\noindent Moreover, if $$\alpha : (K_0(A), K_0(A)^+, \Sigma(A)) \to (K_0(B), K_0(B)^+, \Sigma(B))$$ is an isomorphism, then there exists an algebra $*$-isomorphism $\phi :A \to B$ such that $K_0(\phi) = \alpha$.  Likewise, if $$\alpha : (K_0(R), K_0(R)^+, \Sigma(R)) \to (K_0(S), K_0(S)^+, \Sigma(S))$$ is an isomorphism, then there exists an algebra $*$-isomorphism $\phi : R \to S$ such that $K_0(\phi) = \alpha$.
\end{theorem}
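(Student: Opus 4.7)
The strategy is to reduce every equivalence to the scaled ordered $K_0$-invariant. I would arrange the logic as a cycle
\[ (1) \Rightarrow (5) \Rightarrow (6) \Rightarrow (2) \Rightarrow (3) \Rightarrow (4) \Rightarrow (6) \Rightarrow (5) \Rightarrow (1), \]
and then prove the moreover clauses by citing the constructive halves of Elliott's theorems. The implications $(2)\Rightarrow(3)\Rightarrow(4)$ and $(1)\Rightarrow(5)$ are formal consequences of the definitions and functoriality of $K_0$.

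The bridge $(5)\Leftrightarrow(6)$ is immediate from Lemma~\ref{alg-K-iso-op-K}: the inclusions $R\hookrightarrow A$ and $S\hookrightarrow B$ induce isomorphisms of scaled ordered $K_0$-groups, so any isomorphism on one side transfers to the other. The implication $(5)\Rightarrow(1)$ is Elliott's classification theorem for AF $C^*$-algebras, which additionally supplies the lifting property: a scaled ordered group isomorphism between $K_0(A)$ and $K_0(B)$ is implemented by some algebra $*$-isomorphism $\phi:A\to B$ with $K_0(\phi)=\alpha$. The implication $(6)\Rightarrow(2)$ is Elliott's original theorem for direct limits of semisimple finite-dimensional algebras, with its lifting property for ultramatricial $*$-algebras.

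The only implication requiring genuine care is $(4)\Rightarrow(6)$, since a ring isomorphism is not assumed to be $\C$-linear or $*$-preserving. Here I would argue that $K_0$ is intrinsically a ring-theoretic invariant: for any ring, one can form $K_0$ from equivalence classes of finitely generated projective modules (or, equivalently for rings with enough idempotents such as ours, from equivalence classes of idempotents in matrix rings), and the preorder together with the scale $\Sigma(R)$ are defined purely in ring-theoretic terms. Thus a ring isomorphism $R\to S$ induces an isomorphism $K_0(R)\to K_0(S)$ of preordered abelian groups that preserves the scale. One must verify that the ring-theoretic $K_0$ agrees with the $\C$-algebraic $K_0$ used in the statement; this is routine for ultramatricial $\C$-algebras, because the unitization $R^1$ inherits a compatible scaled ordered $K_0$ from the ring-theoretic unitization $R\oplus\Z$.

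The main obstacle, and the reason I would invoke Elliott's theorems rather than prove them from scratch, is the lifting step: given an isomorphism on $K_0$, producing an honest algebra $*$-isomorphism requires inductively matching finite-dimensional stages along the two direct limit sequences, at each stage adjusting the approximating $*$-homomorphism by a unitary in the target to intertwine the two given sequences. For the moreover clause one simply quotes the stronger constructive versions of Elliott's classification: any scaled ordered group isomorphism $\alpha$ is realized by a $*$-algebra isomorphism $\phi$ with $K_0(\phi)=\alpha$, both in the AF $C^*$-algebra setting and in the ultramatricial setting.
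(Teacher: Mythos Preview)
Your proof is correct and follows essentially the same approach as the paper: both reduce all six conditions to the scaled ordered $K_0$-invariant via Lemma~\ref{alg-K-iso-op-K} and then invoke Elliott's classification theorems (for AF $C^*$-algebras and for ultramatricial algebras) to lift $K_0$-isomorphisms to $*$-isomorphisms. The one place you diverge is in handling the ring-isomorphism condition: the paper dispatches $(3)\Leftrightarrow(4)$ by citing \cite[Remark~4.4]{Ell2}, whereas you argue directly that $(4)\Rightarrow(6)$ because $K_0$ with its order and scale is a ring-theoretic invariant---this is precisely the content of Elliott's remark, and the cleanest justification (rather than comparing the two unitizations) is that for rings with local units the scale is simply $\{[e]:e\in R\text{ idempotent}\}$, which is manifestly preserved by ring isomorphisms.
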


\begin{proof}  The result follows from the following equivalences.

$(1) \Longleftrightarrow (5)$ is Elliott's Theorem for AF $C^*$-algebras \cite[Theorem~1.3.3]{Ro5}.

$(5) \Longleftrightarrow (6)$ follows from Lemma~\ref{alg-K-iso-op-K}.

$(6) \Longleftrightarrow (3)$ is a theorem of Elliott \cite[Theorem~12.5]{GH}.

$(3) \Longleftrightarrow (2)$ follows from \cite[Theorem~4.3, Appendix]{Ell2}.

$(3) \Longleftrightarrow (4)$ follows from \cite[Remark 4.4]{Ell2}.

\noindent The fact that isomorphisms on $K$-groups lift to $*$-isomorphisms of the associated algebras follows from  \cite[Theorem~1.3.3]{Ro5}, \cite[Theorem~12.5]{GH}, and \cite[Theorem~4.3, Appendix]{Ell2}.

\end{proof}

\begin{remark}
The equivalence $(3) \Longleftrightarrow (4)$ of Theorem \ref{AF-equivalences-thm} is one consequence of a deep result of Elliott (\cite[Theorem 4.3]{Ell2}).  This connection between the  structure of a dense subalgebra $R$ of an AF-algebra $A$ viewed as an \emph{algebra}, versus the structure of $R$ viewed simply as a \emph{ring}, is for us a crucial bridge between the analytic and algebraic sides of our investigation.
\end{remark}

We are now in position to present a solution to the Isomorphism Conjecture for Graph Algebras in case the graphs are acyclic.

\begin{proposition} \label{ultramatricial-iso-prob}
Let $E$ and $F$ be graphs with no cycles.  Then the following are equivalent.
\begin{enumerate}
\item $C^*(E) \cong C^*(F)$ (as $*$-algebras),
\item $L_\C(E) \cong L_\C(E)$ (as $*$-algebras),
\item $L_\C(E) \cong L_\C(F)$ (as algebras), and
\item $L_\C(E) \cong L_\C(F)$ (as rings).
\end{enumerate}
\end{proposition}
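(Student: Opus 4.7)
The plan is to invoke Theorem~\ref{AF-equivalences-thm} directly, after establishing that the hypotheses of that theorem apply to the graph algebras under consideration. Specifically, the proof hinges on two standard facts about graphs without cycles: (i) when $E$ has no cycles, $C^*(E)$ is an AF-algebra, and (ii) when $E$ has no cycles, $L_\C(E)$ is ultramatricial. The first is well known in the graph $C^*$-algebra literature (it follows, for example, from the fact that $C^*(E)$ can be written as the direct limit of finite-dimensional $C^*$-algebras indexed by finite subgraphs), and the second is a standard result about Leavitt path algebras of acyclic graphs (producing a direct limit of finite-dimensional semisimple $K$-algebras).

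Given these two facts, the strategy is straightforward. First I would recall Proposition~\ref{embedding-prop}, which identifies $L_\C(E)$ with a dense $*$-subalgebra $\iota_E(L_\C(E))$ of $C^*(E)$, and similarly $L_\C(F)$ with a dense $*$-subalgebra of $C^*(F)$. Thus $A := C^*(E)$ and $B := C^*(F)$ are AF-algebras containing the dense ultramatricial $*$-subalgebras $R := \iota_E(L_\C(E))$ and $S := \iota_F(L_\C(F))$, respectively.

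Next I would apply Theorem~\ref{AF-equivalences-thm} to this setup. The equivalence of conditions (1), (2), (3), and (4) in the Theorem immediately yields the equivalence of the four conditions in the present proposition. In particular, (1)~$\Longleftrightarrow$~(2) in Theorem~\ref{AF-equivalences-thm} gives $C^*(E) \cong C^*(F)$ (as $*$-algebras) $\iff$ $L_\C(E) \cong L_\C(F)$ (as $*$-algebras), while (2)~$\Longleftrightarrow$~(3)~$\Longleftrightarrow$~(4) in Theorem~\ref{AF-equivalences-thm} provides the remaining implications chaining $*$-algebra, algebra, and ring isomorphism together.

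The main (and only real) obstacle here is verification of the two preliminary facts about acyclic $E$; but since these are standard results in the literature, the proof reduces to citing them and then invoking Theorem~\ref{AF-equivalences-thm}. No delicate construction or $K$-theoretic argument specific to graph algebras is required beyond what is already encapsulated in that theorem, since Elliott's classification and its algebraic analogue do all the heavy lifting once the AF/ultramatricial structure is in place.
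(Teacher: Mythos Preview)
Your proposal is correct and follows essentially the same approach as the paper: cite the standard facts that acyclic graphs yield AF graph $C^*$-algebras and ultramatricial Leavitt path algebras, invoke Proposition~\ref{embedding-prop} to realize $L_\C(E)$ and $L_\C(F)$ as dense ultramatricial $*$-subalgebras of $C^*(E)$ and $C^*(F)$, and then apply Theorem~\ref{AF-equivalences-thm}. The paper's proof is identical in structure, differing only in that it gives explicit references for the two preliminary facts.
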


\begin{proof}
Since $E$ and $F$ have no cycles, it follows from \cite[Theorem~2.4]{KPR} that $C^*(E)$ and $C^*(F)$ are AF-algebras.  Also, since $E$ and $F$ have no cycles, it follows that $L_\C(E)$ and $L_\C(F)$ are ultramatricial algebras.  (This is shown in \cite[\S3.3 p.106]{W} for row-finite graphs, and an application of desingularization \cite[Theorem~5.2]{AbrPino3} gives the result for countably infinite graphs.  The result is also shown directly, for arbitrary sized graphs, in \cite{AR}.)  Proposition~\ref{embedding-prop} implies that $L_\C(E)$ is isomorphic to a dense $*$-subalgebra of $C^*(E)$ and $L_\C(F)$ is isomorphic to a dense $*$-subalgebra of $C^*(F)$.  The result then follows from Theorem~\ref{AF-equivalences-thm}.
\end{proof}

\begin{remark}
Proposition~\ref{ultramatricial-iso-prob} shows that the Isomorphism Conjecture for Graph Algebras of \S\ref{Iso-Prob-sec}, Conjecture 1 of \S\ref{alg-and-*-alg-hom-sec}, and Conjecture 2 of \S\ref{alg-and-*-alg-hom-sec} all have affirmative answers in the case that $E$ and $F$ have no cycles.
\end{remark}

%%%%%%%%%%%%%%%%%%%%%%%%%%%%%%%%%%%%%%%%%%%%%%%%%%%%%%
\section{Isomorphisms of simple graph algebras} \label{simple-sec}
%%%%%%%%%%%%%%%%%%%%%%%%%%%%%%%%%%%%%%%%%%%%%%%%%%%%%%

In this section we prove that the Isomorphism Conjecture for Graph Algebras has an affirmative answer when the graph algebras are simple and come from row-finite graphs.  To do this we will need to make use of $K$-theory and the Kirchberg-Phillips Classification Theorem for purely infinite, simple, separable, nuclear $C^*$-algebras.  Throughout we let $K_1(R)$ denote the algebraic $K_1$-group of a ring $R$, and we let $K_1^\textnormal{top}$ denote the topological $K_1$-group of a $C^*$-algebra $A$.  Recall that with $K_0$-groups, we have that $K_0(A) \cong K_0^\textnormal{top} (A)$ whenever $A$ is a $C^*$-algebra \cite[Theorem~1.1]{Ros}.

\begin{definition}
If $G$ is an abelian group, then an element $g \in G$ is \emph{divisible by $n$} if there exists $x \in G$ such that $nx =g$.    We call $g$ \emph{divisible} if $g$ is divisible by $n$ for all $n \in \N$.  We say an abelian group $G$ is \emph{divisible} if every element of $G$ is divisible.
\end{definition}

\begin{remark}
If $G$ is a free abelian group, then the only divisible element of $G$ is the identity $0$.  This is because $G$ is isomorphic to a (possibly infinite) direct sum of copies of $\Z$.
\end{remark}

The following result is well-known in the abelian group community; we include a proof for completeness.

\begin{lemma} \label{free-divisible-lem}
Suppose that $D_1$ and $D_2$ are divisible abelian groups and that $F_1$ and $F_2$ are free abelian groups.   If $D_1 \oplus F_1 \cong D_2 \oplus F_2$, then $D_1 \cong D_2$ and $F_1 \cong F_2$.
\end{lemma}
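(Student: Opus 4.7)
The plan is to exploit the fact that divisibility is preserved by isomorphisms, while free abelian groups contain no nontrivial divisible subgroups. So if I can identify $D_1$ and $D_2$ as characteristic (i.e.\ isomorphism-invariant) subgroups of $D_1 \oplus F_1$ and $D_2 \oplus F_2$ respectively, then any isomorphism between the two direct sums must carry one onto the other.

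First I would recall the standard fact that for an abelian group $G$, the sum $d(G)$ of all divisible subgroups of $G$ is itself a divisible subgroup (a homomorphic image of a divisible group is divisible, and a sum of divisible subgroups is divisible), and hence $d(G)$ is the unique maximal divisible subgroup of $G$. Because it is defined intrinsically from the group structure, $d(G)$ is characteristic: any isomorphism $\phi : G \to G'$ satisfies $\phi(d(G)) = d(G')$.

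Next I would compute $d(D \oplus F)$ when $D$ is divisible and $F$ is free. The inclusion $D \subseteq d(D \oplus F)$ is immediate. For the reverse inclusion, note that if $\pi : D \oplus F \to F$ is the projection, then $\pi(d(D \oplus F))$ is a divisible subgroup of $F$. Since $F$ is free abelian, it is (isomorphic to) a direct sum of copies of $\Z$, and no such group has a nonzero divisible subgroup (the only element of $\Z^{(I)}$ divisible by every integer is $0$). Therefore $\pi(d(D \oplus F)) = 0$, so $d(D \oplus F) \subseteq D$.

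Combining these, given an isomorphism $\phi : D_1 \oplus F_1 \to D_2 \oplus F_2$, the maximal-divisible-subgroup characterization yields
\[
\phi(D_1) = \phi(d(D_1 \oplus F_1)) = d(D_2 \oplus F_2) = D_2,
\]
so $D_1 \cong D_2$. Passing to quotients, $F_1 \cong (D_1 \oplus F_1)/D_1 \cong (D_2 \oplus F_2)/D_2 \cong F_2$. I expect no serious obstacle here; the only subtlety is making sure one uses the maximal divisible \emph{subgroup} rather than the (generally non-subgroup) set of divisible elements, and verifying that a free abelian group has no nontrivial divisible subgroups.
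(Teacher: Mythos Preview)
Your proof is correct and follows essentially the same approach as the paper: both arguments show that an isomorphism $\phi$ must carry $D_1 \oplus 0$ onto $D_2 \oplus 0$ because divisibility is preserved and free abelian groups have no nontrivial divisible elements, and then pass to quotients to get $F_1 \cong F_2$. The only cosmetic difference is that the paper works element by element with divisible elements, whereas you package the same idea via the maximal divisible subgroup $d(G)$; your worry about needing $d(G)$ rather than the set of divisible elements is in fact unnecessary here, since the paper's element-wise argument already suffices.
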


\begin{proof}
Let $\phi : D_1 \oplus F_1 \to D_2 \oplus F_2$ be an isomorphism.  If $d \in D_1$, then since $D_1$ is a divisible group, the element $(d,0)$ is a divisible element of $D_1 \oplus F_1$.  Thus $\phi(d_1, 0)$ is a divisible element of $D_2 \oplus F_2$.  Suppose $\phi(d_1, 0) = (d_2, f_2)$.  Then for any $n \in \N$ there exists $(x,y) \in D_2 \oplus F_2$ such that $n (x,y) = (d_2, f_2)$.  Hence for any $n \in \N$ there exists $y \in F_2$ such that $ny =  f_2$.  Thus $f_2$ is divisible in $F_2$, and since $F_2$ is a free abelian group we may conclude that $f_2 = 0$.  Therefore $\phi (D_1 \oplus 0) \subseteq D_2 \oplus 0$.  A similar argument using $\phi^{-1}$ shows that $\phi (D_1 \oplus 0) =D_2 \oplus 0$.  In particular, we get $D_1 \cong D_2$.  It follows that $(D_1 \oplus F_1) /  (D_1 \oplus 0) \cong (D_2 \oplus F_2) / \phi(D_1 \oplus 0)$, and hence $(D_1 \oplus F_1) /  (D_1 \oplus 0) \cong (D_2 \oplus F_2) / (D_2 \oplus 0)$, and thus $F_1 \cong F_2$.
\end{proof}

 Recall that $\C^\times$ denotes the multiplicative group of nonzero complex numbers.  For a natural number $m \in \N$ let $(\C^\times)^m$ denote the direct sum of $m$ copies of $C^\times$ and let $(\C^\times)^\infty$ denote the direct sum of a countably infinite number of copies of $\C^\times$.

\begin{theorem} \label{K-theory-implications-thm}
Let $E$ and $F$ be row-finite graphs with no sinks.  Then the following two implications hold:
\begin{itemize}
\item[(1)] If $K_0(L_\C(E)) \cong K_0(L_\C(F))$, then $K_0^\textnormal{top} (C^*(E)) \cong K_0^\textnormal{top} (C^*(F))$.  Furthermore, if the isomorphism from $K_0(L_\C(E))$ to $K_0(L_\C(F))$ preserves the (pre)order and scale, then there exists an isomorphism from $K_0^\textnormal{top} (C^*(E))$ to $K_0^\textnormal{top} (C^*(F))$ that preserves the (pre)order and scale.  In addition, if $L_\C(E)$ and $L_\C(F)$ are unital and the isomorphism from $K_0(L_\C(E))$ to $K_0(L_\C(F))$ preserves the class of the unit, then there exists an isomorphism from $K_0^\textnormal{top} (C^*(E))$ to $K_0^\textnormal{top} (C^*(F))$ that preserves the class of the unit.
\item[(2)] If $K_1(L_\C(E)) \cong K_1(L_\C(F))$, then $K_1^\textnormal{top} (C^*(E)) \cong K_1^\textnormal{top} (C^*(F))$.
\end{itemize}
\end{theorem}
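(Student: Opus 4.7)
The plan is to reduce both parts to known $K$-theory computations for graph algebras over row-finite graphs with no sinks, and then to invoke Lemma~\ref{free-divisible-lem} for the $K_1$ statement. For such a graph $E$, let $A_E$ denote the vertex matrix, regarded as giving an endomorphism of $\Z^{(E^0)}$. The Pimsner--Voiculescu-type computation for graph $C^*$-algebras gives
\[
K_0^{\textnormal{top}}(C^*(E)) \cong \coker\bigl(1 - A_E^t : \Z^{(E^0)} \to \Z^{(E^0)}\bigr), \qquad K_1^{\textnormal{top}}(C^*(E)) \cong \ker\bigl(1 - A_E^t : \Z^{(E^0)} \to \Z^{(E^0)}\bigr),
\]
while the analogous algebraic long exact sequence of Ara--Brustenga--Corti\~{n}as gives the same cokernel description of $K_0(L_\C(E))$ and fits $K_1(L_\C(E))$ into
\[
0 \to \coker\bigl(1 - A_E^t : K_1(\C)^{(E^0)} \to K_1(\C)^{(E^0)}\bigr) \to K_1(L_\C(E)) \to \ker\bigl(1 - A_E^t : \Z^{(E^0)} \to \Z^{(E^0)}\bigr) \to 0.
\]

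For part (1), the two descriptions of $K_0$ coincide at the level of abelian groups, and I would verify that the natural map $K_0(\iota_E) : K_0(L_\C(E)) \to K_0^{\textnormal{top}}(C^*(E))$ induced by the embedding $\iota_E$ of Proposition~\ref{embedding-prop} is an isomorphism of scaled preordered abelian groups: $\iota_E$ identifies the generating projections $p_v$ on both sides, and both positive cones and scales are described in terms of the classes $[p_v]$ (together with the class of the unit $\sum_v p_v$ when $E^0$ is finite). Granting this, the implication and its refinements are formal: precompose the hypothesized isomorphism $\alpha : K_0(L_\C(E)) \to K_0(L_\C(F))$ with $K_0(\iota_E)^{-1}$ and postcompose with $K_0(\iota_F)$.

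For part (2), note that $K_1(\C) = \C^\times$ is a divisible abelian group (for instance, $\C^\times \cong \mathbb{R} \oplus \mathbb{Q}/\mathbb{Z}$ multiplicatively), so $(\C^\times)^{(E^0)}$ is divisible, and hence so is the cokernel $D_E := \coker\bigl(1 - A_E^t|_{(\C^\times)^{(E^0)}}\bigr)$. On the other hand, $\ker\bigl(1 - A_E^t : \Z^{(E^0)} \to \Z^{(E^0)}\bigr)$ is a subgroup of a free abelian group, hence free abelian. The displayed short exact sequence therefore splits (since $\operatorname{Ext}^1$ of a free abelian group vanishes), yielding
\[
K_1(L_\C(E)) \cong D_E \oplus K_1^{\textnormal{top}}(C^*(E)),
\]
and analogously $K_1(L_\C(F)) \cong D_F \oplus K_1^{\textnormal{top}}(C^*(F))$. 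An isomorphism $K_1(L_\C(E)) \cong K_1(L_\C(F))$ then produces an isomorphism between direct sums of a divisible group and a free abelian group, and Lemma~\ref{free-divisible-lem} extracts the desired $K_1^{\textnormal{top}}(C^*(E)) \cong K_1^{\textnormal{top}}(C^*(F))$.

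The main obstacle is the structural refinement in (1): one must carefully verify that the $K_0$-identification transports the (pre)order, the scale, and the class of the unit coherently. Once the $K$-theory sequences are invoked in suitable form, this reduces to tracking the classes $[p_v]$ and the sum $\sum_v [p_v]$ through the natural identifications, which is where the hypothesis of row-finiteness with no sinks (ensuring $E^0_{\reg} = E^0$ so that the relation $p_v = \sum_{s(e)=v} s_e s_e^*$ holds at every vertex) is used to make the cokernel descriptions match on both sides. Part (2) then proceeds cleanly once divisibility of $\C^\times$ is noted.
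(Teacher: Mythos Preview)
Your proposal is correct and follows essentially the same approach as the paper: for (1) you use that the inclusion $\iota_E$ induces a $K_0$-isomorphism respecting order, scale, and unit (the paper simply cites \cite[Theorem~7.1]{AMP} for this), and for (2) you decompose $K_1(L_\C(E))$ as a divisible cokernel direct-summed with the free abelian kernel $\ker(1-A_E^t)\cong K_1^{\textnormal{top}}(C^*(E))$ and then invoke Lemma~\ref{free-divisible-lem}, exactly as the paper does (the paper cites \cite[Corollary~7.7]{ABC} for the direct-sum form rather than deriving it from the short exact sequence, but this is the same content).
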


\begin{proof}  The result of (1) follows from \cite[Theorem~7.1]{AMP} where it is shown that for a row-finite graph $E$ the inclusion $\iota : L_\C(E) \hookrightarrow C^*(E)$ induces an isomorphism from $K_0(L_\C(E))$ onto $K_0^\textnormal{top} (C^*(E))$.

To obtain (2), let $A_E$ be the vertex matrix of $E$, and consider the matrix $A_E^t-I$ (where $M^t$ denotes the transpose of a matrix $M$).  Since $E$ is a row-finite graph, $A_E$ is a row-finite matrix, and $A_E^t-I$ is a column-finite matrix.  When $|E^0|$ is finite, it follows from \cite[Corollary~7.7]{ABC} that
$$K_1(L_\C(E)) \cong \coker (A_E^t - I : (\C^\times)^n \to (\C^\times)^n) \oplus \ker (A_E^t - I : \Z^n \to \Z^n)$$ where $n := |E^0|$.  In addition, the same formula holds when $E^0$ is infinite, provided we allow $n = \infty$.  (To see this, we must use the computation, also proven in \cite[Corollary~7.7]{ABC}, for the $K_1$ group for the Leavitt path algebra of a finite graph with sinks.  In particular, $K_1(L_\C(E)) \cong \coker (B_E^t - I : (\C^\times)^r \to (\C^\times)^n) \oplus \ker (B_E^t - I : \Z^r \to \Z^n)$ where $B_E$ is the nonsquare matrix obtained from the vertex matrix $A_E$ by deleting the zero rows corresponding to sinks and $r$ is the number of vertices that are not sinks.  One can then take a direct limit representation in terms
of finite graphs, which may involve graphs with sinks even if the
original infinite graph had no sinks. The direct limit
argument then gives the above result in the infinite case, because the vertices in the infinite graph will eventually be nonsinks in some approximation.)

A similar result holds, \emph{mutatis mutandis}, for $F$.  Since $K_1(L_\C(E)) \cong K_1(L_\C(F))$, we have
\begin{align*}
\coker (A_E^t &- I : (\C^\times)^n \to (\C^\times)^n) \oplus \ker (A_E^t - I : \Z^n \to \Z^n) \\ & \cong \coker (A_F^t - I : (\C^\times)^m \to (\C^\times)^m) \oplus \ker (A_F^t - I : \Z^m \to \Z^m)
\end{align*}
where we define $n$ and $m$ to be the (possibly infinite) values $n := | E^0|$ and $m := | F^0|$.

Since $\C^\times$ is a divisible group, it is not hard to show directly that any direct sum $(\C^\times)^k$ is divisible (including the case $k=\infty$).  Because $\coker (A_E^t - I : (\C^\times)^n \to (\C^\times)^n)$ and $\coker (A_F^t - I : (\C^\times)^m \to (\C^\times)^m)$ are quotients of divisible groups, they are themselves divisible groups.  Furthermore, since any direct sum $\Z^k$ is a free abelian group, we see that  $\ker (A_E^t - I : \Z^n \to \Z^n)$ and $\ker (A_F^t - I : \Z^m \to \Z^m)$ are subgroups of free abelian groups, and are therefore free abelian groups themselves (by a standard result in abelian groups).  It follows from Lemma~\ref{free-divisible-lem} that
$$\coker (A_E^t - I : (\C^\times)^n \to (\C^\times)^n) \cong \coker (A_F^t - I : (\C^\times)^m \to (\C^\times)^m)$$
and
$$\ker (A_E^t - I : \Z^n \to \Z^n) \cong \ker (A_F^t - I : \Z^m \to \Z^m).$$
It is proven in \cite[Theorem~7.16]{Rae} that $K_1^\textnormal{top}(C^*(E)) \cong \ker ( A_E^t - I :  \Z^n \to \Z^n)$ and $K_1^\textnormal{top}(C^*(F)) \cong \ker ( A_F^t - I :  \Z^n \to \Z^n)$.  Thus $K_1^\textnormal{top}(C^*(E)) \cong K_1^\textnormal{top}(C^*(F))$.
\end{proof}

We are now in position to establish another case of the Isomorphism Conjecture for Graph Algebras.

\begin{proposition} \label{purely-inf-iso-prob}
Let $E$ and $F$ be row-finite graphs that are each cofinal, satisfy Condition~(L), and contain at least one cycle.  If $L_\C(E) \cong L_\C(F)$ (as rings), then $C^*(E) \cong C^*(F)$ (as $*$-algebras).
\end{proposition}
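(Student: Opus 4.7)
The plan is to invoke the Kirchberg--Phillips Classification Theorem for simple, separable, nuclear, purely infinite $C^*$-algebras satisfying the UCT. Under the hypotheses on $E$ and $F$, the graph $C^*$-algebras $C^*(E)$ and $C^*(F)$ are simple (as $E$ and $F$ are row-finite, cofinal, and satisfy Condition~(L)) and purely infinite (as each graph contains a cycle, with Condition~(L) providing an exit). They are also separable, nuclear, and in the UCT class, so both are Kirchberg algebras to which the classification theorem applies. Moreover, cofinality together with the presence of a cycle rules out sinks in either graph, so the hypothesis of Theorem~\ref{K-theory-implications-thm} is met.

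Given a ring isomorphism $\phi : L_\C(E) \to L_\C(F)$, the functoriality of algebraic $K_0$ and $K_1$ for rings immediately yields isomorphisms $K_0(\phi) : K_0(L_\C(E)) \to K_0(L_\C(F))$ and $K_1(\phi) : K_1(L_\C(E)) \to K_1(L_\C(F))$. Theorem~\ref{K-theory-implications-thm} then transports both to isomorphisms of the topological $K$-groups of $C^*(E)$ and $C^*(F)$.

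A ring isomorphism preserves the existence of a multiplicative identity, so either $L_\C(E)$ and $L_\C(F)$ are both unital (equivalently, $|E^0|, |F^0| < \infty$) or both non-unital. In the unital case, $\phi(1) = 1$, so $K_0(\phi)$ preserves the class of the unit; Theorem~\ref{K-theory-implications-thm}(1) upgrades this to a unit-preserving isomorphism $K_0^\textnormal{top}(C^*(E)) \cong K_0^\textnormal{top}(C^*(F))$, and the unital form of Kirchberg--Phillips then gives $C^*(E) \cong C^*(F)$ as $*$-algebras. In the non-unital case, $C^*(E)$ and $C^*(F)$ are simple, separable, $\sigma$-unital, purely infinite, and non-unital; Zhang's dichotomy forces both to be stable Kirchberg algebras, and the stable version of Kirchberg--Phillips (which requires only isomorphism of $(K_0, K_1)$) again yields $C^*(E) \cong C^*(F)$.

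The main expected obstacle is ensuring that the ring-theoretic $K_0$ and $K_1$ of $L_\C(E)$ faithfully encode the topological $K$-theory of $C^*(E)$ with the correct unit data; this is the content of Theorem~\ref{K-theory-implications-thm}, so the present proposition reduces to assembling these pieces with the appropriate dichotomy (unital versus stable) of Kirchberg--Phillips determined by whether $E^0$ and $F^0$ are finite or infinite.
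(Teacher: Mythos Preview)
Your proposal is correct and follows essentially the same route as the paper: verify that $C^*(E)$ and $C^*(F)$ are Kirchberg algebras in the UCT class, use the ring isomorphism to get $K_0$ and $K_1$ isomorphisms (preserving the unit class when unital), push these through Theorem~\ref{K-theory-implications-thm} to topological $K$-theory, and invoke Kirchberg--Phillips. Your explicit invocation of Zhang's dichotomy in the nonunital case is a small elaboration beyond what the paper writes (the paper simply cites \cite[Theorem~4.2.4]{Phi} uniformly), but it is correct and arguably clarifies why isomorphism, not merely stable isomorphism, follows.
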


\begin{proof}
Note that since $E$ and $F$ are each cofinal and contain a cycle, neither graph can contain a sink.  Thus $E$ and $F$ are both row-finite graphs with no sinks.

In addition, since $L_\C(E) \cong L_\C(F)$ (as rings) we have that either $L_\C(E)$ and $L_\C(F)$ are both unital (and hence $C^*(E)$ and $C^*(F)$ are both unital) or $L_\C(E)$ and $L_\C(F)$ are both nonunital (and hence $C^*(E)$ and $C^*(F)$ are both nonunital).

Moreover, since $L_\C(E) \cong L_\C(F)$ (as rings), it follows from previous remarks that $K_0(L_\C(E)) \cong K_0(L_\C(F))$ as groups.  Furthermore, in the unital case this isomorphism may be chosen to preserve the class of the unit (i.e., $[L_\C(E) ] \mapsto [ L_\C(F)]$).  Likewise, since  $L_\C(E) \cong L_\C(F)$ (as rings), it follows that $K_1(L_\C(E)) \cong K_1(L_\C(F))$ as groups.  Theorem~\ref{K-theory-implications-thm} then implies that $K_0^\textnormal{top}(C^*(E)) \cong K_0^\textnormal{top}(C^*(F))$ (and in the unital case this isomorphism may be chosen to preserve the class of the unit), and also $K_1^\textnormal{top}(C^*(E)) \cong K_1^\textnormal{top}(C^*(F))$.

Because $E$ and $F$ are cofinal, satisfy Condition~(L), and contain at least one cycle, it follows from \cite[Theorem~5.1, Remark~5.6]{BPRS} that $C^*(E)$ and $C^*(F)$ are each purely infinite and simple.  Since all graph $C^*$-algebras of countable graphs are separable, nuclear, and satisfy the UCT \cite[Remark~A.11.13]{Tom-thesis}, the hypotheses of the Kirchberg-Phillips Classification Theorem \cite[Theorem~4.2.4]{Phi} are satisfied.  Therefore, $C^*(E) \cong C^*(F)$ (as $*$-algebras).
\end{proof}

The two previously established cases of the Isomorphism Conjecture blend nicely to produce the following result.

\begin{theorem} \label{Iso-Conjecture-simple-alg-thm}
Let $E$ and $F$ be row-finite graphs such that $L_\C(E)$ and $L_\C(F)$ are simple.  If $L_\C(E) \cong L_\C(F)$ (as rings), then $C^*(E) \cong C^*(F)$ (as $*$-algebras).
\end{theorem}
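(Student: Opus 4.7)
The plan is to leverage the well-known dichotomy for simple Leavitt path algebras: when $L_\C(E)$ is simple and $E$ is row-finite, either $E$ has no cycles (in which case $L_\C(E)$ is ultramatricial, in fact isomorphic to $M_n(\C)$ for some $n \in \N \cup \{\infty\}$), or $E$ contains a cycle (in which case $L_\C(E)$ is purely infinite simple). This dichotomy matches exactly the two cases already treated in Proposition~\ref{ultramatricial-iso-prob} and Proposition~\ref{purely-inf-iso-prob}, so the work amounts to showing that the ring isomorphism $L_\C(E) \cong L_\C(F)$ cannot cross this dichotomy, and then reducing to the appropriate prior result.

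First, I would record the standard graph-theoretic criteria: for a row-finite graph $E$, simplicity of $L_\C(E)$ forces $E$ to satisfy Condition~(L) and to be cofinal; and conversely, no simple Leavitt path algebra from a row-finite graph with at least one cycle can avoid being purely infinite. These are well-documented properties in the Leavitt path algebra literature.

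Second, I would argue that $E$ contains a cycle if and only if $F$ does. The key observation is that being purely infinite simple is a purely ring-theoretic condition (for instance, every nonzero idempotent is infinite in the Murray--von Neumann sense), and so is its negation in a simple ring. Hence a ring isomorphism $L_\C(E) \cong L_\C(F)$ cannot pair a matricial simple algebra (whose nonzero idempotents are all finite) with a purely infinite simple algebra. Consequently $E$ and $F$ lie on the same side of the cycle/no-cycle dichotomy.

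Third, I would split into the two remaining cases. If neither graph has a cycle, Proposition~\ref{ultramatricial-iso-prob} immediately gives $C^*(E) \cong C^*(F)$ as $*$-algebras. If both graphs contain at least one cycle, then by the first step $E$ and $F$ are each cofinal, satisfy Condition~(L), and contain a cycle, so the hypotheses of Proposition~\ref{purely-inf-iso-prob} are met and again $C^*(E) \cong C^*(F)$ as $*$-algebras. The main obstacle is the dichotomy step: one must be careful that the distinguishing feature (e.g., Murray--von Neumann finiteness of idempotents, or equivalently the existence of an infinite idempotent) is genuinely a ring-theoretic invariant rather than one that quietly depends on the scalar or $*$-structure; once that is verified, the proof is essentially an assembly of the two earlier propositions.
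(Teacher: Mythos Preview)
Your proposal is correct and follows essentially the same route as the paper: invoke the dichotomy for simple Leavitt path algebras (ultramatricial versus purely infinite), observe that a ring isomorphism cannot cross this dichotomy, and then apply Proposition~\ref{ultramatricial-iso-prob} or Proposition~\ref{purely-inf-iso-prob} accordingly. The paper's proof is terser---it cites the dichotomy theorem and moves directly to the case split---whereas you spell out why the two sides of the dichotomy are ring-theoretically distinguishable; this extra care is appropriate but does not constitute a different argument.
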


\begin{proof}
It follows from the dichotomy for simple Leavitt path algebras \cite[Theorem~4.4]{AbrPino3} that $L_\C(E)$ and $L_\C(F)$ are either both ultramatricial (in the case that $E$ and $F$ contain no cycles) or are both purely infinite (in the case that $E$ and $F$ each contain at least one cycle).  If $L_\C(E)$ and $L_\C(F)$ are both ultramatricial, then the result follows from Proposition~\ref{ultramatricial-iso-prob}. If $L_\C(E)$ and $L_\C(F)$ are both purely infinite, then $E$ and $F$ are each cofinal graphs satisfying Condition~(L) and containing a cycle, so the result follows from Proposition~\ref{purely-inf-iso-prob}.
\end{proof}

%%%%%%%%%%%%%%%%%%%%%%%%%%%%%%%%%%%%%%%%%%%%%%%%%%%%%%
\section{The Morita Equivalence Conjecture for Graph Algebras} \label{Mor-Equiv-sec}
%%%%%%%%%%%%%%%%%%%%%%%%%%%%%%%%%%%%%%%%%%%%%%%%%%%%%%

Two unital rings are said to be \emph{Morita equivalent} if and only if their categories of left modules are equivalent if and only if there exists an equivalence bimodule between the rings (see \cite[Ch.6]{AndFul} for details).  The notion of Morita equivalence has also been extended to various classes of nonunital rings (see, for example, \cite{AAM}), including rings with \emph{enough idempotents} (see Definition~\ref{enough-idempotents-def}).  Any Leavitt path algebra is a ring with enough idempotents, and consequently there is a notion of Morita equivalence for the class of Leavitt path algebras.  Given two rings  $R$ and $S$, we will write $R \ME S$ to indicate that $R$ and $S$ are Morita equivalent.

Rieffel was the first to extend the notion of Morita equivalence to $C^*$-algebras, and in the past 25 years these ideas have proven extremely fruitful and developed into a standard set of tools for $C^*$-algebraists (see \cite{Rie82} for details).  When extending the definition from rings to $C^*$-algebras, there are two ways to proceed:  Rieffel defined two $C^*$-algebras to be \emph{Morita equivalent} if their categories of Hermitian modules are equivalent (and this equivalence is also required to preserve the $*$-operation on the morphisms of these categories) \cite[p.295--296]{Rie82}.  Rieffel also defined two $C^*$-algebras to be \emph{strongly Morita equivalent} if there exists an equivalence $C^*$-bimodule between them \cite[p.295--296]{Rie82}.  It turns out that if $A$ and $B$ are $C^*$-algebras, then $A$ is strongly Morita equivalent to $B$ implies that $A$ is Morita equivalent to $B$.  The converse, in general, does not hold.

 In addition, two unital $C^*$-algebras are algebraically Morita equivalent (i.e., Morita equivalent as rings) if and only if they are strongly Morita equivalent \cite[p.295--296]{Rie82}.  In the development of $C^*$-algebras, strong Morita equivalence has emerged as the most useful notion, and that is what most $C^*$-algebraists focus on.  We will do the same here, and for two $C^*$-algebras $A$ and $B$ we write $A \SME B$ to indicate that $A$ and $B$ are strongly Morita equivalent.  (We warn the reader that since strong Morita equivalence is the predominant notion in much of $C^*$-algebra theory, in  the more recent literature many authors have taken to simply writing ``Morita equivalent" to mean strongly Morita equivalent, an inconsistency with Rieffel's early definition of Morita equivalences.)

In this section we examine Morita equivalence for graph algebras, and consider the following conjecture, which we call the Morita Equivalence Conjecture for Graph Algebras.

$ $

\noindent \textbf{The Morita Equivalence Conjecture for Graph Algebras:} If $E$ and $F$ are graphs, then $\LC(E) \ME \LC(F)$ implies that $C^*(E) \SME C^*(F)$.

$ $

In this section we shall prove an affirmative answer to the Morita Equivalence Conjecture for Graph Algebras for simple graph algebras coming from row-finite graphs as well as for graph algebras of graphs with no cycles.  We also prove that an affirmative answer to the Isomorphism Conjecture for Graph Algebras (see \S\ref{Iso-Prob-sec}) for all graphs implies an affirmative answer for all graphs to the Morita Equivalence Conjecture for Graph Algebras.  We accomplish this by considering stabilizations of graphs and their associated algebras.

\begin{definition}  Given a graph $E$, let $M_nE$ be the graph formed from $E$ by taking each $v \in E^0$ and attaching a ``head" of length $n-1$ of the form
\begin{equation*}
\xymatrix{  v_{n-1} \ar[r]^{e^v_{n-1}} & \ldots  \ar[r]^{e^v_3} & v_2 \ar[r]^{e^v_2} & v_1 \ar[r]^{e^v_1} & v }
\end{equation*}
to $E$.
\end{definition}

\begin{example}
If $E$ is the graph
$$
\xymatrix{
\bullet \ar[rd] & & \bullet \\ & \bullet \ar[ru] \ar[r] & \bullet \ar@(ul,ur)
}
$$
then $M_3E$ is the graph
$$
\xymatrix{
 \bullet \ar[r] & \bullet \ar[r] & \bullet \ar[rd] & & \bullet
& \bullet \ar[l] & \bullet \ar[l] \\
& \bullet \ar[r] & \bullet \ar[r] & \bullet
\ar[ru] \ar[r] & \bullet \ar@(ul,ur) & \bullet \ar[l] & \bullet \ar[l]  }
$$
\end{example}

\noindent The following result generalizes \cite[Proposition 13]{AbrPino2} to all graph algebras.

\begin{proposition} \label{Mn-graphs-prop}
If $E$ is a graph, then for any $n\in\N$ it is the case that $C^*(M_nE) \cong {\rm M}_n(C^*(E))$ (as $*$-algebras) and $\LC(M_nE) \cong {\rm M}_n(\LC(E))$ (as $*$-algebras).  In addition, if $K$ is any field, then $L_K(M_nE) \cong {\rm M}_n(L_K(E))$ (as algebras).
\end{proposition}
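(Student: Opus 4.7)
The plan is to build the isomorphisms explicitly from each of the three universal properties at hand (Definition~\ref{graph-C*-def}, the $*$-algebra universal property of $L_\C(E)$ in Section~\ref{preliminaries}, and Remark~\ref{LPA-univ-rem}), by exhibiting a canonical Cuntz--Krieger / Leavitt $M_n E$-family inside the appropriate matrix algebra. Write $\{e_{ij}\}_{1\le i,j\le n}$ for the standard matrix units, and let $\{p_v,s_e\}$ be the generators of $C^*(E)$ (or $L_K(E)$). For each $v\in E^0$ set $P_v := e_{nn}\otimes p_v$, and for the added head vertices and edges
$v_{n-1}\xrightarrow{e^v_{n-1}}\cdots\xrightarrow{e^v_1}v$ define
$$P_{v_k} := e_{n-k,n-k}\otimes p_v \quad (1\le k\le n-1),\qquad S_{e^v_k}:=e_{n-k,n-k+1}\otimes p_v.$$
Finally, for each original edge $e\in E^1$ set $S_e := e_{nn}\otimes s_e$.

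The first step is to verify that $\{P_w,S_f : w\in(M_nE)^0,\, f\in(M_nE)^1\}$ is a Cuntz--Krieger $M_nE$-family (and likewise a Leavitt $M_nE$-family if one works over $K$). The computation is mechanical: using $e_{ij}e_{k\ell}=\delta_{jk}e_{i\ell}$ and the fact that the $p_v$'s are pairwise orthogonal projections, one checks $S_{e^v_k}^*S_{e^v_k}=P_{r(e^v_k)}$ and $S_{e^v_k}S_{e^v_k}^*=P_{s(e^v_k)}=P_{v_k}$, while $S_e^*S_e=e_{nn}\otimes p_{r(e)}=P_{r(e)}$ and $S_eS_e^*=e_{nn}\otimes s_es_e^*\leq P_{s(e)}$. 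The only regular vertices of $M_nE$ are the head vertices $v_k$ (each emitting the single edge $e^v_k$) and the vertices of $E$ that were regular in $E$ (for which Cuntz--Krieger (2) for $M_nE$ reduces immediately to Cuntz--Krieger (2) for $E$ after tensoring with $e_{nn}$). Thus the respective universal properties provide $*$-homomorphisms $\phi:C^*(M_nE)\to M_n(C^*(E))$ and $\psi:L_K(M_nE)\to M_n(L_K(E))$ (and an algebra $*$-homomorphism in the $L_\C$ case).

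For surjectivity, form the head path $\alpha^v_k:=e^v_k e^v_{k-1}\cdots e^v_1$ in $M_nE$. The construction yields $\phi(s_{\alpha^v_k})=e_{n-k,n}\otimes p_v$, and taking adjoints plus products of such elements gives every $e_{ij}\otimes p_v$. Sandwiching $S_e=e_{nn}\otimes s_e$ between these yields every $e_{ij}\otimes s_e$. Since the collection $\{e_{ij}\otimes p_v, e_{ij}\otimes s_e\}$ generates $M_n(C^*(E))$ as a $C^*$-algebra (and generates $M_n(L_K(E))$ as a $K$-algebra), $\phi$ and $\psi$ are surjective.

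For injectivity I would invoke the respective uniqueness theorems. In the $C^*$-algebra case, define a gauge action $\beta:\T\to\operatorname{Aut}M_n(C^*(E))$ by $\beta_z(e_{ij}\otimes x):=z^{j-i}\,e_{ij}\otimes\gamma_z(x)$, where $\gamma$ is the gauge action on $C^*(E)$; a quick check shows $\beta_z$ is a $*$-automorphism, that $\beta_z(P_w)=P_w$ for every $w\in(M_nE)^0$, and $\beta_z(S_f)=zS_f$ for every $f\in(M_nE)^1$, so $\phi$ intertwines the gauge actions. Since each $P_w\ne 0$, the Gauge-Invariant Uniqueness Theorem gives injectivity. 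In the Leavitt path algebra case, equip $M_n(L_K(E))$ with the $\Z$-grading $\deg(e_{ij}\otimes x):=(j-i)+\deg(x)$ and observe that $\psi$ is a graded homomorphism with $\psi(w)\ne 0$ for every $w\in(M_nE)^0$, so the Graded Uniqueness Theorem (used already in Proposition~\ref{embedding-prop}) yields injectivity; for $K=\C$ this upgrades to an algebra $*$-isomorphism since $\psi$ visibly preserves the involution on generators. The main subtlety is bookkeeping the matrix-unit indices so that the source/range conventions of $M_nE$ (heads pointing into $E$) are matched by upper/lower triangular matrix units, and making sure the gauge action and grading are well-defined when $E^0$ is infinite (so that $M_n(C^*(E))$ and $M_n(L_K(E))$ are nonunital rings with enough idempotents rather than ordinary matrix algebras); both points are handled by defining $\beta_z$ and the grading componentwise on $e_{ij}\otimes x$, which makes no reference to a unit.
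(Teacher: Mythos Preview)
Your proof is correct and follows essentially the same approach as the paper's: construct an explicit Cuntz--Krieger/Leavitt $M_nE$-family inside the matrix algebra using matrix units, invoke the universal property, verify surjectivity by recovering all $e_{ij}\otimes p_v$ and $e_{ij}\otimes s_e$, and prove injectivity via the Gauge-Invariant Uniqueness Theorem (respectively the Graded Uniqueness Theorem). The only difference is a cosmetic reindexing---you place the copy of $E$ at matrix position $n$ while the paper places it at position $1$, which accounts for your $z^{j-i}$ and $\deg = (j-i)+\deg(x)$ versus the paper's $z^{i-j}$ and $t+(i-j)$; both conventions are internally consistent and yield the same isomorphism up to a permutation of matrix coordinates.
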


\begin{proof}
We will first prove that $C^*(M_nE) \cong {\rm M}_n(C^*(E))$ (as $*$-algebras).  For
$1\leq i,j \leq n$, we let $E_{i,j}$ denote the matrix in
${\rm M}_n(\C)$ with a $1$ in the $(i,j)$\textsuperscript{th} position
and $0$'s elsewhere.   For $a \in C^*(E)$ we let $E_{i,j}
\otimes a$ denote the matrix in ${\rm M}_n(C^*(E))$ with
$a$ in the $(i,j)$\textsuperscript{th} position and $0$'s
elsewhere. Note that $(E_{i,j} \otimes a)(E_{k,l} \otimes b ) =
E_{i,j} E_{k,l} \otimes ab$ in ${\rm M}_n(C^*(E))$.

Let $\{s_e, p_v : e\in E^1, v \in E^0 \}$ be a generating
Cuntz-Krieger $E$-family for $C^*(E)$. For each $v \in E^0$ and $e
\in E^1$ define $$P_v := E_{1,1} \otimes p_v \qquad \text{ and }
\qquad S_e := E_{1,1} \otimes s_e.$$  Also for $v \in E^0$ and $k
\in \{1, \ldots, n-1\}$ define $$P_{v_k} := E_{k+1, k+1} \otimes
p_v \qquad \text{ and } \qquad S_{e^v_k} := E_{k+1, k} \otimes
p_v.$$

It is straightforward to verify that $\{ S_e, S_{e^v_k} : v \in E^0, e \in E^1, 1 \leq k \leq n-1 \} \cup \{P_v, P_{v_k} : v \in E^0, 1 \leq k \leq n-1 \}$ is a Cuntz-Krieger $M_nE$-family in ${\rm M}_n(C^*(E))$. Thus there exists a canonical algebra $*$-homomorphism $\phi : C^*(M_nE) \to
{\rm M}_n(C^*(E))$.  To see that $\phi$ is onto, it suffices to show
that for all $v \in E^0$,  $e \in E^1$, and $1 \leq i,j \leq n$ it
is the case that $E_{i,j} \otimes s_e$ and $E_{i,j} \otimes p_v$
are in the $*$-subalgebra of ${\rm M}_n(C^*(E))$ generated by $\{ S_e, P_v :
e \in M_nE^1, v \in M_nE^0 \}$.  For $i = j$ we have $$E_{i,j}
\otimes p_v = E_{i,i} \otimes p_v = \begin{cases} P_v & \text{ if $i=1$} \\ P_{v_{i-1}} & \text{ if $i \geq 2$},\end{cases}$$ for $i > j$ we have
$$E_{i,j} \otimes p_v = (E_{i,i-1} \otimes p_v) (E_{i-1,i-2}
\otimes p_v) \ldots (E_{j+1, j} \otimes p_v) = S_{e_{i-1}^v}
S_{e_{i-2}^v} \ldots S_{e_j^v},$$ and for $i < j$ we have
$$E_{i,j} \otimes p_v = (E_{i,i+1} \otimes p_v) (E_{i+1,i+2}
\otimes p_v) \ldots (E_{j-1, j} \otimes p_v) = S_{e_i^v}^*
S_{e_{i+1}^v}^* \ldots S_{e_{j-1}^v}^*.$$  Finally, for any $e \in
E^1$ and $1 \leq i,j \leq n$ we have $$E_{i,j} \otimes s_e = (E_{i,1} \otimes
p_{s(e)}) (E_{1,1} \otimes s_e ) (E_{1,j} \otimes p_{r(e)}) =
S_{e_{i-1}^v} \ldots S_{e_1^v} S_e S_{e_{1}^v}^* \ldots
S_{e_{j-1}^v}^*.$$  Thus $\phi$ is onto.

To see that $\phi$ is injective, we use the Gauge-Invariant
Uniqueness Theorem \cite[Theorem~2.1]{BHRS}. Let $\gamma^E$ be the
canonical gauge action on $C^*(E)$, and let $\beta$ be the gauge
action on ${\rm M}_n(\C)$ defined by $\beta_z (E_{i,j}) =
z^{i-j}E_{i,j}$. Then there is a gauge action $\gamma$ on ${\rm M}_n(
C^*(E))$ given by $\gamma_z (E_{i,j} \otimes a) = \beta_z(E_{i,j})
\gamma_z^E(a)$.  If one lets $\gamma^{M_nE}$ denote the canonical
gauge action on $C^*(M_nE)$, then one can verify that $\phi \circ
\gamma^{M_nE} = \gamma \circ \phi$. (Simply check the equality on
generators.) Since the $P_v$'s are also nonzero, the
Gauge-Invariant Uniqueness Theorem implies that $\phi$ is
injective.  Thus $\phi$ is an algebra $*$-isomorphism, and $C^*(M_nE) \cong
{\rm M}_n(C^*(E))$ (as $*$-algebras).

The same proof can be applied \emph{mutatis mutandis} (and, in
particular, by applying the Graded Uniqueness Theorem
\cite[Theorem~4.8]{Tom10} in place of the Gauge-Invariant
Uniqueness Theorem) to show that $\LC(M_nE) \cong {\rm M}_n(\LC(E))$ (as $*$-algebras) and that $L_K(M_nE) \cong {\rm M}_n(L_K(E))$ (as algebras), for any field $K$.  We mention that specifically we are using the following grading on the matrix ring: If $x \in L_K(E)_t$, then the degree of the element in ${\rm M}_n(L_K(E))$ having $x$ in the $(i,j)$ entry and zeros elsewhere is equal to $t+(i-j)$.
\end{proof}

\begin{definition}\label{stabilization}  Given a graph $E$, let $SE$ be the graph formed from $E$ by taking each $v \in E^0$ and attaching an infinite ``head" of the form
\begin{equation*}
\xymatrix{   \ldots  \ar[r]^{e^v_4} & \ar[r]^{e^v_3} v_3& v_2 \ar[r]^{e^v_2} & v_1 \ar[r]^{e^v_1} & v }
\end{equation*}
to $E$.  We call $SE$ the \emph{stabilization} of $E$.
\end{definition}

\begin{example}
If $E$ is the graph
$$
\xymatrix{
\bullet \ar[rd] & & \bullet \\ & \bullet \ar[ru] \ar[r] & \bullet \ar@(ul,ur)
}
$$
then $SE$ is the graph
$$
\xymatrix{
\cdots \ar[r] & \bullet \ar[r] & \bullet \ar[r] & \bullet \ar[rd] & & \bullet
& \bullet \ar[l] & \bullet \ar[l] & \cdots \ar[l] \\
\cdots \ar[r] & \bullet \ar[r] & \bullet \ar[r] & \bullet \ar[r] & \bullet
\ar[ru] \ar[r] & \bullet \ar@(ul,ur) & \bullet \ar[l] & \bullet \ar[l] &
\cdots \ar[l] }
$$
\end{example}

\begin{definition}\label{M-infinity}
For any ring $R$ we let ${\rm M}_\infty(R)$ denote the ring of finitely supported, countably infinite square matrices with coefficients in $R$.    If $R$ is an algebra (respectively, a $*$-algebra), then ${\rm M}_\infty(R)$ inherits an algebra structure (respectively, a $*$-algebra structure) in the usual way.
\end{definition}

In general, if $G$ is a subgraph of the graph $E$ then the inclusion $G \hookrightarrow E$ does not necessarily induce homomorphisms $\rho_L:L_K(G) \to L_K(E)$ and  $\rho_C: C^*(G)\to C^*(E)$.  This is due to the fact that the Cuntz-Krieger relation at a vertex $v$, when $v$ is viewed in $G$, need not be compatible with the corresponding Cuntz-Krieger relation at $v$, when $v$ is viewed in the larger graph $E$.   However, in certain situations the inclusion $G \hookrightarrow E$ does induce natural homomorphisms  $\rho_L:L_K(G) \to L_K(E)$ and  $\rho_C: C^*(G)\to C^*(E)$, which are necessarily injective, so that $L_K(G)$ can be viewed as a subalgebra of $L_K(E)$, and $C^*(G)$ can be viewed as a $C^*$-subalgebra of $C^*(E)$.   Specifically, this happens when $G$ is a \emph{complete} subgraph of $E$.

 \begin{definition}
A subgraph $G$ of a graph $E$ is called {\it complete} in case, for each regular vertex $v\in G^0$, we have  $s^{-1}_G(v) = s^{-1}_E(v)$.  (In other words, a subgraph $G$ of $E$ is complete if, whenever $v\in G^0$ emits a nonzero, finite number of edges in $G$, then necessarily the subgraph $G$ contains all of the edges in $E$ emitted by $v$.)
\end{definition}

\begin{proposition} \label{suspension-prop}
If $E$ is a graph, then
\begin{itemize}
\item[(1)] $\LC(SE) \cong {\rm M}_{\infty}(\LC(E))$ (as $*$-algebras),
\item[(2)] $L_K(SE) \cong {\rm M}_{\infty}(L_K(E))$ (as $K$-algebras) for any field $K$, and
\item[(3)] $C^*(SE) \cong C^*(E) \otimes \K$ (as $*$-algebras), where $\K = \K(H)$ denotes the compact operators on a separable infinite-dimensional Hilbert space $H$.
\end{itemize}
\end{proposition}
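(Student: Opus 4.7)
The plan is to generalize the proof of Proposition~\ref{Mn-graphs-prop} by replacing the finite $n\times n$ matrices with the infinite matrices of Definition~\ref{M-infinity} (in the algebraic settings) and with the compact operators $\K(H)$ on a separable infinite-dimensional Hilbert space (in the $C^*$-setting). In each case the strategy is to exhibit a Leavitt (respectively Cuntz-Krieger) $SE$-family inside the target algebra, invoke the appropriate universal property to obtain a homomorphism, and then apply a suitable uniqueness theorem to show that it is an isomorphism.

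For parts (1) and (2), let $\{s_e, p_v : e \in E^1, v \in E^0\}$ be the standard generating family of $L_K(E)$, and let $\{E_{i,j}\}_{i,j \in \N}$ denote the matrix units of ${\rm M}_\infty(L_K(E))$. For each $v \in E^0$, $e \in E^1$, and $k \geq 1$, set
\begin{align*}
P_v &:= E_{1,1} \otimes p_v, & P_{v_k} &:= E_{k+1,k+1} \otimes p_v, \\
S_e &:= E_{1,1} \otimes s_e, & S_{e^v_k} &:= E_{k+1,k} \otimes p_v.
\end{align*}
A routine computation shows that these elements form a Leavitt $SE$-family in ${\rm M}_\infty(L_K(E))$: the relation at each new vertex $v_k$ reduces to $P_{v_k} = S_{e^v_k} S_{e^v_k}^*$ (since $v_k$ emits only the single edge $e^v_k$), while the relation at an original regular vertex $v$ is the old relation tensored on the left with $E_{1,1}$. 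The universal property of $L_K(SE)$ yields an algebra homomorphism $\phi : L_K(SE) \to {\rm M}_\infty(L_K(E))$, which in the case $K = \C$ is automatically an algebra $*$-homomorphism. Surjectivity follows exactly as in the proof of Proposition~\ref{Mn-graphs-prop}: each matrix unit $E_{i,j} \otimes s_\alpha s_\beta^*$ can be written as a product of the $S$- and $P$-generators above, using ghost edges to shift the first index up and real edges to shift it down. For injectivity, equip ${\rm M}_\infty(L_K(E))$ with the $\Z$-grading in which $E_{i,j} \otimes x$ has degree $\deg(x) + (i-j)$, observe that $\phi$ is then a graded homomorphism sending every vertex of $SE$ to a nonzero element, and appeal to the Graded Uniqueness Theorem \cite[Theorem~4.8]{Tom10}.

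For part (3), I would repeat this construction with matrix units $e_{i,j}$ in $\K(H)$ and with the target algebra $C^*(E) \otimes \K$, which canonically contains $\algspan\{e_{i,j} \otimes a : i,j \in \N,\ a \in C^*(E)\}$ as a dense $*$-subalgebra. This yields an algebra $*$-homomorphism $\phi : C^*(SE) \to C^*(E) \otimes \K$, and the surjectivity argument from the algebraic case shows that its image contains every elementary tensor $e_{i,j} \otimes s_\alpha s_\beta^*$ and is therefore dense. For injectivity, equip $\K$ with the gauge action $\beta_z(e_{i,j}) := z^{i-j} e_{i,j}$, and combine with the canonical gauge action $\gamma^E$ on $C^*(E)$ to obtain an action of $\T$ on $C^*(E) \otimes \K$; checking on generators shows that $\phi$ intertwines this action with the gauge action $\gamma^{SE}$, and since $\phi$ does not kill any vertex projection, the Gauge-Invariant Uniqueness Theorem \cite[Theorem~2.1]{BHRS} gives injectivity. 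The main place where the $C^*$-argument departs from its algebraic counterpart is the passage from dense image to surjectivity: this relies on the standard fact that a $*$-homomorphism between $C^*$-algebras has closed range, and it is the one step I would justify most carefully in writing up the proof.
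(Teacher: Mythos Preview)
Your argument is correct, but it takes a different route from the paper. The paper proves this proposition by a direct-limit argument: since each $M_nE$ sits as a complete subgraph of $M_{n+1}E$ and $SE = \bigcup_{n=1}^\infty M_nE$, continuity of the functor $E \mapsto L_K(E)$ with respect to algebraic direct limits \cite[Lemma~3.2]{AMP} (respectively, of $E \mapsto C^*(E)$ with respect to $C^*$-algebraic direct limits) gives $L_K(SE) \cong \varinjlim L_K(M_nE) \cong \varinjlim {\rm M}_n(L_K(E)) = {\rm M}_\infty(L_K(E))$, invoking Proposition~\ref{Mn-graphs-prop} at the second step. Your approach instead lifts the explicit construction of Proposition~\ref{Mn-graphs-prop} directly to the infinite case, bypassing the direct-limit machinery entirely. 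The trade-off is that the paper's proof is shorter and reuses the finite case already established, while yours is more self-contained (it does not depend on the continuity results from \cite{AMP}) and makes the isomorphism completely explicit. One small point worth spelling out in your $C^*$-argument: the formula $\beta_z(e_{i,j}) = z^{i-j} e_{i,j}$ defines $\beta_z$ only on a dense subalgebra of $\K$, so you should note that it extends to all of $\K$, e.g.\ by realizing it as conjugation by the diagonal unitary $U_z\xi_n = z^n\xi_n$.
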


\begin{proof}

For each $n$ we see that $M_nE$ sits as a complete subgraph of $M_{n+1}E$.  Thus the inclusion $i_n : M_nE \hookrightarrow M_{n+1}E$ induces an algebra $*$-homomorphism $(i_n)_* : {\rm M}_n(\LC(E)) \hookrightarrow {\rm M}_{n+1} (\LC(E))$ taking $A \mapsto \left( \begin{smallmatrix} A & 0 \\ 0 & 0 \end{smallmatrix} \right)$.
We also see that $SE = \bigcup_{n=1}^\infty M_nE$.  Furthermore, since the functor $E \mapsto \LC(E)$ is continuous with respect to algebraic direct limits \cite[Lemma~3.2]{AMP}, we have that $\LC(SE)$ is isomorphic (as a $*$-algebra) to the algebraic direct limit $\bigcup_{n=1}^\infty \LC(M_nE)$ and using Proposition~\ref{Mn-graphs-prop} we obtain $\bigcup_{n=1}^\infty \LC(M_nE) = \bigcup_{n=1}^\infty {\rm M}_n(\LC(E)) = {\rm M}_{\infty} (\LC(E))$.  Thus $\LC(SE) \cong {\rm M}_{\infty} (\LC(E))$ (as $*$-algebras).  A similar argument with $K$ in place of $\C$, and using algebra homomorphisms, shows that (2) holds.

The result in (3) is proven in \cite[Theorem~4.2]{Tom7}, using the characterization of stability of graph $C^*$-algebras obtained in \cite[Theorem~3.2]{Tom7}.  However, we can give a short direct proof as in the previous paragraph.  Since $SE =  \bigcup_{n=1}^\infty M_nE$, and the functor $E \mapsto C^*(E)$ is continuous with respect to $C^*$-algebraic direct limits \cite[Lemma~3.3]{AMP}, $C^*(SE)$ is isomorphic (as a $*$-algebra) to the $C^*$-algebraic direct limit $\overline{\bigcup_{n=1}^\infty C^*(M_nE)} = \overline{\bigcup_{n=1}^\infty {\rm M}_n(C^*(E))} = C^*(E) \otimes \K$.  Thus $C^*(SE) \cong C^*(E) \otimes \K$ (as $*$-algebras).
\end{proof}

The following definition was first given in \cite{Ful}.

\begin{definition} \label{enough-idempotents-def}
A ring $R$ is said to have a set of \emph{enough idempotents} in case there exists a set of orthogonal idempotents $\{e_i | i\in I\}$ in $R$ for which ${}_RR = \oplus _{i\in I} Re_i$.
\end{definition}

In particular, any unital ring is a ring with enough idempotents, with set $\{1\}$. It is then clear that for any graph $E$, the Leavitt path algebra $L_K(E)$ has a set of enough idempotents, specifically, the set of vertices $E^0$.

The following result provides a bridge between isomorphism and Morita equivalence.  The proof in the unital case is Stephenson's Theorem on infinite matrix rings   \cite[Theorem~3.6]{Ste} (with an explicit description of the isomorphism given in \cite[Lemma 1.2]{A1}).  In the case that $A$ and $B$ each have countable sets of enough idempotents, the proof in this more general setting is given in \cite[Theorem 5 and Remarks~1 and 2, p. 412]{AAM}.

\begin{proposition}  \label{Mor-equiv-iso-mat-prop}
Let $A$ and $B$ be rings with countable sets of enough idempotents.   Then $A \ME B$ if and only if ${\rm M}_{\infty} (A) \cong {\rm M}_{\infty} (B)$ (as rings).
\end{proposition}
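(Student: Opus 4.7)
The plan is to prove the two directions separately, invoking Stephenson's classical theorem in the unital case and its generalization to rings with enough idempotents. Throughout, let $\{e_n\}_{n \in \N}$ and $\{f_n\}_{n \in \N}$ be countable sets of enough idempotents for $A$ and $B$, respectively. Without loss of generality (by replacing $e_n$ with $e_1 + \cdots + e_n$ and similarly for $f_n$) one may assume these are ascending sequences whose associated ``corners'' $e_n A e_n$ are unital subrings exhausting $A$, and likewise for $B$.

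For the direction ${\rm M}_\infty(A) \cong {\rm M}_\infty(B) \Rightarrow A \ME B$, I would first establish the intermediate result that any ring $R$ with a countable set of enough idempotents satisfies $R \ME {\rm M}_\infty(R)$. This can be achieved by producing an explicit surjective Morita context: take as bimodules the finite-support infinite row vectors and the finite-support infinite column vectors over $R$, with the pairings given by the two natural ``inner products.'' The hypothesis of enough idempotents guarantees surjectivity of the pairings onto $R$ and ${\rm M}_\infty(R)$ respectively. Granted this, transitivity of Morita equivalence yields $A \ME {\rm M}_\infty(A) \cong {\rm M}_\infty(B) \ME B$, and the direction follows.

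For the converse direction $A \ME B \Rightarrow {\rm M}_\infty(A) \cong {\rm M}_\infty(B)$, I would follow Stephenson's classical strategy. Given an equivalence bimodule $_AP_B$ witnessing $A \ME B$ (in the sense appropriate for rings with enough idempotents, as formulated in the framework of \cite{AAM}), form the countable direct sum $P^{(\N)}$. The ring $ {\rm M}_\infty(B)$ can then be identified with a ring of ``finite support'' endomorphisms of $P^{(\N)}$ arising from matrices of $B$-valued pairings, and similarly ${\rm M}_\infty(A)$ with a ring of finite-support $A$-valued endomorphisms. In the unital case, the classical progenerator calculus directly produces a ring isomorphism between these two descriptions, which is the content of Stephenson's theorem \cite[Theorem~3.6]{Ste}, with the explicit isomorphism written down in \cite[Lemma~1.2]{A1}. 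In the more general setting of rings with enough idempotents, the same calculus goes through after invoking the machinery in \cite[Theorem~5 and Remarks 1 and 2, p.~412]{AAM}.

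The main obstacle is technical rather than conceptual: standard Morita theory is formulated in the unital setting, so one must verify carefully that the countable set of enough idempotents provides a sufficiently rich supply of finitely generated projective modules to support all the constructions. In particular, one needs the ``finite support'' structure in ${\rm M}_\infty$ to correspond to partial units built from finite sums of the distinguished idempotents, and one needs the bimodule direct sum $P^{(\N)}$ and the associated endomorphism rings to be well-defined and behave as expected in the absence of a global unit. The countability hypothesis on the set of enough idempotents is precisely what forces the resulting matrix rings to be ${\rm M}_\infty$ (as opposed to larger matrix rings indexed by uncountable sets). Once these technicalities are in place, the classical arguments adapt without essential change, and the proof is complete by citing \cite[Theorem~3.6]{Ste}, \cite[Lemma~1.2]{A1}, and \cite[Theorem~5]{AAM}.
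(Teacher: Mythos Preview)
Your proposal is correct and aligns with the paper's approach: the paper does not give a self-contained argument but simply cites Stephenson's theorem \cite[Theorem~3.6]{Ste} (with the explicit isomorphism in \cite[Lemma~1.2]{A1}) for the unital case and \cite[Theorem~5 and Remarks~1 and 2, p.~412]{AAM} for the general case of rings with countable sets of enough idempotents, which are exactly the references you invoke. Your sketch of the underlying bimodule and endomorphism-ring arguments is a reasonable expansion of what those citations contain, so there is no divergence in method.
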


\begin{corollary} \label{Mor-equiv-LPAs}
Let $E$ and $F$ be graphs.     Then $L_K(E) \ME L_K(F)$ if and only if ${\rm M}_{\infty}(L_K(E)) \cong {\rm M}_{\infty}(L_K(F))$ (as rings).
\end{corollary}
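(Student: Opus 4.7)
The plan is to apply Proposition~\ref{Mor-equiv-iso-mat-prop} directly, with $A = L_K(E)$ and $B = L_K(F)$. The only thing that needs to be checked is that both Leavitt path algebras meet the hypotheses of that proposition, namely that each possesses a countable set of enough idempotents.

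For this, I would invoke Remark~\ref{enough-idempotents-rem}, which already records that for any graph $E$ the vertex set $E^0$ consists of pairwise orthogonal idempotents in $L_K(E)$, and that the finite sums of distinct vertices constitute a set of enough idempotents in the sense of Definition~\ref{enough-idempotents-def}. Concretely, every element of $L_K(E)$ is a finite $K$-linear combination of monomials of the form $\alpha \beta^*$, and each such monomial lies in $L_K(E) \cdot v$ for the unique vertex $v$ on which it is supported from the appropriate side; grouping terms by this vertex yields the direct sum decomposition ${}_{L_K(E)} L_K(E) = \bigoplus_{v \in E^0} L_K(E) v$ required by Definition~\ref{enough-idempotents-def}. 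Countability of $E^0$ is part of our standing hypothesis on graphs (see Remark~\ref{countable-hypothesis-rem}), so this set is indeed a \emph{countable} set of enough idempotents; the same holds for $F^0$ and $L_K(F)$.

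With both hypotheses verified, Proposition~\ref{Mor-equiv-iso-mat-prop} yields the desired equivalence: $L_K(E) \ME L_K(F)$ if and only if ${\rm M}_{\infty}(L_K(E)) \cong {\rm M}_{\infty}(L_K(F))$ as rings. There is no real obstacle here, since the substance of the corollary lies in Proposition~\ref{Mor-equiv-iso-mat-prop} (essentially Stephenson's theorem, extended to rings with enough idempotents in~\cite{AAM}); the present statement is its specialization to Leavitt path algebras, made possible by the fact that the countability built into our definition of a graph supplies the countable family of orthogonal idempotents automatically.
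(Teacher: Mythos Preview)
Your proof is correct and follows essentially the same approach as the paper's: verify that the countability of $E^0$ (from the standing hypothesis in Remark~\ref{countable-hypothesis-rem}) together with Remark~\ref{enough-idempotents-rem} gives countable sets of enough idempotents, then apply Proposition~\ref{Mor-equiv-iso-mat-prop}. The paper's proof is somewhat terser, but the logic is identical.
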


\begin{proof}
By our standing hypothesis (see Remark~\ref{countable-hypothesis-rem}), the vertex sets $E^0$ and $F^0$ are countable. Thus the algebras $L_K(E)$ and $L_K(F)$ have countable sets of enough idempotents.  Now apply Proposition~\ref{Mor-equiv-iso-mat-prop}.
\end{proof}

In the following two subsections we give two important applications of Proposition~\ref{Mor-equiv-iso-mat-prop}.  First, we give Morita equivalence results for ultramatricial algebras and AF $C^*$-algebras paralleling the isomorphism results of \S \ref{ultramatricial-sec}.  These results are interesting in their own right, and they also imply that the Morita Equivalence Conjecture for Graph Algebras holds for graphs with no cycles.  Second, we show that if $\mathcal{C}$ is a class of graphs for which the Isomorphism Conjecture holds and which is closed under the stabilization operation given in Definition \ref{stabilization},  then the Morita Equivalence Conjecture holds for all graphs in $\mathcal{C}$ as well.  This implies, in particular, that the Morita Equivalence Conjecture for Graph Algebras holds whenever the algebras are simple and come from row-finite graphs, and it also shows that an affirmative answer to the Isomorphism Conjecture for Graph Algebras implies an affirmative answer to the Morita Equivalence Conjecture for Graph Algebras.

\subsection{Morita equivalence of ultramatricial graph algebras}

In this subsection we prove Morita equivalence versions of the results in \S\ref{ultramatricial-sec}.

\begin{theorem} \label{AF-Morita-equivalences-thm} (cf. Theorem~\ref{AF-equivalences-thm})
Let $A$ and $B$ be AF $C^*$-algebras, let $R$ be a dense ultramatricial $*$-subalgebra of $A$, and let $S$ be a dense ultramatricial $*$-subalgebra of $B$.  Then the following are equivalent:
\begin{enumerate}
\item $A \SME B$,
\item $R \ME S$,
\item $(K_0(A), K_0(A)^+) \cong  (K_0(B), K_0(B)^+)$, and
\item $(K_0(R), K_0(R)^+) \cong  (K_0(S), K_0(S)^+)$.
\end{enumerate}
\end{theorem}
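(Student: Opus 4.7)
The plan is to split the four-way equivalence into three reductions --- $(3) \Longleftrightarrow (4)$, $(1) \Longleftrightarrow (3)$, and $(2) \Longleftrightarrow (4)$ --- and in the latter two to reduce to the scaled preordered-$K_0$ classifications of Theorem~\ref{AF-equivalences-thm} by first stabilizing both algebras. The guiding principle is that stabilization --- by $\K$ on the $C^*$-side, or by ${\rm M}_\infty$ on the algebraic side --- trivializes the scale, because applying ${\rm M}_n$ to a stable algebra returns the same algebra, so every element of $K_0^+$ can be represented within the stabilization itself and therefore lies in the scale.

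The equivalence $(3) \Longleftrightarrow (4)$ is immediate from Lemma~\ref{alg-K-iso-op-K}, which furnishes canonical preordered-group isomorphisms between $K_0(R)$ and $K_0(A)$, and between $K_0(S)$ and $K_0(B)$.

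For $(1) \Longleftrightarrow (3)$, I would invoke the Brown-Green-Rieffel Theorem cited in the introduction: two separable $C^*$-algebras are strongly Morita equivalent if and only if their $\K$-stabilizations are $*$-isomorphic. AF-algebras are separable, so $A \SME B$ if and only if $A \otimes \K \cong B \otimes \K$ as $*$-algebras. The stabilizations remain AF, so Elliott's theorem (Theorem~\ref{AF-equivalences-thm}, $(1) \Longleftrightarrow (5)$) converts this into an isomorphism of scaled preordered $K_0$-groups of $A \otimes \K$ and $B \otimes \K$; the scale-killing principle (the scale of a stable $C^*$-algebra equals its entire positive cone) together with the $K$-theoretic stability identification $K_0(A \otimes \K) \cong K_0(A)$ then recovers (3). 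For $(2) \Longleftrightarrow (4)$ I would mirror this on the algebraic side. Since $R$ and $S$ sit densely inside the separable algebras $A$ and $B$, they admit countable sets of enough idempotents, so Proposition~\ref{Mor-equiv-iso-mat-prop} converts (2) into the claim ${\rm M}_\infty(R) \cong {\rm M}_\infty(S)$ as rings. The algebras ${\rm M}_\infty(R)$ and ${\rm M}_\infty(S)$ are themselves ultramatricial (as direct limits ${\rm M}_\infty(R) = \bigcup_n {\rm M}_n(R)$) and sit densely as $*$-subalgebras of the AF-algebras $A \otimes \K$ and $B \otimes \K$. Theorem~\ref{AF-equivalences-thm} then converts the ring isomorphism into a scaled preordered $K_0$-isomorphism of ${\rm M}_\infty(R)$ and ${\rm M}_\infty(S)$, and scale-killing together with the algebraic stability isomorphism $K_0({\rm M}_\infty(R)) \cong K_0(R)$ reduces this to (4).

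The main technical obstacle is verifying the scale-killing claim on the algebraic side, namely that $\Sigma({\rm M}_\infty(R)) = K_0({\rm M}_\infty(R))^+$ in the ring-theoretic $K_0$ defined via the unitization. This should rest on the identification ${\rm M}_n({\rm M}_\infty(R)) \cong {\rm M}_\infty(R)$, which allows every positive class to be represented by an idempotent already lying in ${\rm M}_\infty(R)$ itself, so it fits into the scale. Once this is checked, the remainder is bookkeeping around standard facts about $K_0$ of rings with enough idempotents and stability of $K$-theory.
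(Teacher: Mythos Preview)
Your approach is correct but organized differently from the paper's, and the difference is instructive. You prove $(3)\Leftrightarrow(4)$ the same way, but then handle $(1)\Leftrightarrow(3)$ and $(2)\Leftrightarrow(4)$ in parallel, each time stabilizing, invoking the scaled classification of Theorem~\ref{AF-equivalences-thm}, and then arguing that the scale of a stable algebra is the full positive cone so that the scaled statement collapses to the unscaled one. This is valid, and your identification of the scale-killing step as the one piece needing care is accurate; it does go through via ${\rm M}_n({\rm M}_\infty(R))\cong {\rm M}_\infty(R)$, though with the paper's unitization-based definition of $\Sigma(R)$ a little unpacking is required. One small wobble: your justification that $R$ and $S$ have countable sets of enough idempotents should come from their being ultramatricial (direct limits of finite direct sums of matrix algebras), not from density in a separable $C^*$-algebra.

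The paper sidesteps the scale-killing computation entirely. It cites the Morita-equivalence form of Elliott's theorem directly for $(1)\Leftrightarrow(3)$, and then, rather than proving $(2)\Leftrightarrow(4)$, it proves $(1)\Leftrightarrow(2)$: Brown--Green--Rieffel gives $A\SME B\iff A\otimes\K\cong B\otimes\K$; since ${\rm M}_\infty(R)$ sits as a dense ultramatricial $*$-subalgebra of $A\otimes\K$ (and likewise for $S$ and $B$), Theorem~\ref{AF-equivalences-thm} applied to the pair $(A\otimes\K,\,{\rm M}_\infty(R))$ gives $A\otimes\K\cong B\otimes\K$ (as $*$-algebras) $\iff {\rm M}_\infty(R)\cong{\rm M}_\infty(S)$ (as rings), and Proposition~\ref{Mor-equiv-iso-mat-prop} finishes. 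The point is that Theorem~\ref{AF-equivalences-thm} already links the $C^*$-isomorphism of the stabilizations directly to the ring isomorphism of the algebraic stabilizations, so no passage through $K_0$ of the stabilized objects---and hence no scale argument---is needed. Your route is more self-contained and symmetric; the paper's is shorter and dodges the technical obstacle you flagged.
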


\begin{proof}  The result follows from the following equivalences.

$(1) \Longleftrightarrow (3)$ is part of Elliott's Theorem for AF $C^*$-algebras \cite[Theorem~12.1.3]{WO}.

$(3) \Longleftrightarrow (4)$ follows from Lemma~\ref{alg-K-iso-op-K}.

$(1) \Longleftrightarrow (2)$ is obtained via the following argument.  We first note that $A \otimes \K = \overline{{\rm M}_{\infty}(A)}$, and since $R$ is a dense $*$-subalgebra of $A$, it follows that ${\rm M}_{\infty} (R)$ is a dense $*$-subalgebra of $A \otimes \K$.  Similarly ${\rm M}_{\infty} (S)$ is a dense $*$-subalgebra of $B \otimes \K$.  Furthermore, since $A$ and $B$ are AF-algebras, each contains a countable approximate identity and the hypotheses of the Brown-Green-Rieffel Theorem \cite[Theorem~1.2]{BGR} are satisfied. This Theorem then gives $A \SME B$ if and only if $A \otimes \K \cong B \otimes \K$ (as $*$-algebras).  In addition, $A \otimes \K \cong B \otimes \K$ (as $*$-algebras) if and only if ${\rm M}_{\infty} (R) \cong {\rm M}_{\infty} (S)$ (as rings) by [(1) $\Longleftrightarrow$ (4)] of Theorem~\ref{AF-equivalences-thm}.  Finally, ${\rm M}_{\infty} (R) \cong {\rm M}_{\infty} (S)$ (as rings) if and only if $R \ME S$ by Proposition~\ref{Mor-equiv-iso-mat-prop}.
\end{proof}

\begin{proposition} \label{ultramatricial-Mor-equiv-prob}
Let $E$ and $F$ be graphs with no cycles.  Then the following are equivalent.
\begin{enumerate}
\item $C^*(E) \SME C^*(F)$, and
\item $L_\C(E) \ME L_\C(F)$.
\end{enumerate}

\noindent In particular,  the Morita Equivalence Conjecture for Graph Algebras holds for the class of graphs with no cycles.

\end{proposition}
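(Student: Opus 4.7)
The plan is to invoke Theorem~\ref{AF-Morita-equivalences-thm} in a direct manner entirely parallel to how Theorem~\ref{AF-equivalences-thm} was applied in the proof of Proposition~\ref{ultramatricial-iso-prob}. The hypotheses of Theorem~\ref{AF-Morita-equivalences-thm} require a pair of AF $C^*$-algebras together with dense ultramatricial $*$-subalgebras, so the first task is to verify that the graph hypothesis produces exactly this setup.

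First, I would observe that since $E$ and $F$ contain no cycles, \cite[Theorem~2.4]{KPR} shows that $C^*(E)$ and $C^*(F)$ are AF $C^*$-algebras. On the algebraic side, the absence of cycles implies that $L_\C(E)$ and $L_\C(F)$ are ultramatricial $\C$-algebras; for row-finite graphs this is in \cite[\S3.3]{W}, and the general countable case follows either by the desingularization technique \cite[Theorem~5.2]{AbrPino3} or directly as in \cite{AR}. Next, Proposition~\ref{embedding-prop} realizes $L_\C(E)$ (respectively $L_\C(F)$) as a dense $*$-subalgebra of $C^*(E)$ (respectively $C^*(F)$) via the canonical embedding $\iota_E$ (respectively $\iota_F$).

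With all four objects in hand, the equivalence of (1) and (2) is immediate from the equivalence of conditions (1) and (2) in Theorem~\ref{AF-Morita-equivalences-thm}, applied with $A = C^*(E)$, $B = C^*(F)$, $R = \iota_E(L_\C(E))$, and $S = \iota_F(L_\C(F))$. The concluding sentence about the Morita Equivalence Conjecture for Graph Algebras then follows by noting that the implication (2) $\Rightarrow$ (1) is exactly the statement of the conjecture restricted to acyclic graphs.

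I do not anticipate any genuine obstacle here: the substantive work (the Brown-Green-Rieffel Theorem, Elliott's classification, Stephenson's Theorem, and the $K_0$-comparison Lemma~\ref{alg-K-iso-op-K}) has already been bundled into Theorem~\ref{AF-Morita-equivalences-thm}, so the proof is essentially a citation-and-assembly argument of a few lines, structurally identical to the proof of Proposition~\ref{ultramatricial-iso-prob}. The only point requiring any care is ensuring that the hypotheses of Theorem~\ref{AF-Morita-equivalences-thm} are satisfied for graphs that may be countably infinite (hence possibly nonunital), but this is already built into the statement of that theorem via the scale-free formulation of conditions (3) and (4).
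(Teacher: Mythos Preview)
Your proposal is correct and follows essentially the same approach as the paper's proof: verify that $C^*(E)$ and $C^*(F)$ are AF via \cite[Theorem~2.4]{KPR}, that $L_\C(E)$ and $L_\C(F)$ are ultramatricial (the paper simply refers back to the proof of Proposition~\ref{ultramatricial-iso-prob} for this, where the same references you cite appear), that the Leavitt path algebras embed densely via Proposition~\ref{embedding-prop}, and then invoke Theorem~\ref{AF-Morita-equivalences-thm}. The paper's proof is slightly terser but structurally identical.
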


\begin{proof}
Since $E$ and $F$ have no cycles, it follows from \cite[Theorem~2.4]{KPR} that $C^*(E)$ and $C^*(F)$ are AF-algebras.  Also, as noted in the proof of Proposition \ref{ultramatricial-iso-prob}, since $E$ and $F$ have no cycles it follows that $L_\C(E)$ and $L_\C(F)$ are ultramatricial algebras.     Proposition~\ref{embedding-prop} implies that $L_\C(E)$ is isomorphic to a dense $*$-subalgebra of $C^*(E)$ and $L_\C(F)$ is isomorphic to a dense $*$-subalgebra of $C^*(F)$.  The result then follows from Theorem~\ref{AF-Morita-equivalences-thm}.
\end{proof}

\subsection{Morita equivalence and classes closed under stabilization}

We now show that if we have a class of graphs for which: (1) the Isomorphism Conjecture for Graph Algebras has an affirmative answer for all graphs in the class, and (2) the class is closed under stabilization, then the Morita Equivalence Conjecture for Graph Algebras has an affirmative answer as well for all graphs in that class.

\begin{theorem} \label{class-iso-closed-stable-then-ME-thm}
Let $\mathcal{C}$ be a collection of graphs with the following two properties:
\begin{enumerate}
\item[(1)] If $E$ and $F$ are graphs in $\mathcal{C}$, then $L_\C(E) \cong L_\C(F)$ (as rings) implies that $C^*(E) \cong C^*(F)$ (as $*$-algebras), and
\item[(2)] If $E$ is a graph in $\mathcal{C}$, then $SE$ is also in $\mathcal{C}$.
\end{enumerate}
Then for any $E$ and $F$ in $\mathcal{C}$ it is the case that $L_\C(E) \ME L_\C(F)$ implies that $C^*(E) \SME C^*(F)$.
\end{theorem}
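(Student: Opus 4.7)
The plan is to use the stabilization operation to convert the Morita equivalence hypothesis into a ring isomorphism hypothesis, invoke the assumption (1) on the stabilized graphs (which remain in $\mathcal{C}$ by (2)), and then pass back to strong Morita equivalence of $C^*$-algebras via the Brown--Green--Rieffel Theorem.

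More precisely, suppose $E, F \in \mathcal{C}$ with $L_\C(E) \ME L_\C(F)$. First I would apply Corollary~\ref{Mor-equiv-LPAs}, which gives ${\rm M}_\infty(L_\C(E)) \cong {\rm M}_\infty(L_\C(F))$ as rings. By Proposition~\ref{suspension-prop}(1), we have $L_\C(SE) \cong {\rm M}_\infty(L_\C(E))$ and $L_\C(SF) \cong {\rm M}_\infty(L_\C(F))$ as $*$-algebras, hence in particular as rings. Composing these isomorphisms yields $L_\C(SE) \cong L_\C(SF)$ as rings.

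Next, by hypothesis~(2), $SE$ and $SF$ both lie in $\mathcal{C}$, so hypothesis~(1) applied to the pair $SE, SF$ gives $C^*(SE) \cong C^*(SF)$ as $*$-algebras. Invoking Proposition~\ref{suspension-prop}(3), we obtain $C^*(SE) \cong C^*(E) \otimes \K$ and $C^*(SF) \cong C^*(F) \otimes \K$ as $*$-algebras, which combine to yield $C^*(E) \otimes \K \cong C^*(F) \otimes \K$ as $*$-algebras.

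The final step is to invoke the Brown--Green--Rieffel Theorem \cite[Theorem~1.2]{BGR}, which asserts that for $\sigma$-unital $C^*$-algebras $A$ and $B$, one has $A \otimes \K \cong B \otimes \K$ as $*$-algebras if and only if $A \SME B$. Since our standing assumption (see Remark~\ref{countable-hypothesis-rem}) is that all graphs are countable, the graph $C^*$-algebras $C^*(E)$ and $C^*(F)$ are separable, and thus $\sigma$-unital, so the hypotheses of Brown--Green--Rieffel are satisfied and we conclude $C^*(E) \SME C^*(F)$. There is no real obstacle here: every step is a direct application of machinery already developed in the paper, with the only subtle point being the verification that the $\sigma$-unital hypothesis of Brown--Green--Rieffel is automatic in our setting due to countability of the graphs.
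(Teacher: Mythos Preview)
Your proof is correct and follows essentially the same route as the paper: pass from Morita equivalence to a ring isomorphism of the stabilized Leavitt path algebras via Corollary~\ref{Mor-equiv-LPAs} and Proposition~\ref{suspension-prop}, apply hypotheses (2) and (1) to obtain $C^*(SE)\cong C^*(SF)$, and then descend to strong Morita equivalence via stable isomorphism. The paper cites \cite[Corollary~3.39]{RW} for the last step rather than Brown--Green--Rieffel, but your observation that separability (hence $\sigma$-unitality) of graph $C^*$-algebras makes BGR applicable is equally valid.
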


\begin{proof}
Let $E$ and $F$ be in $\mathcal{C}$ with $L_\C(E) \ME L_\C(F)$.  Then Proposition~\ref{Mor-equiv-iso-mat-prop} implies that ${\rm M}_{\infty} (L_\C(E)) \cong {\rm M}_{\infty} (L_\C(F))$ (as rings).  It follows from Proposition~\ref{suspension-prop} that $L_\C(SE) \cong L_\C(SF)$ (as rings).  By Condition~(2) on $\mathcal{C}$ we have that $SE$ and $SF$ are in $\mathcal{C}$, and then by Condition~(1) on $\mathcal{C}$ we have that $C^*(SE) \cong C^*(SF)$ (as $*$-algebras).  It then follows from Proposition~\ref{suspension-prop} that $C^*(E) \otimes \K \cong C^*(F) \otimes \K$ (as $*$-algebras).  Since $C^*(E)$ and $C^*(F)$ are stably isomorphic, it follows that $C^*(E) \SME C^*(F)$ \cite[Corollary~3.39]{RW}.
\end{proof}

We note that the only implication used in the above proof that is in fact not a biconditional is the statement that  $\LC (SE) \cong \LC (SF)$ (as rings) implies that $C^*(SE) \cong C^*(SF)$ (as $*$-algebras).  (In particular, the converse of the final implication follows from the Brown-Green-Rieffel Theorem \cite[Theorem~1.2]{BGR}.)

\begin{corollary}
If the Isomorphism Conjecture for Graph Algebras (see \S\ref{Iso-Prob-sec}) is true for all graphs, then the Morita Equivalence Conjecture for Graph Algebras (see the beginning of \S\ref{Mor-Equiv-sec}) holds for all graphs.
\end{corollary}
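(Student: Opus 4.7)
The plan is to invoke Theorem~\ref{class-iso-closed-stable-then-ME-thm} with $\mathcal{C}$ taken to be the class of \emph{all} (countable) graphs, so the corollary becomes an essentially immediate specialization of that theorem. First I would note that Condition~(1) of the theorem is precisely the statement of the Isomorphism Conjecture for Graph Algebras, so it is satisfied by hypothesis on the class $\mathcal{C}$ of all graphs. Second, I would verify Condition~(2): given any graph $E$, its stabilization $SE$ from Definition~\ref{stabilization} is again a countable graph (the head attached to each vertex is a countable chain, and $E^0$ is countable by standing convention), so $SE \in \mathcal{C}$. Thus both hypotheses of Theorem~\ref{class-iso-closed-stable-then-ME-thm} are trivially in force.

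With those hypotheses verified, applying Theorem~\ref{class-iso-closed-stable-then-ME-thm} directly gives that for any graphs $E, F$ with $L_\C(E) \ME L_\C(F)$ one has $C^*(E) \SME C^*(F)$, which is exactly the Morita Equivalence Conjecture for Graph Algebras. There is no genuine obstacle here: the main content has already been packaged into Theorem~\ref{class-iso-closed-stable-then-ME-thm}, whose proof is the real work (combining Proposition~\ref{Mor-equiv-iso-mat-prop}, the computation $L_\C(SE) \cong \mathrm{M}_\infty(L_\C(E))$ and $C^*(SE) \cong C^*(E) \otimes \K$ from Proposition~\ref{suspension-prop}, and the Brown--Green--Rieffel Theorem in the form used in \cite[Corollary~3.39]{RW}). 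The only thing to check in writing out the corollary is that nothing in the statement of the Isomorphism Conjecture restricts the class of graphs it quantifies over, which it does not. Hence the corollary follows in one short paragraph with no additional machinery.
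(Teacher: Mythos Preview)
Your proposal is correct and matches the paper's approach exactly: the corollary is stated without proof in the paper, being an immediate specialization of Theorem~\ref{class-iso-closed-stable-then-ME-thm} with $\mathcal{C}$ taken to be the class of all (countable) graphs, and your verification of the two hypotheses is precisely what is intended.
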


\begin{corollary} \label{simple-graph-have-two-props}
Let $E$ and $F$ be row-finite graphs such that $L_\C(E)$ and $L_\C(F)$ are simple.  If $L_\C(E) \ME L_\C(F)$, then $C^*(E) \SME C^*(F)$.
\end{corollary}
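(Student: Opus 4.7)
The plan is to apply Theorem \ref{class-iso-closed-stable-then-ME-thm} to the class $\mathcal{C}$ consisting of all row-finite graphs $E$ for which the Leavitt path algebra $L_\C(E)$ is simple. With this choice, Condition (1) of that theorem --- namely that the Isomorphism Conjecture holds across $\mathcal{C}$ --- is precisely the content of Theorem \ref{Iso-Conjecture-simple-alg-thm} and therefore requires nothing further. Thus the entire proof reduces to verifying Condition (2): that $SE \in \mathcal{C}$ whenever $E \in \mathcal{C}$.

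First I would observe that $SE$ is row-finite whenever $E$ is, since the stabilization operation only attaches infinite heads at each vertex of $E$, and every newly introduced vertex on such a head emits a single edge, while the vertices of $E$ retain their original emissions inside $SE$. Thus row-finiteness is automatic.

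The more substantive step is to check that simplicity of $L_\C(E)$ implies simplicity of $L_\C(SE)$. By Proposition \ref{suspension-prop} we have $L_\C(SE) \cong {\rm M}_\infty(L_\C(E))$ (as $*$-algebras), so the task becomes to show that ${\rm M}_\infty(R)$ is simple whenever $R$ is a simple ring with enough idempotents. For this I would use Proposition \ref{Mor-equiv-iso-mat-prop}: a bijection $\N \times \N \to \N$ gives ${\rm M}_\infty({\rm M}_\infty(R)) \cong {\rm M}_\infty(R)$ (since finite support is preserved under such a re-indexing), and then that proposition yields $R \ME {\rm M}_\infty(R)$. Because Morita equivalence between rings with enough idempotents induces a lattice isomorphism of two-sided ideals, simplicity transfers from $R$ to ${\rm M}_\infty(R)$. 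Hence $L_\C(SE)$ is simple, confirming $SE \in \mathcal{C}$.

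With both hypotheses of Theorem \ref{class-iso-closed-stable-then-ME-thm} in hand, the theorem delivers exactly the statement of the corollary. I do not anticipate a genuine obstacle here: once the framework of Theorem \ref{class-iso-closed-stable-then-ME-thm} is available, the only piece of real content is the preservation of simplicity under the infinite-matrix construction, and that is formal given Proposition \ref{Mor-equiv-iso-mat-prop} together with the ideal-lattice correspondence for Morita equivalent rings with enough idempotents.
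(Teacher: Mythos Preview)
Your proposal is correct and follows the same overall architecture as the paper: define $\mathcal{C}$ to be the row-finite graphs with simple Leavitt path algebra, invoke Theorem~\ref{Iso-Conjecture-simple-alg-thm} for Condition~(1), verify Condition~(2), and apply Theorem~\ref{class-iso-closed-stable-then-ME-thm}.

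The only real difference is in how you check that simplicity survives stabilization. The paper's primary argument is graph-theoretic: it translates simplicity of $L_\C(E)$ into the conditions that $E$ is cofinal, satisfies Condition~(L), and has every vertex reaching every sink, and then observes directly from the definition of $SE$ that these properties persist, so that $L_\C(SE)$ is simple by the simplicity criterion of \cite[Theorem~3.1]{AbrPino3}. Your argument is purely algebraic: via Proposition~\ref{suspension-prop} you reduce to showing ${\rm M}_\infty(R)$ is simple when $R$ is, and you obtain this from $R \ME {\rm M}_\infty(R)$ (using ${\rm M}_\infty({\rm M}_\infty(R)) \cong {\rm M}_\infty(R)$ together with Proposition~\ref{Mor-equiv-iso-mat-prop}) plus the ideal-lattice correspondence for Morita equivalent rings with enough idempotents. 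In fact the paper records essentially your route in the Remark immediately following the corollary, phrased there as ``straightforward computations with matrices''; your Morita-equivalence justification is a slightly slicker way to package that computation. The graph-theoretic approach has the advantage of being self-contained within the paper's graph framework, while your approach is more transferable (it works for any ring with enough idempotents, not just Leavitt path algebras).
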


\begin{proof}
Let
$$\mathcal{C} := \{ E : \text{$E$ is a row-finite graph and $L_\C(E)$ is simple} \}.$$
  It follows from Theorem~\ref{Iso-Conjecture-simple-alg-thm} that if $E$ and $F$ are in $\mathcal{C}$, then $L_\C(E) \cong L_\C(F)$ (as rings) implies $C^*(E) \cong C^*(F)$ (as $*$-algebras).  Furthermore, if $E$ is a graph in $\mathcal{C}$, then $E$ is row-finite, cofinal, satisfies Condition~(L), and has the property that every vertex can reach every sink.  By considering the definition of the stabilization, we see that $SE$ is also row-finite, cofinal, satisfies Condition~(L), and has the property that every vertex can reach every sink.  Thus $L_\C(SE)$ is simple by \cite[Theorem 3.1]{AbrPino3}, and $SE$ is in $\mathcal{C}$.  The result then follows from Theorem~\ref{class-iso-closed-stable-then-ME-thm}.
\end{proof}

\begin{remark}
Here is an alternate way to see that the class $\mathcal{C}$ of Corollary~\ref{simple-graph-have-two-props} is closed under stabilization: Straightforward computations with matrices show that for any ring $R$ with enough idempotents, one has that $R$ is simple if and only if ${\rm M}_\infty(R)$ is simple.  Thus $\mathcal{C}$ is closed under stabilization by Proposition~\ref{suspension-prop}.
\end{remark}

%%%%%%%%%%%%%%%%%%%%%%%%%%%%%%%%%%%%%%%%%%%%%%%%%%%%%%
\section{Converses to the Isomorphism Conjecture for Graph Algebras} \label{converses-sec}
%%%%%%%%%%%%%%%%%%%%%%%%%%%%%%%%%%%%%%%%%%%%%%%%%%%%%%

In this final section we identify two important classes of graphs for which the converse of the Isomorphism Conjecture for Graph Algebras holds. (We note that there are no known counterexamples to the converse of this conjecture.)  For the first class, we have already seen that the converse holds for graphs with no cycles (see Proposition~\ref{ultramatricial-iso-prob}).     Here is the second.

\begin{theorem}\label{converseisoconjecture}
Let $E$ and $F$ be finite graphs for which
 $C^*(E)$ and $C^*(F)$ are purely infinite simple, and for which ${\rm det}(I - A_E^t)$ and ${\rm det}(I - A_F^t)$ have the same sign (where $A_E$ and $A_F$ denote the vertex matrices of $E$ and $F$, respectively).   If $C^*(E) \cong C^*(F)$ (as $*$-algebras), then $L_\C(E) \cong L_\C(F)$ (as rings; in fact, as $*$-algebras).
\end{theorem}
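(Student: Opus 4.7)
The plan is to combine two deep classification theorems---the Kirchberg-Phillips Classification Theorem on the $C^*$-algebra side and the classification theorem for purely infinite simple unital Leavitt path algebras of finite graphs on the algebra side---transferring $K$-theoretic data between them via the identification $K_0(\LC(E)) \cong K_0^{\textnormal{top}}(C^*(E))$ already used in the proof of Theorem~\ref{K-theory-implications-thm}. Since $E$ and $F$ are finite graphs whose graph $C^*$-algebras are purely infinite simple, each graph must be cofinal, satisfy Condition~(L), and have every vertex connecting to a cycle; by the dichotomy for simple Leavitt path algebras (\cite[Theorem~4.4]{AbrPino3}), this forces $\LC(E)$ and $\LC(F)$ themselves to be purely infinite simple and unital.

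First I would apply the Kirchberg-Phillips Classification Theorem to the given $*$-isomorphism $C^*(E) \cong C^*(F)$ to obtain an isomorphism of pointed topological $K_0$-groups, $(K_0^{\textnormal{top}}(C^*(E)), [1_{C^*(E)}]) \cong (K_0^{\textnormal{top}}(C^*(F)), [1_{C^*(F)}])$. Then I would invoke \cite[Theorem~7.1]{AMP} (the same result used in the proof of Theorem~\ref{K-theory-implications-thm}): the inclusion $\iota_E: \LC(E) \hookrightarrow C^*(E)$ induces a unit-preserving group isomorphism $K_0(\LC(E)) \cong K_0^{\textnormal{top}}(C^*(E))$, and similarly for $F$. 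Composing the three isomorphisms yields $(K_0(\LC(E)), [1_{\LC(E)}]) \cong (K_0(\LC(F)), [1_{\LC(F)}])$.

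Combined with the hypothesis that $\det(I - A_E^t)$ and $\det(I - A_F^t)$ have the same sign, I would then invoke the classification theorem for purely infinite simple unital Leavitt path algebras of finite graphs---which asserts that two such algebras are isomorphic (and in fact isomorphic as $*$-algebras, via a construction built from explicit graph-theoretic moves that preserves the canonical involution) precisely when their pointed $K_0$-groups agree and the signs of the determinants coincide---to conclude $\LC(E) \cong \LC(F)$ as $*$-algebras, and therefore as rings. The main obstacle---and the deepest input of the argument---is this Leavitt path algebra classification theorem itself: unlike the $C^*$-algebraic side, where pointed $K_0$ alone is a complete invariant by Kirchberg-Phillips, on the algebraic side pointed $K_0$ does not determine the Leavitt path algebra up to isomorphism, and the determinant-sign hypothesis, which arises from flow/shift-equivalence considerations, is precisely the additional datum required to promote the $K$-theoretic equivalence to an honest ring (and in fact $*$-algebra) isomorphism.
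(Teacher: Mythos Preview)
Your approach is essentially the same as the paper's: pass from the $*$-isomorphism $C^*(E)\cong C^*(F)$ to an isomorphism of pointed $K_0$-groups, transfer this to the Leavitt side via \cite[Theorem~7.1]{AMP}, and then invoke the Abrams--Louly--Pardo--Smith classification (\cite[Corollary~2.7]{ALPS}) together with the determinant-sign hypothesis to conclude $\LC(E)\cong\LC(F)$ as $*$-algebras.

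One small correction: you do not need the Kirchberg--Phillips Classification Theorem for the first step. Kirchberg--Phillips runs in the opposite direction (from $K$-theoretic data to a $*$-isomorphism); here you already have the $*$-isomorphism $\phi:C^*(E)\to C^*(F)$, and functoriality of $K_0$ alone gives the group isomorphism $K_0(\phi)$ sending $[1_{C^*(E)}]$ to $[1_{C^*(F)}]$. This is exactly how the paper proceeds. Also, a minor wording point: the classification result you cite gives the implication ``pointed $K_0$ agrees and determinant signs agree $\Rightarrow$ isomorphic,'' which is what you need; whether the determinant sign is itself a ring-isomorphism invariant (so that the converse holds) remains open, so ``precisely when'' overstates what is known.
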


\begin{proof}
For any finite graph $E$, $C^*(E)$ is purely infinite simple (as a C$^*$-algebra) if and only if $L_\C(E)$ is purely infinite simple (as a ring), since, as indicated previously,  each condition is equivalent to $E$ being cofinal, satisfying Condition (L), and containing at least one cycle.  (See e.g. \cite[Theorem 11]{AbrPino2} and \cite[Proposition~5.1 and Proposition~5.3]{BPRS}.)  If $\phi: C^*(E) \rightarrow C^*(F)$ is a $*$-algebra isomorphism, then  $K_0(\phi) :
K_0(C^*(E)) \to K_0(C^*(F))$ is an isomorphism of groups for which  $K_0(\phi) ( [1_{C^*(E)}] ) =
[1_{C^*(F)}]$.  It then follows from  \cite[Theorem~7.1]{AMP} that
there is an isomorphism $\psi : K_0(\LC(E)) \to
K_0(\LC(F))$ with $\psi ( [1_{\LC(E)}] ) = [1_{\LC(F)}]$.
   Now  \cite[Corollary 2.7]{ALPS} applies to yield the result.

\end{proof}

 We note that \cite[Theorem 2.5]{ALPS}, and the resulting \cite[Corollary 2.7]{ALPS} used in the previous proof, follow from deep results in symbolic dynamics, see e.g. \cite{Franks} and \cite{Huang}.
 
 We close the article by presenting examples of specific classes of graphs to which Theorem \ref{converseisoconjecture} applies.

\begin{example}[Matrix rings over Leavitt and Cuntz algebras]   For positive integers $n$ and $k$ with $n\geq 2$, let  $R^k_n$ be the graph

$$
\xymatrix{v_{k-1}\ar[r] & \ldots \ar[r] & v_{2} \ar[r] & v_{1} \ar[r] &  v  \ar@(lu,u)^{e_1} \ar@(u,ur)^{e_2}  \ar@(ru,r)^{e_3} \ar@(r,dr)^{e_4}  \ar@(dr,d)^{e_5} \ar@(l,ul)^{e_n}  \ar@{-}[]^{\begin{matrix} \\  \\ \ddots \end{matrix}}  \\
}
$$
(So $R^k_n$ is precisely the graph $M_kR_n$, where $R_n$ is the rose with $n$ petals graph; i.e., the graph with one vertex and $n$ edges.)

\end{example}

\begin{corollary}\label{LeavittimpliesCuntz}
Let $E$ and $F$ be graphs in $\{ R_n^k : n,k \in \N, n\geq 2 \}$.  If $C^*(E) \cong C^*(F)$ (as $*$-algebras), then $\LC(E) \cong \LC(F)$ (as rings; in fact, as $*$-algebras).
\end{corollary}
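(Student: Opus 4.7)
The plan is to verify the three hypotheses of Theorem~\ref{converseisoconjecture} for the graphs in the family $\{R_n^k : n,k \in \N, n \geq 2\}$ and then invoke that theorem directly.

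First I would observe that each $R_n^k$ is manifestly a finite graph (it has $k$ vertices and $n+k-1$ edges), so the finiteness hypothesis is immediate. Next I would verify that $C^*(R_n^k)$ is purely infinite and simple for every such graph. The quickest route is to note that $R_n^k$ is exactly $M_k R_n$, so Proposition~\ref{Mn-graphs-prop} gives $C^*(R_n^k) \cong {\rm M}_k(C^*(R_n)) \cong {\rm M}_k(\mathcal{O}_n)$, which is purely infinite simple because $\mathcal{O}_n$ is. (Alternatively, one can check directly that $R_n^k$ is cofinal, satisfies Condition~(L) since the $n \geq 2$ loops at $v$ provide mutual exits, and contains a cycle, and then cite \cite[Theorem~5.1 and Remark~5.6]{BPRS}.)

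The main computational step is the determinant condition. Ordering the vertices as $v_{k-1}, v_{k-2}, \ldots, v_1, v$, the vertex matrix $A_{R_n^k}$ has a $1$ in each position $(i, i+1)$ for $1 \le i \le k-1$ (recording the edges of the head) and an $n$ in position $(k,k)$ (recording the $n$ loops at $v$), with all other entries $0$. Hence $I - A_{R_n^k}^{\,t}$ is lower triangular with diagonal entries $1, 1, \ldots, 1, 1-n$, so
\[
\det\bigl(I - A_{R_n^k}^{\,t}\bigr) = 1 - n.
\]
Since $n \geq 2$, this quantity is strictly negative for every $n$ and every $k$. In particular, for any two graphs $E, F$ in the given family, $\det(I - A_E^{\,t})$ and $\det(I - A_F^{\,t})$ have the same (negative) sign.

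With all three hypotheses of Theorem~\ref{converseisoconjecture} verified, an isomorphism $C^*(E) \cong C^*(F)$ as $*$-algebras yields $L_\C(E) \cong L_\C(F)$ as rings (indeed, as $*$-algebras), completing the proof. No step here looks delicate: the only real ``content'' is the determinant calculation, which is a one-line triangular computation, and the substantive analytic and $K$-theoretic work has already been packaged into Theorem~\ref{converseisoconjecture} and into the cited results from \cite{ALPS}.
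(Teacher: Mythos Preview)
Your proposal is correct and follows essentially the same route as the paper's own proof: verify that each $R_n^k$ is finite with $C^*(R_n^k)$ purely infinite simple, compute $\det(I - A_{R_n^k}^{\,t}) = 1-n = -(n-1) < 0$, and invoke Theorem~\ref{converseisoconjecture}. You simply supply more detail (the explicit triangular determinant computation and the two alternative justifications of pure infinite simplicity) where the paper says ``well known'' and ``trivial to show''.
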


\begin{proof}
It is well known that $C^*(R_n^k)$ is purely infinite simple.  It is trivial to show  that the graph $E = R_n^k$ has ${\det}(I-A_E^t) = -(n-1)<0$.  Now apply Theorem \ref{converseisoconjecture}.
\end{proof}

\begin{corollary}\label{converseforsmallgraphs}
Let $E$ and $F$ be graphs for which:
 \begin{itemize}
\item $E$ is cofinal, satisfies Condition (L), and has at least one cycle
\item $E$ has no parallel edges
\item $|E^0| \leq 3$.
\end{itemize}
If $C^*(E) \cong C^*(F)$ (as $*$-algebras), then $\LC(E) \cong \LC(F)$ (as rings; in fact, as $*$-algebras).

\end{corollary}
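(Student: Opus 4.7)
The plan is to apply Theorem~\ref{converseisoconjecture} to the pair $(E,F)$. Three hypotheses need to be verified: both graphs are finite, both $C^*$-algebras are purely infinite simple, and $\det(I-A_E^t)$, $\det(I-A_F^t)$ have the same sign. Finiteness of $E$ is immediate from $|E^0|\le 3$ and the absence of parallel edges, which together give $|E^1|\le 9$. The hypotheses on $E$ also make $C^*(E)$ purely infinite simple, by \cite[Proposition~5.1 and Proposition~5.3]{BPRS}; hence via the $*$-isomorphism $C^*(F)$ is purely infinite simple and unital, so $F^0$ is finite, and (after a routine row-finite reduction if necessary) $F$ may be taken finite.

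The substance is the sign-matching condition, and the main point is that the hypotheses ``no parallel edges'' and ``$|E^0|\le 3$'' are available only for $E$, not $F$. First I would show that these hypotheses on $E$ force $\det(I-A_E^t)\le 0$. Since $A_E$ is a $\{0,1\}$-matrix of size at most $3$, $\det(I-A_E^t)$ expands as a short polynomial in the entries, organized naturally by the number of self-loops. In each of the four cases (three loops, two loops, one loop, or no loops), the only potentially positive contribution is $\prod_i(1-a_{ii})$, and cofinality forces enough off-diagonal entries to equal $1$ that this is outweighed by the negative cross-terms; a finite enumeration of the subcases surviving Condition~(L) gives $\det(I-A_E^t)\le 0$ in all cases. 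Moreover $\det(I-A_E^t)=0$ precisely when $\ker(I-A_E^t)\ne 0$, equivalently when $K_1(C^*(E))\ne 0$ by \cite[Theorem~7.16]{Rae}.

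To transfer the sign to $F$, I would split on the vanishing of the determinant. If $\det(I-A_E^t)=0$, then $K_1(C^*(E))\ne 0$; since $K_1$ is a $*$-isomorphism invariant, $K_1(C^*(F))\ne 0$ as well, so $\det(I-A_F^t)=0$ and the two signs trivially match. If $\det(I-A_E^t)<0$, invoke Franks' theorem (extended by Huang): for essential nonnegative integer matrices, the pair consisting of the Bowen-Franks group $\Z^n/(I-A^t)\Z^n$ and $\mathrm{sgn}\det(I-A^t)$ is a complete invariant of flow equivalence of the underlying shift of finite type. By R\o{}rdam's theorem, flow equivalence of $X_E$ and $X_F$ is equivalent to stable isomorphism of $C^*(E)$ and $C^*(F)$, which is implied by $C^*(E)\cong C^*(F)$; irreducibility of both shifts follows from cofinality, so the theorem forces $\mathrm{sgn}\det(I-A_F^t)=\mathrm{sgn}\det(I-A_E^t)<0$. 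In either case the three hypotheses of Theorem~\ref{converseisoconjecture} are satisfied, yielding $\LC(E)\cong\LC(F)$ as rings (and as $*$-algebras).

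The main obstacle is this sign-matching step for an arbitrary $F$ satisfying only $C^*(F)\cong C^*(E)$: it rests on the symbolic-dynamics classification of Franks-Huang together with its $C^*$-algebraic translation due to R\o{}rdam, and on the secondary technicality that $F$ is not a priori constrained to be row-finite. A cleaner alternative would be a direct $K$-theoretic argument showing that $\mathrm{sgn}\det(I-A^t)$ is determined by $(K_0(C^*(E)),[1],K_1(C^*(E)))$ whenever $C^*(E)$ is purely infinite simple, but this is not immediate for general finite graphs.
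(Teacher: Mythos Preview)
You have misread the hypotheses. Despite the awkward wording, the three bulleted conditions are meant to apply to \emph{both} $E$ and $F$; the paper's proof makes this explicit when it says the conditions ``yield that $C^*(E)$ and $C^*(F)$ are purely infinite simple'' and then checks the determinant only for the finitely many graphs meeting those conditions. Under the intended reading the argument is much shorter than yours: one cites \cite{AAP2} for the enumeration (there are exactly $3$ such graphs on two vertices and $34$ on three, none on one), verifies by direct computation that each of these $37$ graphs has $\det(I-A_E^t)<0$, and then applies Theorem~\ref{converseisoconjecture}. No transfer of information from $E$ to $F$ is needed because $F$ already lies in the same finite list.

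Under your reading (conditions on $E$ only), your proposed sign-matching step is actually wrong, not merely heavy. R\o{}rdam's theorem does \emph{not} assert that stable isomorphism of Cuntz--Krieger algebras is equivalent to flow equivalence of the underlying shifts; it asserts that stable isomorphism is governed by the Bowen--Franks group (equivalently $K_0$) alone. Franks' theorem, on the other hand, says flow equivalence is governed by the Bowen--Franks group \emph{together with} the sign of $\det(I-A^t)$. The sign is precisely the additional flow-equivalence invariant that the $C^*$-algebra does not see: there exist irreducible matrices with isomorphic Bowen--Franks groups (hence stably isomorphic Cuntz--Krieger algebras) but opposite determinant signs (hence non-flow-equivalent shifts). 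If your argument worked, the sign hypothesis in Theorem~\ref{converseisoconjecture} would be vacuous, which it is not. So the corollary as you interpret it is a genuinely stronger statement than what is being claimed, and it is not accessible by the route you propose.
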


\begin{proof}
It is shown in \cite{AAP2}  that there are (up to isomorphism) three graphs of the given type having $|E^0| = 2$, and thirty-four graphs of the given type having $|E^0|=3$. (There are no such graphs having just one vertex.)  The first indicated set of conditions yield that $C^*(E)$ and $C^*(F)$ are purely infinite simple.   For each of these thirty-seven graphs, one checks that ${\rm det}(I-A_E^t) <0$.   Now apply  Theorem \ref{converseisoconjecture}.
\end{proof}

\end{document}